\newtheorem{theorem}{Theorem}[section]
\newtheorem{proposition}[theorem]{Proposition}
\newtheorem{lemma}[theorem]{Lemma}
\newtheorem{corollary}[theorem]{Corollary}
\theoremstyle{definition}
\newtheorem{definition}{Definition}[section]
\newcommand*\circled[1]{\tikz[baseline=(char.base)]{\node[shape=circle,draw,inner sep=.9pt] (char) {#1};}}
\newcommand{\vge}{\rotatebox[origin=c]{-90}{$\ge$}}
\newcommand{\vle}{\rotatebox[origin=c]{-90}{$\le$}}
\newcommand{\vdequal}{\rotatebox[origin=c]{-90}{$=:$}}
\title{Weighted Quasi Interpolant Spline Approximations: \\ Properties and Applications}
\author[1,2,3]{Andrea Raffo}
\author[3]{Silvia Biasotti}
\affil[1]{Department of Mathematics and Cybernetics, SINTEF, Oslo, Norway.}
\affil[2]{Department of Mathematics, University of Oslo, Oslo, Norway.}
\affil[3]{Istituto di Matematica Applicata e Tecnologie Informatiche ``E. Magenes'' CNR, Genova, Italy.}
\date{}                     
\begin{document}
\maketitle
\begin{abstract}
Continuous representations are fundamental for modeling sampled data and performing computations and numerical simulations directly on the model or its elements. To effectively and efficiently address the approximation of point clouds we propose the Weighted Quasi Interpolant Spline Approximation method (wQISA). We provide global and local bounds of the method and discuss how it still preserves the shape properties of the classical quasi-interpolation scheme. This approach is particularly useful when the data noise can be represented as a probabilistic distribution: from the point of view of nonparametric regression, the wQISA estimator is robust to random perturbations, such as noise and outliers. Finally, we show the effectiveness of the method with several numerical simulations on real data, including curve fitting on images, surface approximation and simulation of rainfall precipitations.\\
\textbf{Keywords}: Spline methods, quasi-interpolation, non-parametric regression, point clouds, raw data, noise.  
\end{abstract}

\section{Introduction}

Modelling sampled data with a continuous representation is essential in many applications such as, for instance, image resampling \cite{splineimages}, geometric modelling \cite{Farin:1993}, isogeometric analysis (IgA) \cite{hughes} and the numerical solution of PDE boundary problems \cite{Boffi2013}.

Spline interpolation is largely adopted to approximate data from a function or a physical object because of the simplicity of its construction, its ease and accuracy of evaluation, and its capacity to approximate complex shapes through mathematical element fitting and interactive design \cite{Schumaker2007}.
It is often preferred to polynomial interpolation because it yields visually effective results even when using low degree polynomials, while avoiding the Runge's phenomenon for higher degrees \cite{Gregory:1986}. B-splines represent a popular way for dealing with spline interpolation and are nowadays the most powerful tool in CAGD \cite{Buffa2012}. Several generalizations to non-polynomial splines are possible, such as generalized splines \cite{BRACCO2016}, which admit also trigonometric or exponential bases, or non-uniform rational B-splines (NURBS) \cite{nurbs}. The B-spline extension to higher dimensions consists of multivariate spline functions based on a tensor product approach. Unfortunately, classical tensor product splines lack local refinement, which is often fundamental in those applications dealing with large amounts of data. For this reason several alternative structures that support local refinement have been introduced in the last decades; for instance, in the context of a tensor-product paradigm, T-splines \cite{Sederberg2003}, hierarchical B-splines \cite{Forsey1988}, locally refined (LR) B-splines \cite{Dokken2013} and (truncated) Hierarchical B-splines (THB) \cite{Giannelli2016}.

When dealing with real data -- for instance, acquired by laser scanners, photogrammetry and diagnostic devices -- there are many source of uncertainty, such as resolution, precision, occlusions and reflections. Furthermore, digital models often undergo post-processing stages after acquisition, and these may introduce additional geometric and/or numerical artefacts \cite{Cao:2017}.
Most of the existing inverse approximation techniques are executed as a deterministic problem and the parameters involved in the model are treated as unambiguous values. Despite the recent introduction of uncertainty-based inverse analysis tools such as evidence-theory, fuzzy and interval uncertainties \cite{long2019}, at the best of our knowledge, only few  modelling approaches identify uncertainty theories as a good solution for explicitly modelling data uncertainty, adopting, for instance, interproximation \cite{CHENG1991} or fuzzy numbers \cite{ANILE2000}. Unfortunately, these efforts were quite isolated and their computational complexity prevented their massive adoption.

In this scenario, we aim at preserving the use of B-spline bases because of their simplicity, their approximation capability and accuracy. To effectively and efficiently approximate raw data and point clouds possibly affected by noise and outliers, we propose the adoption of a novel quasi-interpolation scheme. Quasi-interpolation is a well known technique \cite{deboor73,Sablonniere05} that does not require to solve any linear system, unlike the traditional spline approaches, and therefore it allows to define more efficient algorithms. Whilst there are works on the use of quasi-interpolant methods for function approximation  \cite{deboor73,Buhmann:1993,Buhmann:2015,JIANG2011,Patrizi2020,Speleers2016}, to the best of our knowledge, less efforts have been devoted to define quasi-interpolant schemes for point clouds \cite{Amir2018,Beatson:1992,BRACCO2017,Gao2018}.

As working assumptions, we assume the point cloud to be embedded in an Euclidean space $\mathbb{R}^{d+1}$ and locally represented as a height field $y=f(x_1,\dots,x_d) $. We obtain a method which is not only robust, but also has a reduced computational complexity thanks to the adopted quasi-interpolation scheme. The method properties, presented in detail for the uni- and bivariate cases for simplicity of notation, can be easily extended to consider data of arbitrary dimension. We also discuss how the shape properties of monotonicity and convexity derive from classical spline theory. Since we aim at addressing data affected by noise, we provide a probabilistic interpretation of the method. We illustrate its properties over a number of examples, ranging from curve fitting to the approximation of scalar fields defined on surfaces.  In summary, the main contributions of this work are:  
\begin{itemize}
    \item The introduction of a novel quasi-interpolation scheme to approximate point clouds, possibly affected by noise and outliers, together with a theoretical study of its numerical properties (Section \ref{numerical_formulation}).
    \item The interpretation of our approach in terms of the nonparametric regression scheme, together with the theoretical study of bias and variance of the wQISA estimator (Section \ref{sec:prob_interp}).
    \item The validation of the method on real data from different applications, including curve fitting, surface reconstruction and rainfall approximation and forecasting (Section \ref{sec:simulations}).
\end{itemize}
Finally, concluding remarks are provided in Section \ref{sec:conclusion}.

\section{Weighted quasi-interpolant spline approximation for point clouds\label{numerical_formulation}}

In this Section we first summarise some basic notation and definitions on B-splines. We then formally introduce the weighted quasi-interpolant spline approximations, provide their global and local bounds and discuss in what sense they preserve the shape properties.

\subsection{Basic concepts on spline spaces}
\label{TPBSplines}
From B-splines theory, it is well known that a non-decreasing sequence $\textbf{t}=[t_1,\dots,t_{n+p+1}]$, which is commonly referred to as \emph{global knot vector}, generates $n$ B-splines of degree $p$ over $\textbf{t}$. In practice, the construction of each of these B-splines requires only a subsequence of $p+2$ consecutive knots, collected in a \textit{local knot vector}.

\begin{definition}[Univariate B-spline]
Let $\mathbf{t}:=[t_1,\dots,t_{p+2}]$ be a (local) knot vector. A \emph{B-spline} $B[\mathbf{t}]:\mathbb{R}\to\mathbb{R}$ of degree $p$ is the function recursively defined by
\begin{equation}
B[\mathbf{t}](x):=\dfrac{x-t_1}{t_{p+1}-t_1}B[t_1,\dots,t_{p+1}](x)+\dfrac{t_{p+2}-x}{t_{p+2}-t_2}B[t_2,\dots,t_{p+2}](x),
\label{eqn:UnivariateBspline}
\end{equation}
where 
\[
B[t_i,t_{i+1}](x):=
\begin{cases}
1, & \text{if $x\in [t_i,t_{i+1})$} \\
0, &\text{elsewhere}
\end{cases}
, \quad  i=1,\dots,p+1.
\]
Here, the convention is assumed that ``$0/0=0$''. 
\end{definition}

By assuming $t_1<t_{p+2}$, it follows that $B[\mathbf{t}]$ is a piecewise polynomial of degree $p$. The continuity at each unique knot is $p-m$, where $m$ is the number of times the knot is repeated. $B[\mathbf{t}]$ is smooth in each open subinterval $(t_i,t_{i+1})$, where $i=1,\dots,p+1$, and is non-negative over $\mathbb{R}$. The \emph{support} of $B[\mathbf{t}]$, i.e., the closure of the subset of the domain where $B[\mathbf{t}]$ is non-zero, is the compact interval $\text{supp}(B[\mathbf{t}])=[t_1,t_{p+2}]$. 
\begin{definition}[Univariate spline space]
Given a global knot vector $\mathbf{t}=[t_1,\dots,t_{n+p+1}]$, the \emph{spline space} $\mathbb{S}_{p,\mathbf{t}}$ is the linear space defined by
\[
\mathbb{S}_{p,\mathbf{t}}:=\text{span}\left\{B[\mathbf{t}^{(1)}],\dots,B[\mathbf{t}^{(n)}]\right\},
\]
where $\mathbf{t}^{(i)}:=[t_i,\ldots,t_{i+p+1}]$ for any $i=1,\dots,n$. An element $f\in\mathbb{S}_{p,\mathbf{t}}$ is called a \emph{spline function}, or just a \emph{spline}, of degree $p$ with knots $\mathbf{t}$.
\end{definition}
By assuming that no knot occurs more than $p+1$ times, it follows that $\left\{B[\mathbf{t}^{(i)}]\right\}_{i=1}^n$ is a basis for $\mathbb{S}_{p,\mathbf{t}}$. A B-spline basis forms a partition of unity over $[t_1,t_{n+p+1}]$. We can refine a spline curve $f=\sum_{i=1}^nb_iB[\mathbf{t}^{(i)}]$ by inserting new knots in $\mathbf{t}$ and then computing the coefficients of $f$ in the augmented spline space. An efficient way to perform this process is the \emph{Oslo algorithm} \cite{Cohen1980}. 

Lastly, we specify the type of knot vectors we will consider in the next sections, as they allow to define B-spline bases that interpolate the boundaries. 

\begin{definition}
\label{definition:regularknotvector}
A knot vector $\mathbf{t}=[t_1,\ldots,t_{n+p+1}]$ is said to be $(p+1)$\emph{-regular} if 
\begin{enumerate}[1.]
\item $n\ge p+1$,
\item $t_1=t_{p+1}$ and $t_{n+1}=t_{n+p+1}$,
\item $t_j<t_{j+p+1}$ for $j=1,\ldots,n$.
\end{enumerate}
\end{definition}

\begin{definition}[Tensor product B-spline]
A \emph{tensor product B-spline} of multi-degree $\mathbf{p}:=(p_1,\ldots,p_d)\in\mathbb{N}^d$ is a separable function $B:\mathbb{R}^d\to \mathbb{R}$ defined as
\begin{equation}
\label{eqn:DefTensorProductBSpline}
B[\mathbf{t}_1,\ldots,\mathbf{t}_d](\mathbf{x}):=\prod_{k=1}^dB[\mathbf{t}_k](x_k),
\end{equation}
where $\mathbf{x}=(x_1,\ldots,x_d)$ and $\mathbf{t}_k=[t_{k,1},\ldots,t_{k,p_k+2}]\in\mathbb{R}^{p_k+2}$ is the local knot vector along $x_k$, for any $k=1,\ldots,d$.
\end{definition}
By assuming that $t_{k,1}<t_{k,p_k+2}$ for any $k=1,\ldots,d$, it follows that $B[\mathbf{t}_1,\ldots,\mathbf{t}_d]$ is a piecewise polynomial of multi-degree $\mathbf{p}$. 

\begin{definition}[Tensor product spline space]
A \emph{tensor product spline space} $\mathbb{S}_{\mathbf{p},[\mathbf{t}_1,\ldots,\mathbf{t}_d]}$ is the linear space defined by
\[
\mathbb{S}_{\mathbf{p},[\mathbf{t}_1,\ldots,\mathbf{t}_d]}:=
\bigotimes_{k=1}^{d}\mathbb{S}_{p_k,\mathbf{t}_k}=
\text{span}\left\{\prod_{k=1}^dB[\mathbf{t}_k^{(i_k)}] \text{ s.t. }i_k=1,\ldots,n_k\right\},
\]
where $\mathbf{t}_k\in\mathbb{R}^{n_k+p_k+1}$ is a global knot vector for any $k=1,\ldots,d$. An element $f\in\mathbb{S}_{\mathbf{p},[\mathbf{t}_1,\ldots,\mathbf{t}_d]}$ is called a \emph{tensor product spline} function, or just a \emph{spline}, of multi-degree $\mathbf{p}$ with knot vectors $\mathbf{t}_1,\ldots,\mathbf{t}_d$.
\end{definition}
The tensor product spline representation inherits all the properties (local support, non-negativity, local smoothness, partition of unity) of the univariate case. We refer the reader to \cite{Schumaker2007} for a more exhaustive introduction to B-splines.

\subsection{Weighted Quasi Interpolation Spline Approximation}
We introduce our method for the general case of a point cloud $\mathcal{P}\subset\mathbb{R}^{d+1}$. Again, we assume that the point cloud can be locally represented by means of a function 
\[
y=f(x_1,\dots,x_d).
\]

\begin{definition}
\label{definition:wQISA}
Let $\mathcal{P}\subset\mathbb{R}^{d+1}$ be a point cloud and $\mathbf{p}\in\mathbb{N}^d$ a (multi)-degree with all nonzero components. Let $\mathbf{t}_k\in\mathbb{R}^{n_k+p_k+1}$ be a $(p_k+1)$-regular knot vector with boundary knots $t_{p_k}=a_k$ and $t_{n_k+1}=b_k$, for $k=1,\ldots,d$. The \emph{Weighted Quasi Interpolant Spline Approximation}  (wQISA) of degree $\mathbf{p}$ to the point cloud $\mathcal{P}$ over the knot vectors $\mathbf{t}_k$ is defined by
\begin{equation}
\label{equation:dkNNVDSA}
f_w:=\sum_{i_1=1}^{n_1}\ldots\sum_{i_d=1}^{n_d}\hat{y}_w\left(\xi_1^{(i_1)},\ldots,\xi_d^{(i_d)}\right)\cdot B[\mathbf{t}_{1}^{(i_1)},\ldots,\mathbf{t}_{d}^{(i_d)}],
\end{equation}
where $\xi_k^{(i_k)}:=(t_{k,i_k+1}+\ldots+t_{k,i_k+p_k})/p_k$ are the \emph{knot averages} and 
\begin{equation}
\label{equation:control_points_estimator}
    \hat{y}_w(\mathbf{u}):=\dfrac{\sum\limits_{(x_1,\ldots,x_d,y)\in\mathcal{P}}y\cdot w_{\mathbf{u}}(x_1,\ldots,x_d)}{\sum\limits_{(x_1,\ldots,x_d,y)\in\mathcal{P}}w_{\mathbf{u}}(x_1,\ldots,x_d)}
\end{equation}
are the \emph{control points estimators} of \emph{weight functions} $w_{\mathbf{u}}:\mathbb{R}^d\to[0,+\infty)$.
\end{definition}

The function $w_{\mathbf{u}}:\mathbb{R}^d\to[0,+\infty)$ of Definition \ref{definition:wQISA} defines a \emph{window} around each point $\mathbf{u}\in\mathbb{R}^d$ and is also called a \emph{Parzen window}. An example is the weight function: 
\begin{equation}
w_{\mathbf{u}}(\mathbf{x}):=
\begin{cases}
1/k, 	& \text{ if } \mathbf{x}\in N_k(\mathbf{u}) \\
0,	& \text{ otherwise }
\end{cases},
\label{equation:kNN_weight}
\end{equation}
where $k\in\mathbb{N}^\ast$ and $N_k(\mathbf{u})$ denotes the neighborhood of $\mathbf{u}$ defined by the $k$ closest points of the point cloud. In this case, $\hat{y}_w$ defines the $k$-nearest neighbor ($k$-NN) regressor (see figure \ref{figure:kNN_weight}). Commonly, the function $w_{\mathbf{u}}$ depends on a distance, for examples:
\begin{subequations}
\begin{align}
	& w_{\mathbf{u}}(\mathbf{x})=\,\,\,\mathbbm{1}_{||\mathbf{x}-\mathbf{u}||_2\le r}					& (\text{Characteristic})		\label{eq:char_weight} 	\\
	& w_{\mathbf{u}}(\mathbf{x})=e^{-||\mathbf{x}-\mathbf{u}||_2/2\sigma^2} 						& (\text{Gaussian})			\label{eq:gauss_weight} 	\\
	& w_{\mathbf{u}}(\mathbf{x})=e^{-||\mathbf{x}-\mathbf{u}||_2/\sqrt{2}\sigma}					& (\text{Exponential})			\label{eq:exp_weight}
\end{align}
\end{subequations}

\begin{figure}[htp]
\centering
\includegraphics[scale=0.33]{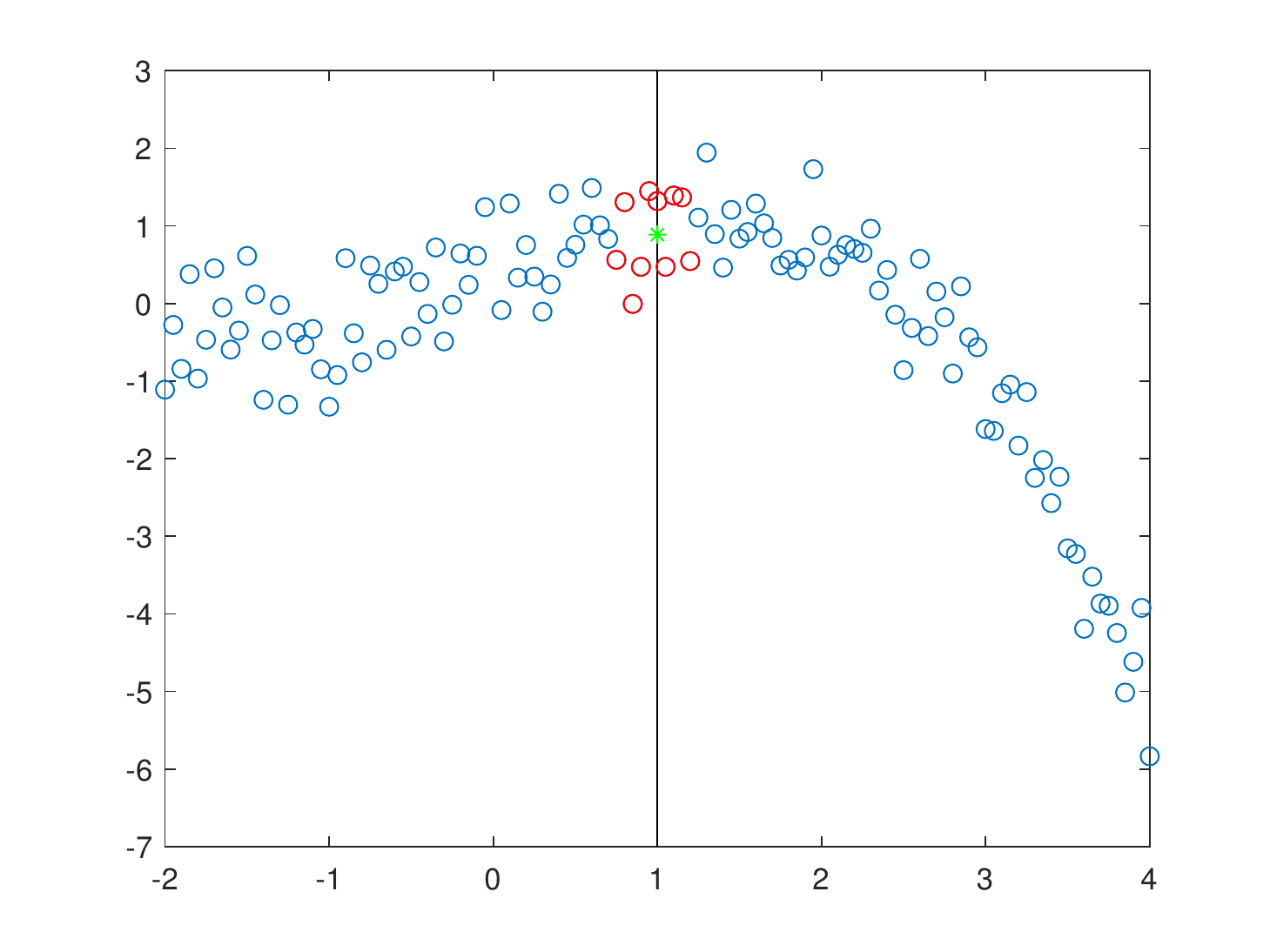}
\caption{Parzen windows and control points estimators. Given a 2D point cloud (in blue), we compute $\hat{y}_w$ at $u=1$ (in green) by using the $10$ nearest points (in red).}
\label{figure:kNN_weight}
\end{figure}

Note that: 
\begin{itemize}
    \item $w_{\mathbf{u}}$ depends on the point $\mathbf{u}\in\mathbb{R}^d$ of interest, and can thus be adapted to local information (e.g., variable level and/or nature of noise).
    \item The quality of an approximation strongly depends on the spline space and the weight functions that are chosen in Definition \ref{definition:wQISA}. As shown in Figure \ref{figure:wQISA_defn}, a given spline space and weight function is not always able to capture the relevant trends of a point set.
    \begin{figure}[!ht]
    \begin{center}
    \begin{tabular}{ccc}
    \fbox{
    \includegraphics[scale=0.25, trim={1cm 0cm 1cm 0cm}, clip]{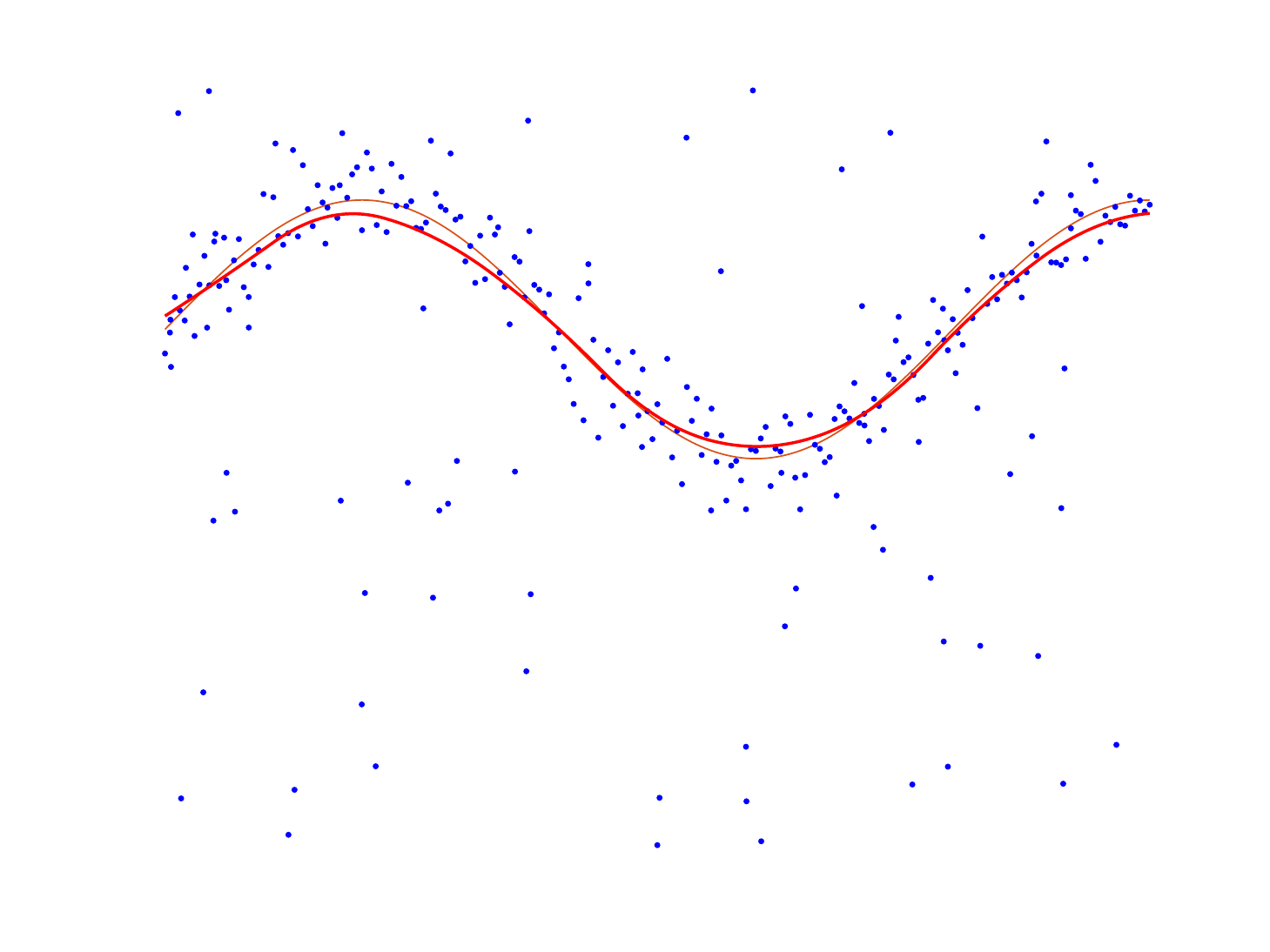}
    }
    &
    \fbox{
    \includegraphics[scale=0.25, trim={1cm 0cm 1cm 0cm}, clip]{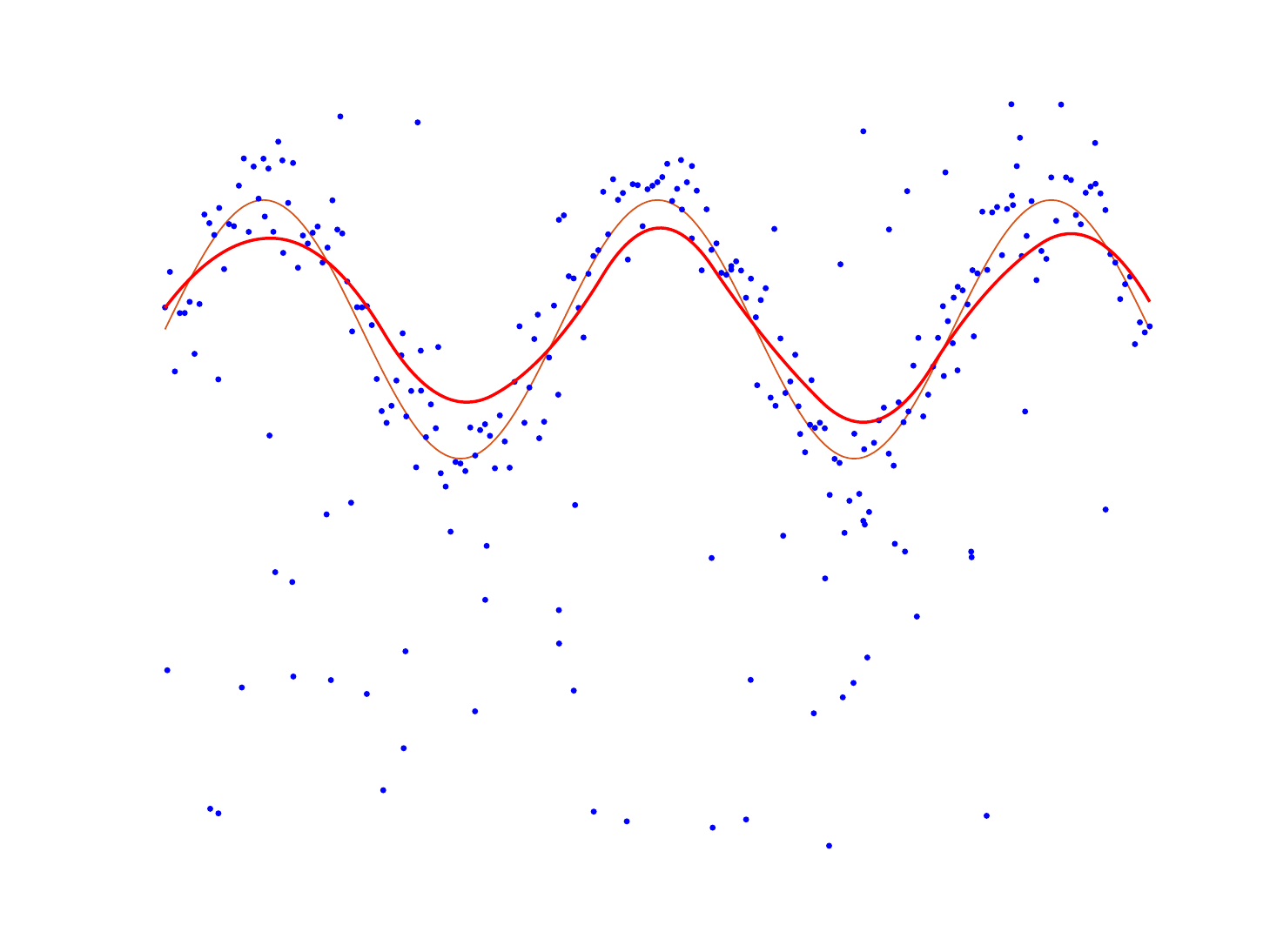}
    }
    &
    \fbox{
    \includegraphics[scale=0.25, trim={1cm 0cm 1cm 0cm}, clip]{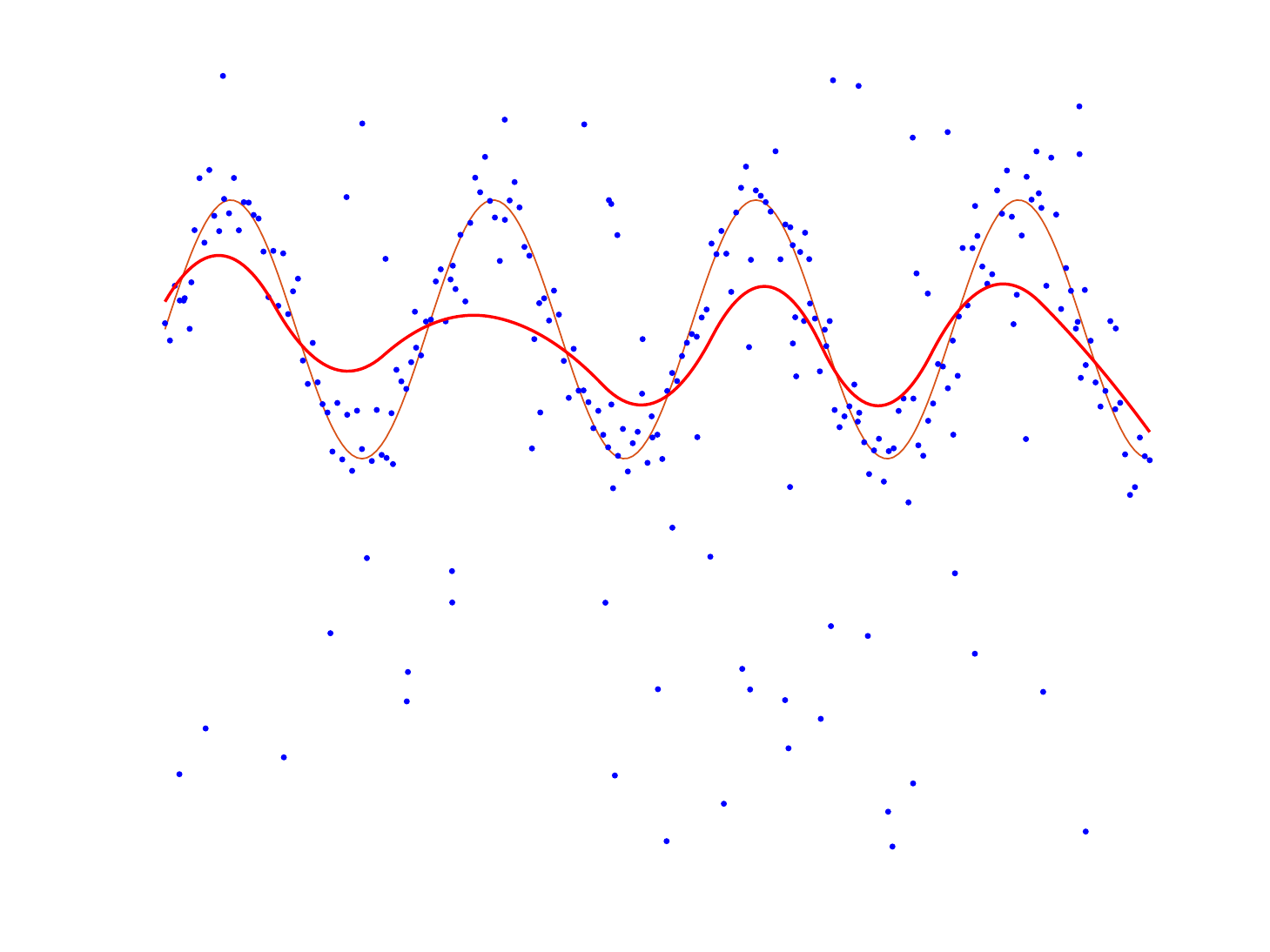}
    } \\
    (a) & (b) & (c)
    \end{tabular}
    \end{center}
    \caption{wQISA curve approximation of three point clouds. The point sets (in blue) are sampled from $y=sin(\pi x)$ in (a), $y=sin(2\pi x)$ in (b) and $y=sin(3\pi x)$ in (c) and then perturbed with Gaussian noise and outliers. Here, we consider a spline space of dimension $10$ over a uniform knot vector and a Gaussian weight function (see Equation \eqref{eq:gauss_weight}) of fixed variance, combined with quartiles to filter the outliers. The figures shows the original functions (in orange) and the approximations (in red). }
    \label{figure:wQISA_defn}
    \end{figure}
\end{itemize}

\subsection{Properties}
\label{sec:properties}
We first introduce bounds for the wQISA approximation. We then explain in what sense shape properties (monotonicity and convexity) are preserved in case of raw data. While we refer the reader to Appendix \ref{Appendix} for a detailed introduction of the univariate case, here, we focus our attention on the bivariate setting, i.e., on  representations of the form $z=f(x,y)$. The extension of these results to higher dimensions is straightforward and just requires a more involved notation.

For the sake of simplicity, we re-write Equation \eqref{equation:dkNNVDSA} as
\begin{equation*}
    f_w(x,y):=\sum_{i=1}^{n_x}\sum_{j=1}^{n_y}\hat{z}_w\left(\xi_x^{(i)},\xi_y^{(j)}\right)\cdot B[\mathbf{x}^{(i)},\mathbf{y}^{(j)}](x,y),
\end{equation*}
where we customize the notation by denoting with $\xi_x^{(i)}$ (resp. $\xi_y^{(j)}$) the $i$-th (resp. $j$-th)  knot average with respect to the global knot vector $\mathbf{x}$ (resp. $\mathbf{y}$) along x (resp. y).

\subsubsection{Global and local bounds}
\begin{proposition}[Global bounds]
\label{proposition:gb_multi}
Let $\mathcal{P}\subset\mathbb{R}^3$ be a point cloud. Given $z_{\text{min}}$, $z_{\text{max}}\in\mathbb{R}$  that satisfy
\[
z_{\text{min}}\le z\le z_{\text{max}}, \quad \text{ for all } (x,y,z)\in\mathcal{P},
\]
then the weighted quasi interpolant spline approximation to $\mathcal{P}$ from some spline space $\mathbb{S}_{p,[\mathbf{x},\mathbf{y}]}$ and some family of weight functions $w_u:\mathbb{R}^2\to[0,+\infty)$ has the same bounds
\[
z_{min}\le f_w(x,y)\le z_{max}, \quad \text{ for all } (x,y)\in\mathbb{R}^2.
\]
\end{proposition}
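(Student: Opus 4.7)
The plan is to establish the bound in two stages: first control the values of the control point estimators $\hat z_w(\xi_x^{(i)},\xi_y^{(j)})$ individually, and then transfer these bounds to $f_w$ using the standard positive-partition-of-unity argument for tensor-product B-splines.

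First I would argue that $z_{\min} \le \hat z_w(\mathbf{u}) \le z_{\max}$ for every $\mathbf{u}\in\mathbb{R}^2$. This follows directly from \eqref{equation:control_points_estimator}: the numerator and denominator only involve the weights $w_{\mathbf{u}}(x,y)\ge 0$, and by hypothesis $z_{\min}\le z\le z_{\max}$ for every $(x,y,z)\in\mathcal{P}$. Multiplying the inequality by the non-negative weight $w_{\mathbf{u}}(x,y)$, summing over $\mathcal{P}$, and dividing by $\sum_{\mathcal{P}} w_{\mathbf{u}}(x,y)>0$ yields the desired sandwich bound. (One should briefly remark that this is well-defined, i.e.\ the denominator is non-zero at the knot averages, which is part of the standing assumptions on the weight families in Definition \ref{definition:wQISA}.)

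Next I would invoke the properties of the tensor product B-spline basis recalled in Section \ref{TPBSplines}: each $B[\mathbf{x}^{(i)},\mathbf{y}^{(j)}]$ is non-negative on $\mathbb{R}^2$ and, since $\mathbf{x}$ and $\mathbf{y}$ are $(p+1)$-regular, the basis forms a partition of unity on $[a_1,b_1]\times[a_2,b_2]$ (and is zero outside). Writing
\[
f_w(x,y)=\sum_{i=1}^{n_x}\sum_{j=1}^{n_y}\hat z_w\!\left(\xi_x^{(i)},\xi_y^{(j)}\right)\,B[\mathbf{x}^{(i)},\mathbf{y}^{(j)}](x,y),
\]
we see that $f_w(x,y)$ is a convex combination of the numbers $\hat z_w(\xi_x^{(i)},\xi_y^{(j)})$ at every point of the domain. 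Combining this with the first step gives $z_{\min}\le f_w(x,y)\le z_{\max}$, and outside the domain $f_w$ vanishes and the conclusion is trivial (or one simply restricts the statement to $[a_1,b_1]\times[a_2,b_2]$, as is implicit in the setup).

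I do not anticipate any real obstacle: the argument is essentially a two-line convex-combination estimate, and the only subtlety worth flagging explicitly is the non-vanishing of the weight denominator in \eqref{equation:control_points_estimator}, which should be stated as a standing assumption on the Parzen windows $w_{\mathbf{u}}$.
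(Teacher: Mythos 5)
Your proposal is correct and follows essentially the same route as the paper: bound the control point estimators $\hat z_w$ by $z_{\min}$ and $z_{\max}$ via the convex-combination structure of \eqref{equation:control_points_estimator}, then transfer the bound to $f_w$ using non-negativity and the partition of unity of the tensor product B-spline basis. Your explicit remark about the non-vanishing of the weight denominator is a small point of care the paper leaves implicit, but it does not change the argument.
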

\begin{proof}
From the partition of unity of a B-spline basis, it follows that
\begin{equation}
\label{eqn:gb_multi}
\resizebox{0.99\hsize}{!}{
\begin{math}
\begin{array}{@{}c@{\;}c@{\;}c@{\;}c@{\;}c@{}}
\min\limits_i\hat{z}_w(\xi_x^{(i)},\xi_y^{(j)}) & \le & \sum\limits_{i=1}^{n_x}\sum\limits_{j=1}^{n_y}\hat{z}_w(\xi_x^{(i)},\xi_y^{(j)})\cdot B[\mathbf{x}^{(i)},\mathbf{y}^{(j)}] & \le & \max\limits_i\hat{z}_w(\xi_x^{(i)},\xi_y^{(j)}) \\
\vge\,\scriptsize{\circled{1}} && \vdequal  && \vle\,\scriptsize{\circled{2}} \\
z_{min} &&f_w && z_{max}
\end{array}
\end{math}
}
\end{equation}
where the inequalities $\circled{1}$ and $\circled{2}$ are a direct consequence of defining $\hat{z}_w$ by means of a convex combination.
\end{proof}
The bounds of Proposition \ref{proposition:gb_multi} can potentially lead to local bounds, for example when the weight functions have bounded support. We discuss this possibility in Corollary \ref{corollary:lb_multi}.

\begin{corollary}[Local bounds]
\label{corollary:lb_multi}
Let $\mathcal{P}\subset\mathbb{R}^3$ be a point cloud. Let $x\in[x_\mu,x_{\mu+1})$ for some $\mu$ in the range $p_x+1\le\mu\le n_x$ and $y\in[y_\nu,y_{\nu+1})$ for some $\nu$ in the range $p_y+1\le\nu\le n_y$. Then
\[
\alpha(\mu,\nu)\le f_w(x,y)\le\beta(\mu,\nu)
\]
for some $\alpha(\mu,\nu),\beta(\mu,\nu)$ which belong to $[z_{min},z_{max}]$.
\end{corollary}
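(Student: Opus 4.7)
The plan is to exploit the local support of the tensor product B-splines together with the same convex-combination argument used in Proposition \ref{proposition:gb_multi}, restricting attention to the few basis functions that do not vanish at $(x,y)$.

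First, I would recall the local support property: for a $(p_k+1)$-regular knot vector, $\mathrm{supp}(B[\mathbf{t}_k^{(i)}])=[t_{k,i},t_{k,i+p_k+1}]$, so $B[\mathbf{x}^{(i)},\mathbf{y}^{(j)}](x,y)=0$ whenever $i\notin\{\mu-p_x,\ldots,\mu\}$ or $j\notin\{\nu-p_y,\ldots,\nu\}$. Hence, on the cell $[x_\mu,x_{\mu+1})\times[y_\nu,y_{\nu+1})$, the double sum in \eqref{equation:dkNNVDSA} collapses to
\[
f_w(x,y)=\sum_{i=\mu-p_x}^{\mu}\sum_{j=\nu-p_y}^{\nu}\hat{z}_w\bigl(\xi_x^{(i)},\xi_y^{(j)}\bigr)\cdot B[\mathbf{x}^{(i)},\mathbf{y}^{(j)}](x,y).
\]
The ranges $p_x+1\le\mu\le n_x$ and $p_y+1\le\nu\le n_y$ guarantee that these index sets lie inside $\{1,\ldots,n_x\}\times\{1,\ldots,n_y\}$, so no boundary terms are missing.

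Next, I would use that the tensor product basis is non-negative and forms a partition of unity on $[x_{p_x+1},x_{n_x+1})\times[y_{p_y+1},y_{n_y+1})$. Therefore the expression above is a convex combination of the $(p_x+1)(p_y+1)$ values $\hat{z}_w(\xi_x^{(i)},\xi_y^{(j)})$ for $(i,j)$ in the local index box. Setting
\[
\alpha(\mu,\nu):=\min_{\substack{\mu-p_x\le i\le\mu\\\nu-p_y\le j\le\nu}}\hat{z}_w\bigl(\xi_x^{(i)},\xi_y^{(j)}\bigr),\qquad \beta(\mu,\nu):=\max_{\substack{\mu-p_x\le i\le\mu\\\nu-p_y\le j\le\nu}}\hat{z}_w\bigl(\xi_x^{(i)},\xi_y^{(j)}\bigr),
\]
the inequality $\alpha(\mu,\nu)\le f_w(x,y)\le\beta(\mu,\nu)$ follows immediately.

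Finally, I would note that each $\hat{z}_w$ is itself a convex combination of $z$-values drawn from $\mathcal{P}$ (Equation \eqref{equation:control_points_estimator}), so $z_{\min}\le\hat{z}_w(\cdot,\cdot)\le z_{\max}$; this yields $\alpha(\mu,\nu),\beta(\mu,\nu)\in[z_{\min},z_{\max}]$, as required. No step here is really difficult; the only thing to be careful about is making the local index range correct, i.e., verifying that the hypotheses on $\mu$ and $\nu$ are precisely what is needed for the $(p_x+1)(p_y+1)$ surviving basis functions to partition unity on the cell, which is guaranteed by the $(p_k+1)$-regularity of the knot vectors.
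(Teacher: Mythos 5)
Your proof is correct and follows essentially the same route as the paper: local support collapses the sum to the $(p_x+1)(p_y+1)$ active basis functions, partition of unity makes $f_w(x,y)$ a convex combination of the local $\hat{z}_w$ values, and the convex-combination structure of $\hat{z}_w$ places everything in $[z_{\min},z_{\max}]$. The only (harmless) difference is your choice of $\alpha(\mu,\nu),\beta(\mu,\nu)$: you take the min/max of the local control point estimators directly, giving a slightly tighter bound, whereas the paper pushes one step further and defines them as the min/max of the $z$-values over the local data subset $\mathcal{P}_{\mu,\nu}$ actually seen by the active weight functions.
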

\begin{proof}
By using the property of local support for B-splines, it follows that
\[
f_w(x,y)=\sum_{i=\mu-p_x}^\mu\sum_{j=\nu-p_y}^\nu\hat{z}_w(\xi_x^{(i)},\xi_y^{(j)})B[\mathbf{x}^{(i)},\mathbf{y}^{(j)}](x,y).
\]
Hence
\begin{equation}
\label{equation:local_bounds}
\begin{array}{@{}c@{\;}c@{\;}c@{\;}c@{\;}c@{}}
\min\limits_{\substack{i=\mu-p_x,\dots,\mu \\ j=\nu-p_y,\dots,\nu}}\hat{z}_w(\xi_x^{(i)},\xi_y^{(j)}) & \le & f_w(x,y) & \le & \max\limits_{\substack{i=\mu-p_x,\dots,\mu \\ j=\nu-p_y,\dots,\nu}}\hat{z}_w(\xi_x^{(i)},\xi_y^{(j)}) \\
\vge\,\scriptsize{\circled{3}} &&   && \vle\,\scriptsize{\circled{4}} \\
\min\Big\{z \text{ s.t. }(x,y,z)\in\mathcal{P}_{\mu,\nu}\Big\} && && \max\Big\{z \text{ s.t. }(x,y,z)\in\mathcal{P}_{\mu,\nu}\Big\} \\
\vdequal &&   && \vdequal \\
\alpha(\mu,\nu) && && \beta(\mu,\nu)
\end{array}
\end{equation}
where 
\[
\mathcal{P}_{\mu,\nu}:={\textstyle\bigcup\limits_{\substack{i=\mu-p_x,\dots,\mu \\ j=\nu-p_y,\dots,\nu}}}
\text{supp}\left(w_{(\xi_x^{(i)},\xi_y^{(j)})}\right)
\cap\mathcal{P}
\]
and where $\text{supp}$ denotes the support of a function. The inequalities $\circled{3}$ and $\circled{4}$ are a direct consequence of defining $\hat{z}_w$ by means of a convex combination. Note that the set $\mathcal{P}^\ast$ of points that are effectively used to compute the approximation, i.e.,
\[
\mathcal{P}^\ast:=\bigcup\limits_{\substack{\mu=p_x+1,\dots,n_x \\ \nu=p_y+1,\dots,n_y}}\mathcal{P}_{\mu,\nu},
\]
may be a proper subset of $\mathcal{P}$.
\end{proof}

Note also that the results of Proposition \ref{proposition:gb_multi} and Corollary \ref{corollary:lb_multi} are independent from the type of mesh but rather rely on the partition of unity property. Therefore, a possibility is to consider local refinement strategies in order to further reduce the computational complexity and gain more flexibility only where truly needed.

\subsubsection{Shape preservation}
Shape preserving representations are crucial in geometric modeling (e.g., in CAD and CAM). Many classical quasi-interpolant strategies for function approximation preserve shape properties, such as the Bernstein approximants, the B-spline or multiquadratic (MQ) quasi-interpolants, the Variation Diminishing Spline Approximation (VDSA) and so on \cite{GAO2015696,JIANG2011,Lyche2011,Wu:1994}. 

In case of points clouds with defects, the average dataset trend is more important than the position of a single point with respect to the others. We thus introduce a notion of monotonicity and convexity for point clouds that take this consideration into account. Given a family of weight functions, we say that a point cloud is \emph{w-monotone} (resp. \emph{w-convex}) if the control point estimator $\hat{z}_w$ is monotone (resp. convex) (see Appendix \ref{Appendix} for a formal definition).

Monotonicity and convexity of the individual coordinates are preserved from $w$-monotonicity and $w$-convexity as a direct consequence of the univariate case, which is detailed in Appendix \ref{Appendix}. More precisely:
\begin{itemize}
    \item \emph{(Monotonicity)} Let us suppose that $\hat{z}_w(\cdot, y_0):\mathbb{R}\to\mathbb{R}$ is monotonic for all $y_0\in[a_2,b_2]$ (or at least it is its restriction to the nodes $\{\xi_x^{(i)}\}_{i=1}^{n_x}$). Then, $f_w$ is an increasing function of $x$ for each $y$. This statement is formally proved in Proposition \ref{proposition:uni_pres_mon}. 
    \item \emph{(Convexity)} Let us suppose that $\hat{z}_w(\cdot, y_0):\mathbb{R}\to\mathbb{R}$ is convex for all $y_0\in[a_2,b_2]$ (or at least it is its restriction to the nodes $\{\xi_x^{(i)}\}_{i=1}^{n_x}$). Then, $f_w$ is a convex function of $x$ for each $y$. This statement is formally proved in Proposition \ref{proposition:uni_pres_conv}. 
\end{itemize}

In the multivariate setting, joint monotonicity and convexity straightforwardly derive from the control net shape \cite{Goodman1989,Lyche2011}, here defined by $\hat{z}_w$. More precisely, a $w$-monotone (resp. $w$-convex) point cloud has a monotone (resp. convex) wQISA approximation.

\subsubsection{Computational complexity}
The wQISA method takes as input the point cloud $\mathcal{P} \subset \mathbb{R}^{d+1}$, the tensor product spline space $\mathbb{S}_{\mathbf{p},[\mathbf{t}_1,\ldots,\mathbf{t}_d]}$ defined by a multi-degree and a set of regular knot vectors, and the Parzen window function $w$. The approximation defined by Equation \eqref{equation:dkNNVDSA} is computed by evaluating Equation \eqref{equation:control_points_estimator} as many times as the dimension of the tensor product spline space, i.e., $\text{dim}(\mathbb{S}_{\mathbf{p},[\mathbf{t}_1,\ldots,\mathbf{t}_d]})=\prod_{i=1}^{d} n_i$.

The single control point estimation depends on the function $w$ chosen and, in particular, on its support (if global or local). In the numerical simulations proposed in Section \ref{sec:simulations}, we mainly focus on k-NN and Inverse Distance Weight (IDW) functions (see Equations \eqref{equation:kNN_weight} and \eqref{equation:Inverse_Distance_Weight}) and, therefore, we here exhibit the computational complexity of wQISA for these choices of $w$. A deepen study of the computational complexity can be found in \cite{Raffo:2020}.

The $k$-nearest neighbor can be efficiently computed using the $k$-d tree algorithm in $O(N\log(N))$ operations \cite{Friedman:1977}, where $N$ is the number of points of the cloud. The $k$-d tree then spatially stores the data in a structure such that, at runtime, the evaluation of $w$ costs $O(k)$. Thus, the computation cost of the wQISA algorithm is given by the maximum of $O(N\log(N))$ and $O(k\cdot{}\text{dim}(\mathbb{S}_{\mathbf{p},[\mathbf{t}_1,\ldots,\mathbf{t}_d]}))$.

The IDW weight is global and thus computes, for a single control point estimation, the linear combination of $N$ terms. Since computing the weight of any point is at most as expensive as the inverse of an Euclidean norm, the computational complexity is $O(Nd)$. The computational cost of the wQISA algorithm is then $O(N\cdot{}\text{dim}(\mathbb{S}_{\mathbf{p},[\mathbf{t}_1,\ldots,\mathbf{t}_d]})))$.

\section{The wQISA method from a probabilistic perspective}\label{sec:prob_interp}
Regression analysis techniques are widely used for prediction and forecasting. In regression problems, the conditional expectation of a response variable $Y$ with respect to its predictor variables $X_1,\ldots,X_p$ is often approximated by its first-order Taylor expansion. Linearity in the predictors leads to a much easier interpretability of the model and is very efficient with sparse and small data. Global and local least square approaches are among the most popular linear regression methods. Nevertheless, these models need to solve linear systems of equations, which thus unnecessarily increases computational complexity as the data size increases. 
Moreover, linear models often depend on the normal distribution of the residuals, making them unreliable when the actual distribution is asymmetric or prone to outliers.

As the assumption of linearity might be too restrictive for real-world phenomena, various methods for moving beyond it have been introduced. A popular approach, known as \emph{linear basis expansion}, considers multiple transformations of the predictors and then applies linear models in this richer space.
Compared to traditional linear models, polynomial transformations of the predictors offer a more flexible data representation as they lead to higher-order Taylor expansions. On the other hand, they suffer a lack of local shape control due to their global nature. 
Compared to polynomial bases, piecewise polynomials allow to combine an increased flexibility with a reduced number of coefficients to compute. Furthermore, nonparametric regression may be used for a variety of purposes, such as scatterplot smoothing for pure exploration and interval estimates for uncertainty examination \cite{Hastie2009}.

As we theoretically and numerically show in Sections \ref{sec:prob_interp} and \ref{sec:simulations}, the wQISA method offers a competitive alternative to handle strongly perturbed large point clouds at a reduced computational cost, even when prone to outliers.
In this Section we interpret the WQISA method as a non parametric regression problem. Independent ongoing studies on quasi-interpolation from a stochastic perspective are in \cite{Gao_2,Gao_1}.

\subsection{Formulation of the regression problem}
Let $Y$ be a univariate response variable. For the sake of simplicity, we restrict here to two predictor variables $X_1$ and $X_2$. As for the previous sections, the generalization to the multivariate case is trivial and just requires only a more involved notation.
From now on, we assume that the relationship between the predictors and the dependent variable can be expressed as the conditional expectation:
\[
\mathbb{E}(Y|X_1=x_1,X_2=x_2)=f_w(x_1,x_2).
\]
The approximation $f_w$ is here restricted to belong to a subspace of $\mathbb{S}_{\mathbf{p},[\mathbf{x}_1,\mathbf{x}_2]}$, where $\mathbf{p}\in\mathbb{N}^\ast\times\mathbb{N}^\ast$ is the (bi)-degree of the spline space over the (global) knot vectors $\mathbf{x}_1$ and $\mathbf{x}_2$. More precisely, the relation between the observations $Y_i$ and the independent variables $X_{i,1}$ and $X_{i,2}$ is formulated as
\begin{equation}
\label{equation:prob_mod}
Y_i=\sum_{j_1=1}^{n_1}\sum_{j_2=1}^{n_2}c_{j_1,j_2}B[\mathbf{x}_1^{(j_1)},\mathbf{x}_2^{(j_2)}](X_{i,1},X_{i,2})+\varepsilon_i, \quad i=1,\ldots,N,
\end{equation}
where
\begin{itemize}
\item $B[\mathbf{x}_1^{(j_1)},\mathbf{x}_2^{(j_2)}]:\mathbb{R}^2\to[0,1]$ is the $(j_1,j_2)$-th tensor product B-spline function with respect to the global knot vectors $\mathbf{x}_1$ and $\mathbf{x}_2$ respectively along $X_1$ and $X_2$.
\item $\varepsilon_i$ is the residual or disturbance term -- an unobserved random variable that perturbs the linear relationship between the dependent variable and regressors.
\end{itemize}
Relation \eqref{equation:prob_mod} can be expressed, up to a reordering of the indexes $(j_1,j_2)$, in the matrix form
\begin{equation}
\label{equation:matr_prob_mod}
\mathbf{Y}=\mathbf{B}\cdot\mathbf{c}+\boldsymbol{\varepsilon},
\end{equation}
where $\mathbf{Y}\in\mathbb{R}^{N\times1}\ni\boldsymbol{\varepsilon}$, $\mathbf{B}\in\mathbb{R}^{N\times (n_1\cdot n_2)}$ and $\mathbf{c}\in\mathbb{R}^{(n_1\cdot n_2)\times1}$. 

\subsection{Definition of the coefficient estimators}
There are different methods to fit a linear model to a given dataset. 
In the following, we introduce our new estimators for the B-spline coefficients. The $(j_1,j_2)$-th component of $\hat{\mathbf{c}}$ is defined by
\begin{equation}
\label{equation:estimators_prob_mod}
\hat{c}_{j_1,j_2}:=\dfrac{\sum\limits_{i=1}^N Y_i\cdot w_{(\xi_1^{(j_1)},\xi_2^{(j_2)})}\left(X_{i,1},X_{i,2}\right)}{\sum\limits_{i=1}^Nw_{(\xi_1^{(j_1)},\xi_2^{(j_2)})}\left(X_{i,1},X_{i,2}\right)},
\end{equation}
where $\xi_1^{(j_1)}$ and $\xi_2^{(j_2)}$  are the \emph{knot averages} with respect to the B-spline $B_{j_1,j_2}$ along the two directions. Notice that the weight functions $$w_{(\xi_1^{(j_1)},\xi_2^{(j_2)})}:\mathbb{R}^2\to[0,+\infty)$$ act both as a penalty term and as a smoother on the given data. 

\subsection{Inference for regression purposes: the bias-variance decomposition}
Suppose the data arise from a model $Y=f(X_1,X_2)+\varepsilon$. For the sake of simplicity, we assume here that the values of the predictors are fixed in advance, hence nonrandom. Further, we assume the error terms $\varepsilon_i$ to be \emph{independent identically distributed} (i.i.d) with mean $\mu_\varepsilon=0$ and variance $\sigma_\varepsilon^2$.

The generalization performances of a method relies on the simultaneously minimization of two sources of error:
\begin{itemize}
\item The \emph{bias} measures the difference between the model's expected predictions and the true values. High bias means an oversimplification of the model, i.e., the model does not produce accurate predictions (underfitting). The bias of a model is formally defined by
\begin{equation}
\label{equation:bias_defn}
\text{Bias}^2\left[\hat{f}_w(X_1,X_2)\right]:=\left(\mathbb{E}\left[\hat{f}_w(X_1,X_2)\right]-f(X_1,X_2)\right)^2.
\end{equation}
\item The \emph{variance} measures the model's sensitivity to small fluctuations in the training set. High variance can result in a model that interpolates the given data but does not generalize on data which hasn't seen before (overfitting). The variance of a model is defined by
\begin{equation}
\label{equation:var_defn}
\text{Var}\Bigl[\hat{f}_w(X_1,X_2)\Bigr]:=\mathbb{E}\left[\left(\hat{f}_w(X_1,X_2)-\mathbb{E}\left[\hat{f}_w(X_1,X_2)\right]\right)^2\right].
\end{equation}
\end{itemize}

\subsubsection{Bias of a wQISA model}
Let  $(X_1,X_2)\in[x_{1,\mu},x_{1,\mu+1})\times[x_{2,\nu},x_{2,\nu+1})$ for some $\mu=p_1+1,\ldots,n_1$ and for some $\nu=p_2+1,\ldots,n_2$. By using the property of local support of B-splines, we can then express $\mathbb{E}[\hat{f}_w(X_1,X_2)]$ as
\begin{equation}
\label{equation:Efhat}
\mathbb{E}[\hat{f}_w(X_1,X_2)]=\sum\limits_{j_1=\mu-p_1}^\mu\sum\limits_{j_2=\nu-p_2}^\nu\mathbb{E}[\hat{c}_{j_1,j_2}]\cdot B[\mathbf{x}_1^{(j_1)},\mathbf{x}_2^{(j_2)}](X_{1},X_{2}),
\end{equation}
where
\begin{equation}
\label{equation:Ecj1j2}
\begin{split}
\mathbb{E}\bigl[\hat{c}_{j_1,j_2}\bigr]
&=\dfrac{\sum\limits_{i=1}^N f(X_{i,1},X_{i,2})\cdot w_{(\xi_1^{(j_1)},\xi_2^{(j_2)})}\left(X_{i,1},X_{i,2}\right)}{\sum\limits_{i=1}^Nw_{(\xi_1^{(j_1)},\xi_2^{(j_2)})}\left(X_{i,1},X_{i,2}\right)}
\end{split}
\end{equation}
is a convex combination. Once a spline space and weight functions have been chosen, Equations \eqref{equation:Efhat} and \eqref{equation:Ecj1j2} can be combined to write down the exact formula of bias. However, in some cases bounds can simplify its study. Analogously to Proposition \ref{proposition:gb_multi}, we can compute the following bounds for $\mathbb{E}\bigl[\hat{c}_{j_1,j_2}\bigr]$
\begin{equation}
\label{equation:bounds_Ecj1j2}
\min\limits_{i\in\mathcal{I}_{\hat{c}_{j_1,j_2}}}  f(X_{i,1},X_{i,2})
\le
\mathbb{E}\bigl[\hat{c}_{j_1,j_2}\bigr]
\le
\max\limits_{i\in\mathcal{I}_{\hat{c}_{j_1,j_2}}} f(X_{i,1},X_{i,2}),
\end{equation}
where $$\mathcal{I}_{\hat{c}_{j_1,j_2}}:=\left\{i=1,\ldots,N \text{ s.t. } w_{(\xi_1^{(j_1)},\xi_2^{(j_2)})}\left(X_{i,1},X_{i,2}\right)\ne 0\right\}.$$
 
By combining Equations \eqref{equation:Efhat} and \eqref{equation:bounds_Ecj1j2}, it follows that
\begin{equation}
\label{equation:bounds_SL}
\resizebox{0.99\hsize}{!}{
\begin{math}
\begin{array}{@{}c@{\;}c@{\;}c@{\;}c@{\;}c@{}}
\min\limits_{i\in\mathcal{I}_{\mu,\nu}} f(X_{i,1},X_{i,2}) & \le & \sum\limits_{j_1=\mu-p_1}^{\mu}\sum\limits_{j_2=\nu-p_2}^{\nu}\mathbb{E}\bigl[\hat{c}_{j_1,j_2}\bigr]B[\mathbf{x}_1^{(j_1)},\mathbf{x}_2^{(j_2)}](X_1,X_2) & \le &\max \limits_{i\in\mathcal{I}_{\mu,\nu}}  f(X_{i,1},X_{i,2}) \\
\vdequal &&  \vdequal  && \vdequal  \\
\alpha(\mu,\nu) && \mathbb{E}\bigl[\hat{f}_w(X_1,X_2)\bigr] && \beta(\mu,\nu) \\
\end{array}
\end{math}
}
\end{equation}
where $$\mathcal{I}_{\mu,\nu}:=\bigcup\limits_{\substack{j_1=\mu-p_1,\ldots,\mu \\ j_2=\nu-p_2,\ldots,\nu}}\mathcal{I}_{\hat{c}_{j_1,j_2}}$$  and where $\alpha$ and $\beta$ denote the lower and upper bounds. We conclude that

\begin{equation}
\label{equation:bounds_bias}
\text{Bias}^2\left[\hat{f}_w(X_1,X_2)\right] 
\begin{cases}
\le  \bigl(\alpha(\mu,\nu) - f(X_1,X_2)\bigr)^2, & \text{if } \mathbb{E}\bigl[\hat{f}_w(X_1,X_2)\bigr]\le f(X_1,X_2) \\

\le
\bigl(\beta(\mu,\nu) - f(X_1,X_2)\bigr)^2, & \text{if } \mathbb{E}\bigl[\hat{f}_w(X_1,X_2)\bigr]\ge f(X_1,X_2) \\
\end{cases},
\end{equation}
where $\alpha(\mu,\nu)$ and $\beta(\mu,\nu)$ denote the minimum of maximum in  Equation \eqref{equation:bounds_SL}.

\subsubsection{Variance of a wQISA model}
In the following Lemma we provide an exact formula for the variance while proving that, in the worst possible case, the variance will still be upper bounded by $\sigma_\varepsilon^2$.  For the sake of simplicity, we consider a reordering of B-splines as in Equation \eqref{equation:matr_prob_mod}. This choice allows to substitute the indexes $(j_1,j_2)$ with a single index $j$.

\begin{lemma} 
\label{lemma:variance_estimator}
The variance of $\hat{f}_w$ is upper-bounded by the variance of the error, i.e.,
$$\text{Var}\Bigl[\hat{f}_w(X_1,X_2)\Bigr]\le\sigma^2_\varepsilon.$$ 
\end{lemma}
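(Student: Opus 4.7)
The plan is to decompose the estimator $\hat{f}_w(X_1,X_2)$ into its mean plus a linear combination of the i.i.d.\ residuals $\varepsilon_i$, derive an exact expression for the variance, and then bound the relevant sum of squares by $1$ using non-negativity and the partition-of-unity property of the tensor product B-spline basis.

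First, writing $Y_i = f(X_{i,1},X_{i,2}) + \varepsilon_i$ and denoting by $w_j$ the weight function attached to the $j$-th knot average, I would split the coefficient estimator in Equation \eqref{equation:estimators_prob_mod} as
$$\hat{c}_j = \mathbb{E}[\hat{c}_j] + \sum_{i=1}^N \alpha_{i,j}\,\varepsilon_i, \qquad \alpha_{i,j} := \frac{w_j(X_{i,1},X_{i,2})}{\sum_{k=1}^N w_j(X_{k,1},X_{k,2})}.$$
The $\alpha_{i,j}$ are non-negative and, for each fixed $j$, satisfy $\sum_i \alpha_{i,j} = 1$; this is the same convex-combination structure already exploited in Proposition \ref{proposition:gb_multi}. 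Substituting into the spline expansion $\hat{f}_w(X_1,X_2) = \sum_j \hat{c}_j B_j(X_1,X_2)$ and swapping the order of summation gives
$$\hat{f}_w(X_1,X_2) - \mathbb{E}[\hat{f}_w(X_1,X_2)] = \sum_{i=1}^N \gamma_i(X_1,X_2)\,\varepsilon_i, \qquad \gamma_i(X_1,X_2) := \sum_{j} \alpha_{i,j}\, B_j(X_1,X_2).$$
Because the $\varepsilon_i$ are uncorrelated with common variance $\sigma_\varepsilon^2$, the exact formula
$$\mathrm{Var}\bigl[\hat{f}_w(X_1,X_2)\bigr] = \sigma_\varepsilon^2 \sum_{i=1}^N \gamma_i(X_1,X_2)^2$$
follows immediately.

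The remaining step is to show $\sum_i \gamma_i^2 \le 1$. By construction each $\gamma_i \ge 0$, and using $\sum_i \alpha_{i,j} = 1$ together with partition of unity of the tensor product B-spline basis yields
$$\sum_{i=1}^N \gamma_i(X_1,X_2) = \sum_{j} B_j(X_1,X_2) \sum_{i=1}^N \alpha_{i,j} = \sum_j B_j(X_1,X_2) = 1.$$
Hence $\gamma_i \in [0,1]$, which forces $\gamma_i^2 \le \gamma_i$, and therefore $\sum_i \gamma_i^2 \le \sum_i \gamma_i = 1$, giving the claimed bound $\mathrm{Var}[\hat{f}_w(X_1,X_2)] \le \sigma_\varepsilon^2$.

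The main obstacle is not a deep technical one but careful bookkeeping: verifying that $(X_1,X_2)$ lies in the domain where the B-splines form a genuine partition of unity, and that the weight denominators $\sum_k w_j(X_{k,1},X_{k,2})$ are non-zero so that the $\alpha_{i,j}$ are well-defined. Once these standing assumptions are granted, the bound $\sigma_\varepsilon^2$ is attained exactly when the weights $\gamma_i$ concentrate on a single data point, which makes the estimate sharp in the worst case and clarifies the role of the Parzen window as a variance-reducing smoother.
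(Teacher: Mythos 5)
Your proof is correct, and it reaches the paper's exact variance formula by a slightly different decomposition. The paper works in ``coefficient space'': it expands $\mathrm{Var}[\hat f_w]=\sum_i\sum_j\mathrm{Cov}(\hat c_i,\hat c_j)B_iB_j$, computes each covariance explicitly as $\sigma_\varepsilon^2$ times a ratio of weight sums, and then concludes with the one-line remark that the bound follows because $B_iB_j$ has the partition of unity property --- an argument that implicitly also needs each covariance ratio to be at most $1$ (true because the numerator consists of the diagonal terms $k_1=k_2$ of the non-negative denominator). You instead work in ``error space'': you linearize $\hat f_w-\mathbb{E}[\hat f_w]=\sum_i\gamma_i\varepsilon_i$ with $\gamma_i=\sum_j\alpha_{i,j}B_j$, which packages both ingredients (convexity of the weights over the data index $i$ and partition of unity over the basis index $j$) into the single statement that the $\gamma_i$ are non-negative and sum to one, so that $\sum_i\gamma_i^2\le 1$ is immediate. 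The two exact expressions coincide, since $\sum_i\gamma_i^2=\sum_{j}\sum_{j'}B_jB_{j'}\sum_i\alpha_{i,j}\alpha_{i,j'}$ reproduces the paper's double sum of covariances term by term. Your version is arguably tidier: it makes the sharpness of the bound transparent (equality exactly when some $\gamma_i=1$), and it surfaces the two standing assumptions --- nonvanishing weight denominators and $(X_1,X_2)$ lying where the B-splines genuinely sum to one --- that the paper's terser justification leaves implicit.
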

\begin{proof}
\begin{equation*}
\begin{split}
\text{Var}\Bigl[\hat{f}_w(X_1,X_2)\Bigr]& =\mathbb{E}\left[\left(\hat{f}_w\left(X_1,X_2\right)-\mathbb{E}\bigl(\hat{f}_w\left(X_1,X_2\right)\bigr)\right)^2\right]= \\
&=\mathbb{E}\left[\left(\sum_i\hat{c_i}B_i(X_1,X_2)-\sum_i\mathbb{E}\left[\hat{c_i}\right]B_i(X_1,X_2)\right)^2\right]= \\
&=\mathbb{E}\left[\left(\sum_i\left(\hat{c_i}-\mathbb{E}\left[\hat{c_i}\right]\right)B_i(X_1,X_2)\right)^2\right]= \\
&=\sum_i\sum_j\mathbb{E}\left[\left(\hat{c_i}-\mathbb{E}\left[\hat{c_i}\right]\right)\left(\hat{c_j}-\mathbb{E}\left[\hat{c_j}\right]\right)\right]B_i(X_1,X_2)B_j(X_1,X_2)= \\
&=\sum_i\sum_j\text{Cov}\left(\hat{c_i},\hat{c_j}\right)B_i(X_1,X_2)B_j(X_1,X_2),
\end{split}
\end{equation*}
where
\begin{equation*}
\resizebox{0.99\hsize}{!}{
\begin{math}
\begin{aligned}
\text{Cov}\left(\hat{c_i},\hat{c_j}\right)&=\text{Cov}\left(\dfrac{\sum_{k_1}Y_{k_1}\cdot w_{(\xi_1^{(i)},\xi_2^{(i)})}\left(X_{k_1,1},X_{k_1,2}\right)}{\sum_{k_1} w_{(\xi_1^{(i)},\xi_2^{(i)})}\left(X_{k_1,1},X_{k_1,2}\right)},\dfrac{\sum_{k_2}Y_{k_2}\cdot w_{(\xi_1^{(j)},\xi_2^{(j)})}\left(X_{k_2,1},X_{k_2,2}\right)}{\sum_{k_2} w_{(\xi_1^{(j)},\xi_2^{(j)})}\left(X_{k_2,1},X_{k_2,2}\right)}\right)=\\
&=\dfrac{\sum_{k_1}\sum_{k_2}w_{(\xi_1^{(i)},\xi_2^{(i)})}\left(X_{k_1,1},X_{k_1,2}\right)\cdot{}w_{(\xi_1^{(j)},\xi_2^{(j)})}\left(X_{k_2,1},X_{k_2,2}\right)\text{Cov}\left({Y_{k_1}},{Y_{k_2}}\right)}{\sum_{k_1}\sum_{k_2}w_{(\xi_1^{(i)},\xi_2^{(i)})}\left(X_{k_1,1},X_{k_1,2}\right)\cdot{}w_{(\xi_1^{(j)},\xi_2^{(j)})}\left(X_{k_2,1},X_{k_2,2}\right)}=\\
&=\sigma^2_\varepsilon\dfrac{\sum_{k_1}w_{(\xi_1^{(i)},\xi_2^{(i)})}\left(X_{k_1,1},X_{k_1,2}\right)\cdot{}w_{(\xi_1^{(j)},\xi_2^{(j)})}\left(X_{k_1,1},X_{k_1,2}\right)}{\sum_{k_1}\sum_{k_2}w_{(\xi_1^{(i)},\xi_2^{(i)})}\left(X_{k_1,1},X_{k_1,2}\right)\cdot{}w_{(\xi_1^{(j)},\xi_2^{(j)})}\left(X_{k_2,1},X_{k_2,2}\right)}.
\end{aligned}
\end{math}
}
\end{equation*}
Thus
\begin{equation*}
\resizebox{0.99\hsize}{!}{
\begin{math}
\begin{aligned}
\text{Var}\Bigl[\hat{f}_w(X_1,X_2)\Bigr]&=
\sum_i\sum_j\text{Cov}\left(\hat{c_i},\hat{c_j}\right)B_i(X_1,X_2)B_j(X_1,X_2)=
\\
&=
\sigma^2_\varepsilon\sum_i\sum_j\dfrac{\sum_{k_1}w_{(\xi_1^{(i)},\xi_2^{(i)})}\left(X_{k_1,1},X_{k_1,2}\right)\cdot{}w_{(\xi_1^{(j)},\xi_2^{(j)})}\left(X_{k_1,1},X_{k_1,2}\right)}{\sum_{k_1}\sum_{k_2}w_{(\xi_1^{(i)},\xi_2^{(i)})}\left(X_{k_1,1},X_{k_1,2}\right)\cdot{}w_{(\xi_1^{(j)},\xi_2^{(j)})}\left(X_{k_2,1},X_{k_2,2}\right)}\cdot \\
&\cdot B_i(X_1,X_2)B_j(X_1,X_2)\le\sigma^2_\varepsilon,
\end{aligned}
\end{math}
}
\end{equation*}
where the inequality holds because $B_iB_j$ has the partition of unity property.
\end{proof}

In Lemma \ref{lemma:variance_estimator}, the exact expression of the variance makes it possible to  compute exact and approximated (pointwise) standard error bands (see Equation \eqref{equation:se_band}).

\subsubsection{Bias-variance decomposition for the $k$-NN weight}
Let's consider an example to show how the results of the section work in practice. Let $w$ be a $k$-NN weight function (see Equation \eqref{equation:kNN_weight}). The exact expression of the bias is found by combining Equation \eqref{equation:bias_defn} with the expected value
\begin{equation*}
    \begin{split}
    \mathbb{E}[\hat{f}_w(X_1,X_2)]&=\dfrac{1}{k}
    \sum\limits_{j_1=\mu-p_1}^{\mu}\sum\limits_{j_2=\nu-p_2}^{\nu}
    B[\mathbf{x}_1^{(j_1)},\mathbf{x}_2^{(j_2)}](X_1,X_2)\cdot{} \\
    &\cdot{}\sum\limits_{(X_{i,1},X_{i,2})\in N_k(\xi_1^{(j_1)},\xi_2^{(j_2)})}f(X_{i,1},X_{i,2}).
    \end{split}
\end{equation*}
The exact expression of variance is given by
\begin{equation*}
    \text{Var}\Bigl[\hat{f}_w(X_1,X_2)\Bigr]=\sigma^2_\varepsilon\sum_i\sum_j\dfrac{k_{i,j}}{k^2}\cdot B_i(X_1,X_2)B_j(X_1,X_2)\le\dfrac{\sigma^2_\varepsilon}{k},
\end{equation*}
where $k_{i,j}$ is the number of points in common, if any, among the $k$-closest to $(\xi_1^{(i)},\xi_2^{(i)})$ and $(\xi_1^{(j)},\xi_2^{(j)})$.

For small $k$, the estimate $\hat{f}_w$ can potentially adapt itself better to the underlying $f$, as it will avoid points further away to the knot averages. Under the assumption of increasing the point cloud size while keeping the sampling uniform, the bias for the 1-NN weight function vanishes entirely as the size of the training set approaches infinity and the mesh is uniformly refined. On the other hand, larger values of $k$ can decrease the variance.

\subsubsection{Numerical interpretation of the bias-variance decomposition}

Figure \ref{figure:bias_variance} shows the effect of spline spaces of different dimensions on the simple example
\begin{equation*}
    Y=\sin{\pi X}+\varepsilon,
\end{equation*}
with $X\sim U[-2,2]$ and $\varepsilon\sim N(0,\sigma^2)$. Our dataset consists of $N=300$ points $(x_i,y_i)$ sampled on the exact curve and then perturbed.

The weighted quasi interpolant spline approximations for three different uniform knot vectors are shown. For the sake of simplicity, we here considered a $10$-NN weight function. 
The shaded regions in the figures represent the (pointwise) standard error band of $\hat{f}_w$, i.e., the region 
\begin{equation}
    \label{equation:se_band}
    \hat{f}_w(X)\pm z^{(1-\alpha)}\cdot\sqrt{Var\left[\hat{f}_w(X)\right]},
\end{equation}
where $z^{(1-\alpha)}$ is the $1-\alpha$ percentile of the normal distribution.
The three approximations displayed in Figures \ref{figure:bias_variance}(b-d) give a graphical representation of the bias-variance trade-off problem with respect to the dimension of the spline space:
\begin{itemize}
    \item[\textbf{n=5}]  The spline under-fits the data, with a more dramatic bias in those regions with a higher curvature
    \item[\textbf{n=15}] Compared to the previous case, the fitted function is closer to the true function. The variance has not increased appreciably yet.
    \item[\textbf{n=30}] The spline over-fits the data, which leads to a locally increased width of the bands.
\end{itemize}

In practice, the tuning parameters (here: $n$) can be selected via automatic procedures, for instance by using the $K$-fold cross-validation, generalized cross-validation and the so-called $C_p$ statistic \cite{Hastie2009}. In Figure \ref{figure:bias_variance}(a) we include the $5$-fold cross-validation curve
\begin{equation*}
    CV(n)=\dfrac{1}{N}\sum_{i=1}^N\left(y_i-\hat{f}_w(x_i)\right)^2,
\end{equation*}
where $f_w$ depends on $n$ via the spline space.

\begin{figure}[!ht]
\begin{tabular}{cc}
\includegraphics[scale=0.4]{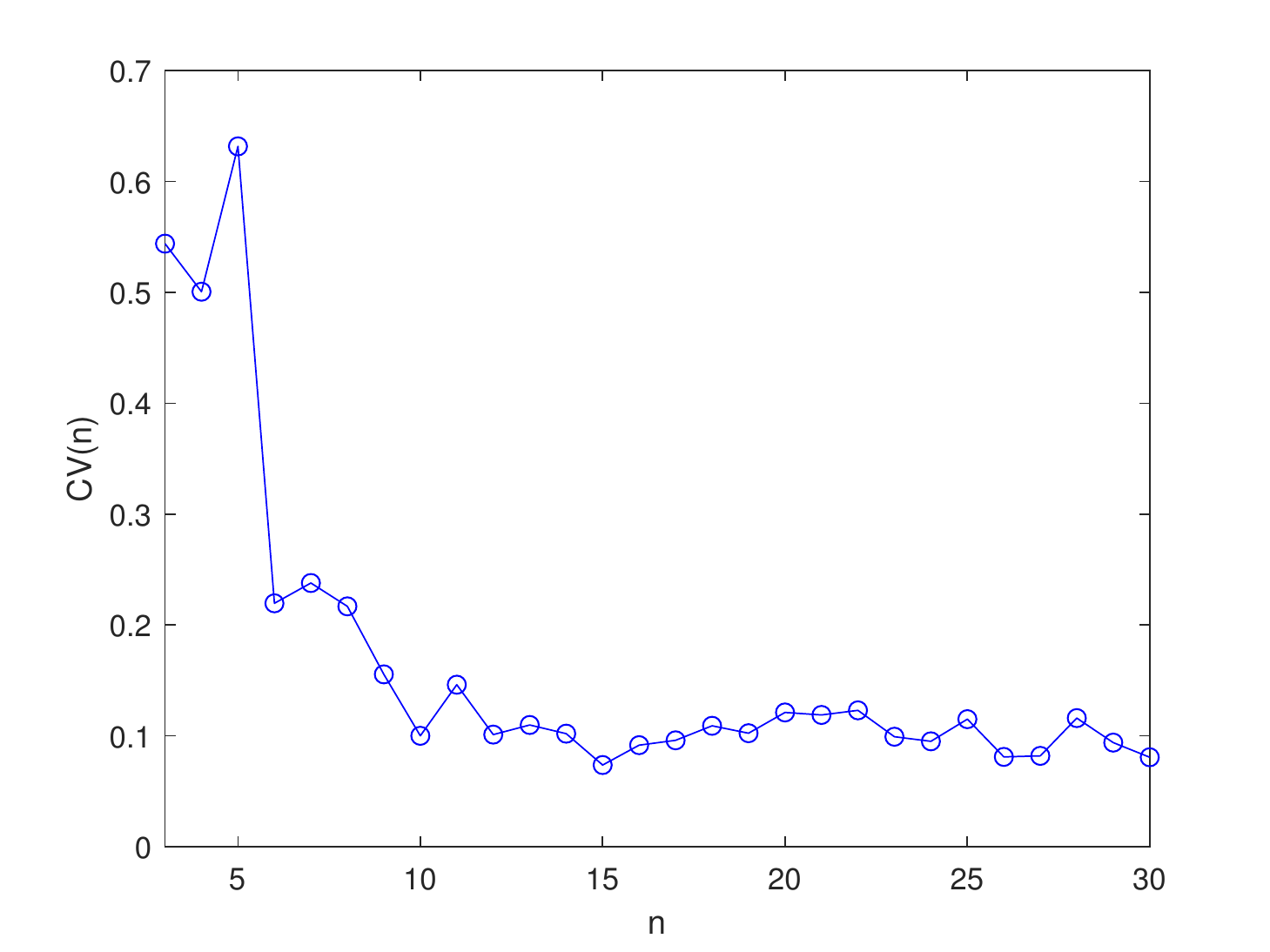}
&
\includegraphics[scale=0.4]{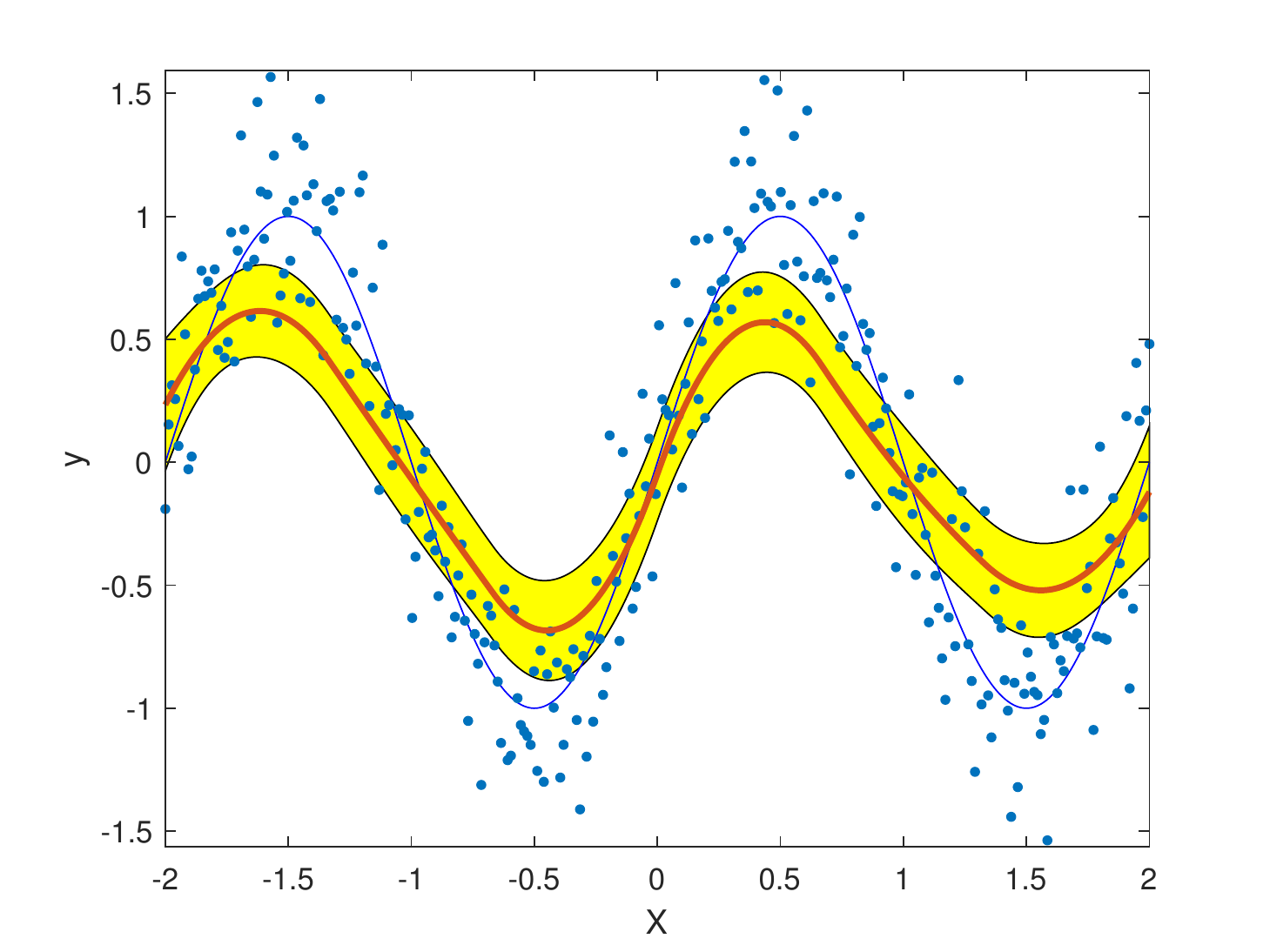}
\\
(a) & (b) \\
\includegraphics[scale=0.4]{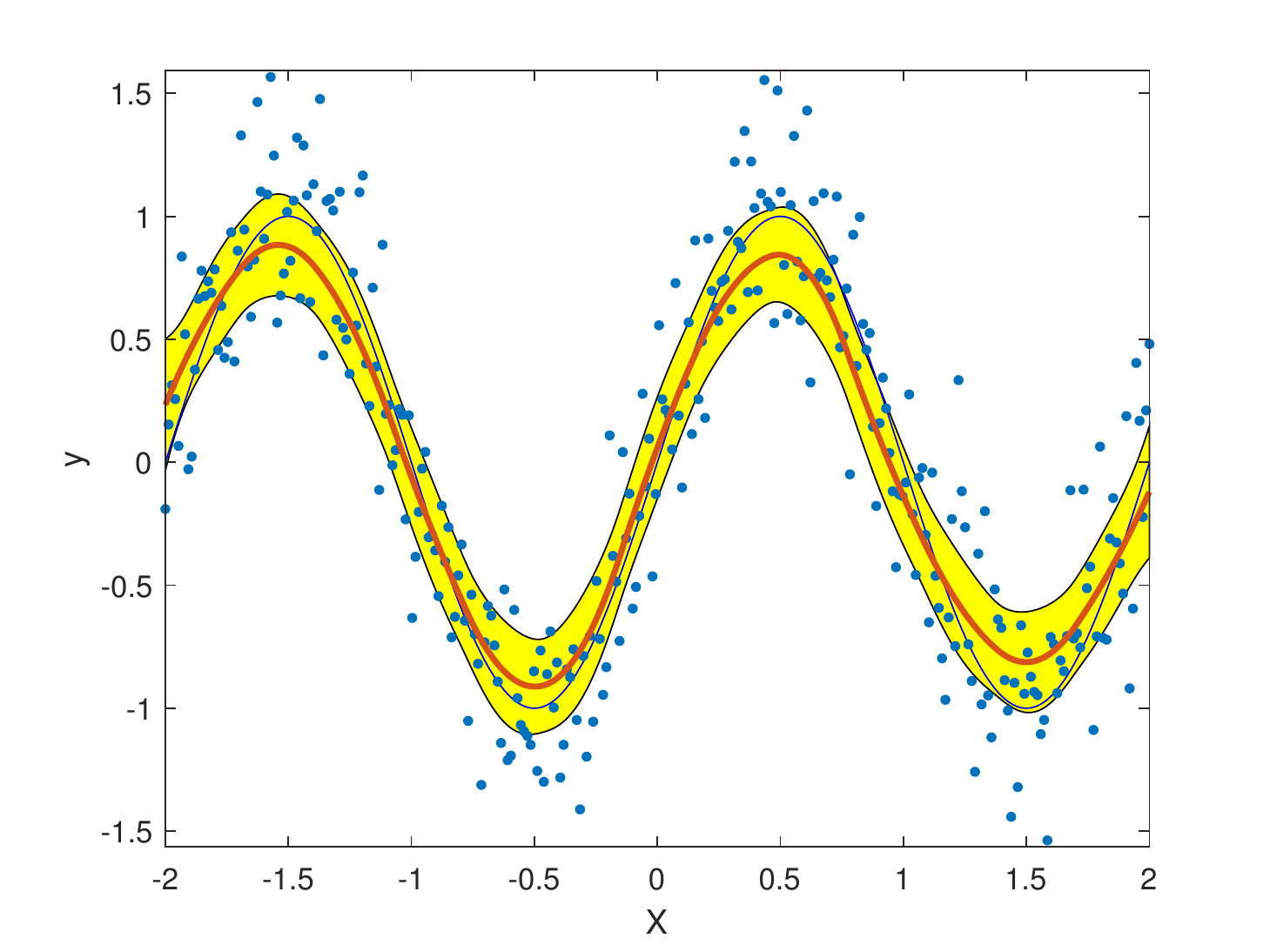}
&
\includegraphics[scale=0.4]{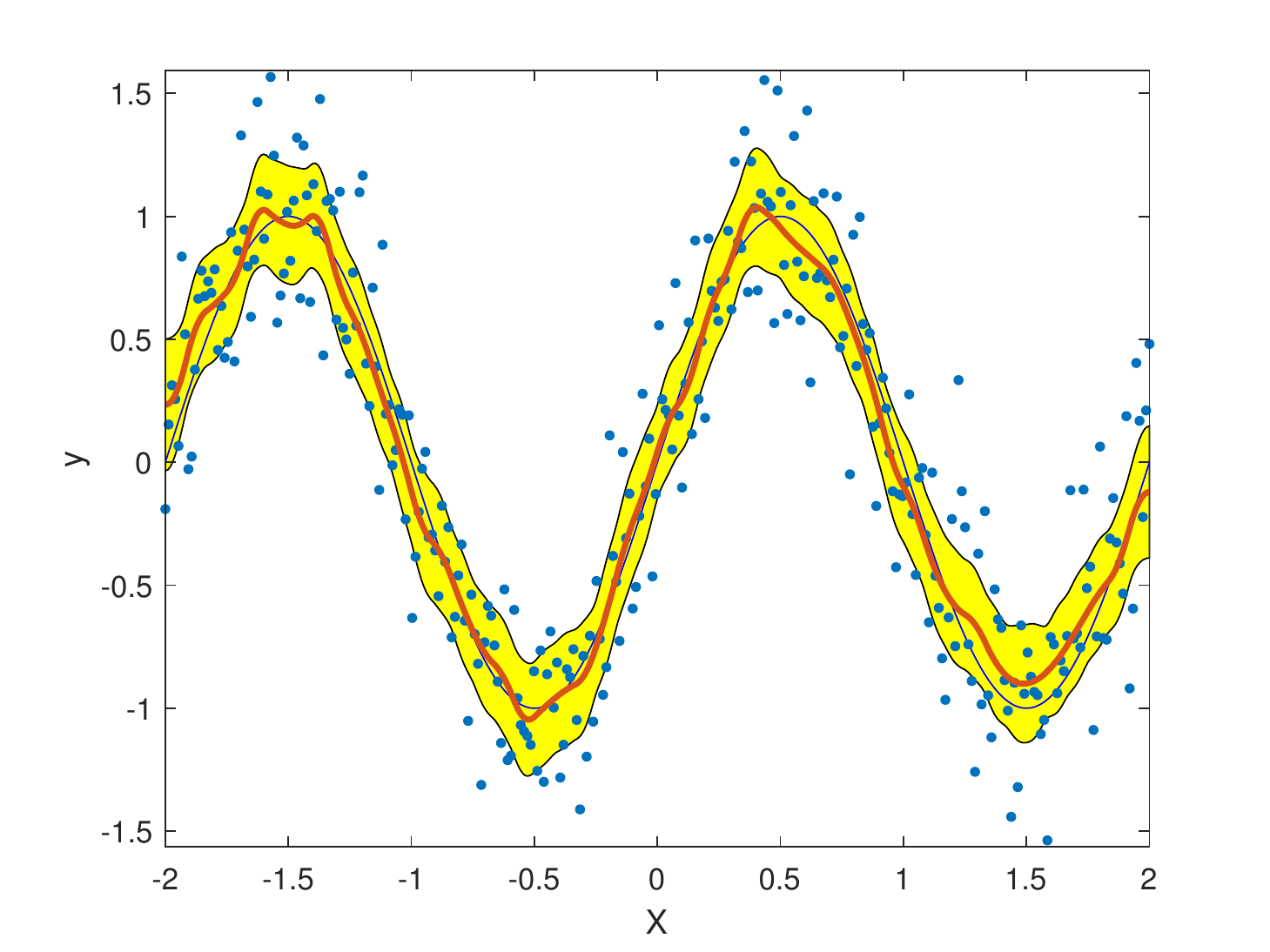}
\\
(c) & (d)
\end{tabular}
\caption{Bias-variance tradeoff. In (a) we show the CV(n) curve  for a realization from the chosen nonlinear additive error model. The minimum is reached at $n=15$. The remaining panels show the data, the true function (in blue), the weighted quasi interpolant spline approximations (in red) and the (yellow shaded)  bands of Equation \eqref{equation:se_band}, for spline spaces of dimension $n=5$ (b), $n=15$ (c) and $n=50$ (d). The bands corresponds here to an approximate $95\%$ confidence interval.}
\label{figure:bias_variance}
\end{figure}

Figure \ref{figure:variable_noise} shows the approximation of a point cloud affected by the non-uniform noise $\varepsilon \sim N(0,s(X))$, defined as follows:
\begin{equation*}
    s(X)=e^{-\dfrac{1}{4(1+e^{4X-2})}}.
\end{equation*}
The dataset consists of $N=400$ points and is approximated by a spline space containing $n=15$ B-splines over a uniform knot vector. 

\begin{figure}[!ht]
\centering
\includegraphics[scale=0.4]{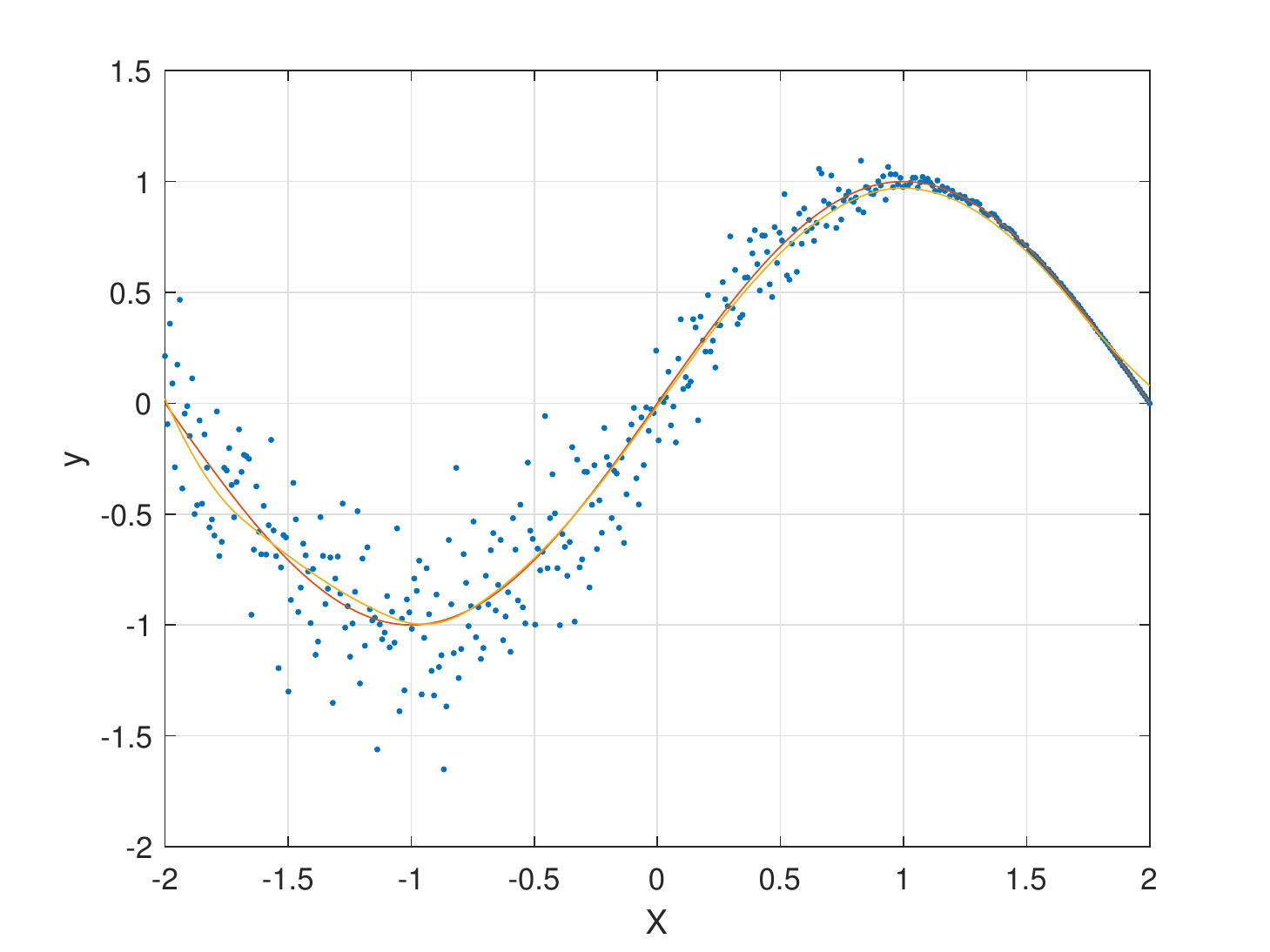}
\caption{Variable noise approximation. We show the original curve $f(X)=\sin{(\pi/2\cdot X)}$ (in red) and the spline approximation (in yellow).}
\label{figure:variable_noise}
\end{figure}

\newpage\phantom{blabla}
\newpage\phantom{blabla}

\section{Numerical simulations}
\label{sec:simulations}
We draw the effectiveness of our method in a number of real data coming from different sources and application domains. While \cite{Raffo:2020} focused on the local approximation of 3D point clouds by wQISA surfaces, this section shows how the method is able to address the approximation problem for different dimensions. Indeed, our examples include curve approximation (on images and 3D objects), surface approximation (of 3D point clouds) and simulation of natural phenomena (like rainfall precipitation) over surfaces. Unless otherwise stated, we focus here on (bi-)quadratic spline approximations defined over uniform knot vectors, as they provide a sufficient flexibility for our purposes. Nevertheless, one can consider \mbox{(bi-)degrees} as additional parameters to assess and perform knot insertion to increase the degrees of freedom only where they are actually needed.

\subsection{Evaluation criteria}
\label{sec:error}
The data acquisition devices and the subsequent post-processing operations generally introduce geometric and numerical artefacts. Unfortunately, for most of the data, the information on the quality of the acquisition devices and type of post-processing operations are lost or not available. Therefore, the hypothesis that the data to be approximated are \emph{exact} is often unrealistic. Differently from other model representations, the peculiarity of wQISA is its capability of dealing with data affected by noise and outliers. This fact reflects on the measurements we can adopt to analyse the quality of the data approximation: indeed, it is not important how much the wQISA interpolates the original data rather it remains in a reasonable approximation range. To the best of our knowledge, a single performance measure able to capture such complex information does not exist; therefore, we will analyse the wQISA output with a number of measures, each one able to highlight different approximation aspects.
\begin{itemize}
    \item When $N$ observations $Y_i$ are approximated by $\hat{Y}_i$, two popular measures of the statistical dispersion are the \emph{Mean Squared Error} (MSE) $$MSE=\dfrac{1}{N}\sum_{i=1}^N (Y_i -\hat{Y}_i)^2$$ and the \emph{Mean Absolute Error} (MAE) $$MAE=\dfrac{1}{N}\sum_{i=1}^N |Y_i -\hat{Y}_i|.$$ Although the MSE and MAE quantities are sample dependent and highly affected by data perturbation, they offer a very intuitive quantification of how close a point cloud and its approximation are.
    \item The \emph{Hausdorff distance} is a well-known distance between two sets of points and applies for point clouds in all dimensions. In particular, we consider the Directed Hausdorff distance \cite{DezaDeza} from the points $a\in A\subset\mathbb{R}^t$ to the points $b\in B\subset\mathbb{R}^t$ as follows: 
	$$d_{dHaus}(A,B)=\max\limits_{a\in A}\min\limits_{b\in B}d(a,b),$$ with $d$ the Euclidean distance. In order to have a coherent distance evaluation through models of different size, we normalize $d_{dHaus}$ with respect to the diameter of the point cloud.

    \item The \emph{Jaccard index} (also known as intersection over union) quantitatively estimates how two sets overlap. It is has been previously adopted to measure the performance of curve recognition methods for images \cite{Taha2015MetricsFE} and 3D models \cite{shrec19}. The Jaccard index between two point sets $A$ and $B$ is defined as: $$Jaccard(A,B)=\dfrac{|A \cap B|}{|A \cup B|},$$ where $|\cdot|$ denotes here the number of elements. The Jaccard index varies from 0 to 1, the higher the better. In our context, it can be adapted to the ratio of elements of the original point cloud that lie on the standard error bands of Equation \eqref{equation:se_band}.
\end{itemize}

\subsection{Curve approximation}
We consider a $512\times512$ axial X-ray CT slice of a human lumbar vertebra (see Figure \ref{figure:XrayCT}(a)). First, we apply an edge detection technique, to detect the set of edge points. In this specific example, we adopt the Canny edge detector \cite{Canny:1986}, others methods could be applied too. We select a bounding box for the point cloud, which is then partitioned in smaller sub-regions (see Figure \ref{figure:XrayCT}(b)). Lastly, we apply our technique to each sub-region to obtain a global approximation  (see Figure \ref{figure:XrayCT}, right). Here, a $1$-NN weight is set as the number of points is relatively small. Uniform knot vectors are considered as they produce reasonable approximations. The number of B-splines is chosen, in each sub-region, by a Leave-One-Out cross-validation \cite{Hastie2009}.  Interpolating conditions are imposed at the boundaries in order to have a more natural $C^1$ continuity (see Figure \ref{figure:XrayCT}(c)). Notice that the shape of the vertebra is correctly preserved in the passage from the image to the final approximation.

\newcolumntype{S}{>{\centering\arraybackslash} m{1.3in} }
\begin{figure}[htp]
\begin{center}
\begin{tabular}{S S S}
\includegraphics[scale=0.34,trim={2.5cm 0 2cm 0},clip]{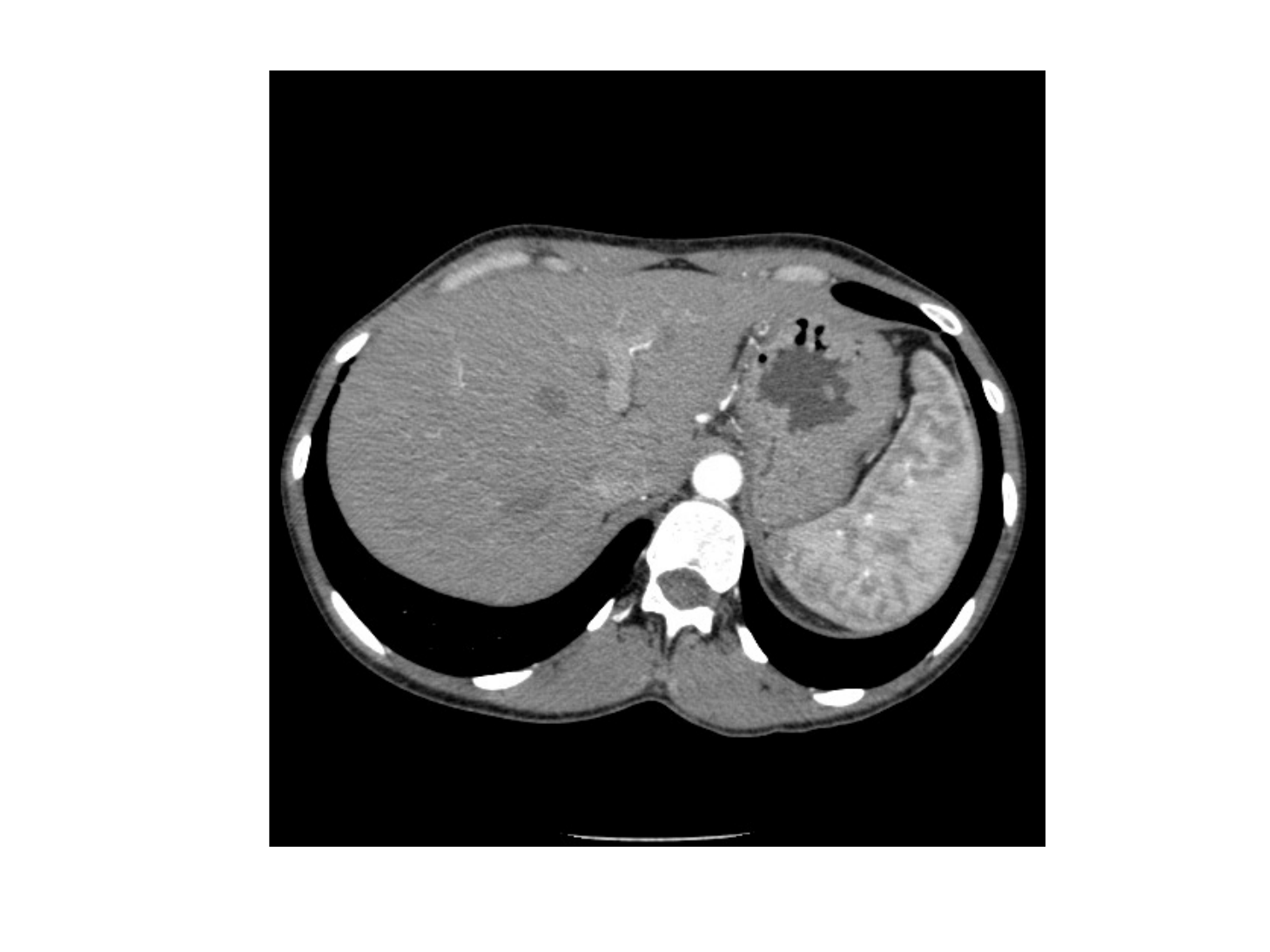}
&
\includegraphics[scale=0.37,trim={3cm 0 3cm 0},clip]{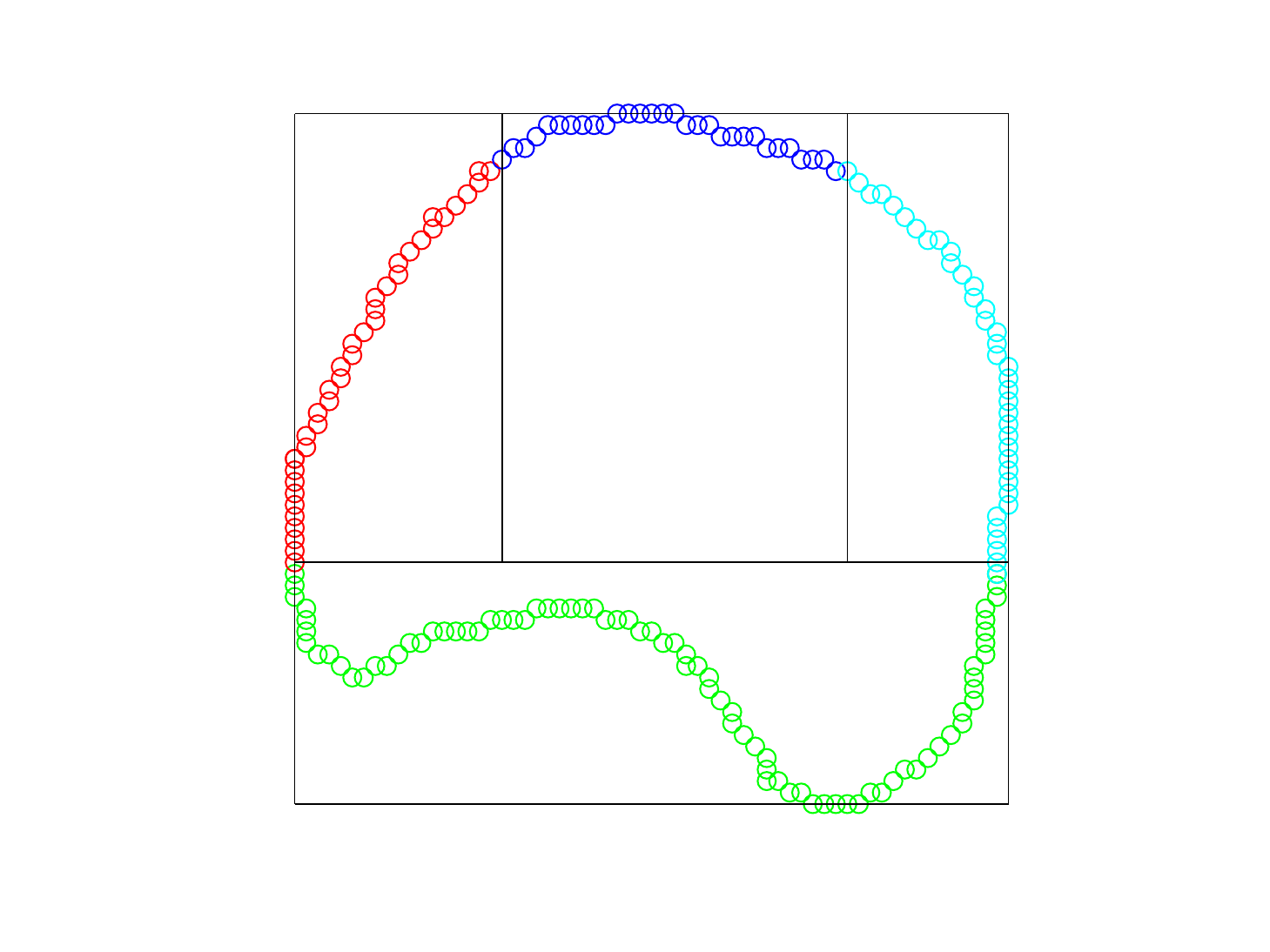}
&
\includegraphics[scale=0.34,trim={2cm 0 2cm 0},clip]{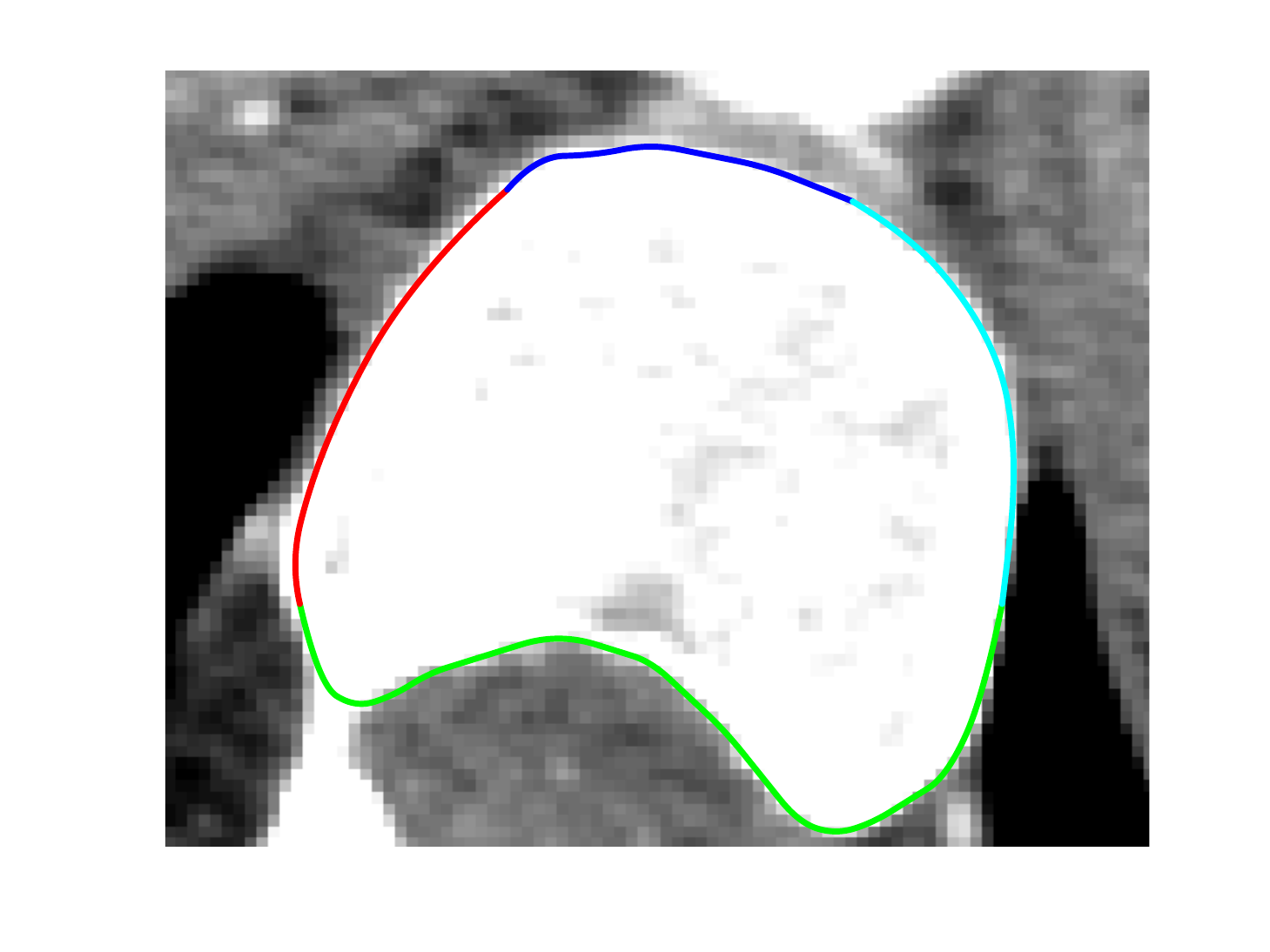} \\
(a) & (b) & (c) \\
\end{tabular}
\end{center}
\caption{X-ray CT slice. In (a), the original image is shown. Figure (b) displays the edge points and the chosen partition: V1 in green, V2 in red, V3 in blue and V4 in light blue. In (c), the piecewise defined curve is superimposed to an enlargement of the original image.}
\label{figure:XrayCT}
\end{figure}

Figure \ref{figure:eye} shows an example of eye contour approximation from 3D models. We consider a fragment of a votive statue \cite{vassos1991} stored in STARC repository\footnote{http://public.cyi.ac.cy/starcRepo/} at The Cyprus Institute and extract the eye contours by filtering the point cloud through the values of the mean curvature values and clustering \cite{Torrente2018}. Each contour is projected onto its regression plane and then locally approximated. We here test a $k$-NN weight, with $k$ to be assessed from patch to patch. Knot vectors are again assumed to be uniform. For each eye profile, two curves are detected; the extrema knots of the two curves are fixed to be the same and are automatically selected as the leftmost and rightmost points of the whole profile. Notice that with these choices our approach is also able to fill the gaps in a reasonable way.

\begin{figure}[!h]
\begin{center}
\begin{tabular}{cc}
\includegraphics[scale=1.2,trim={5cm 0 5cm 0}]{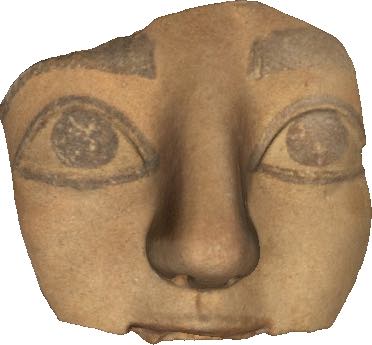}
&
\includegraphics[scale=1.2,trim={0cm 0 0cm 0}]{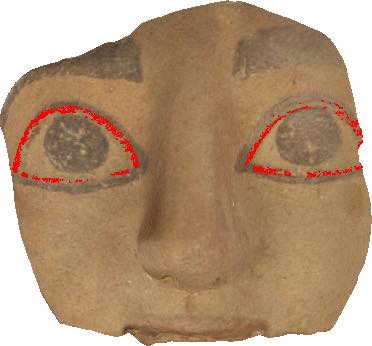} \\
(a) & (b) \\
\includegraphics[scale=0.35,trim={0cm 0 0cm 0}]{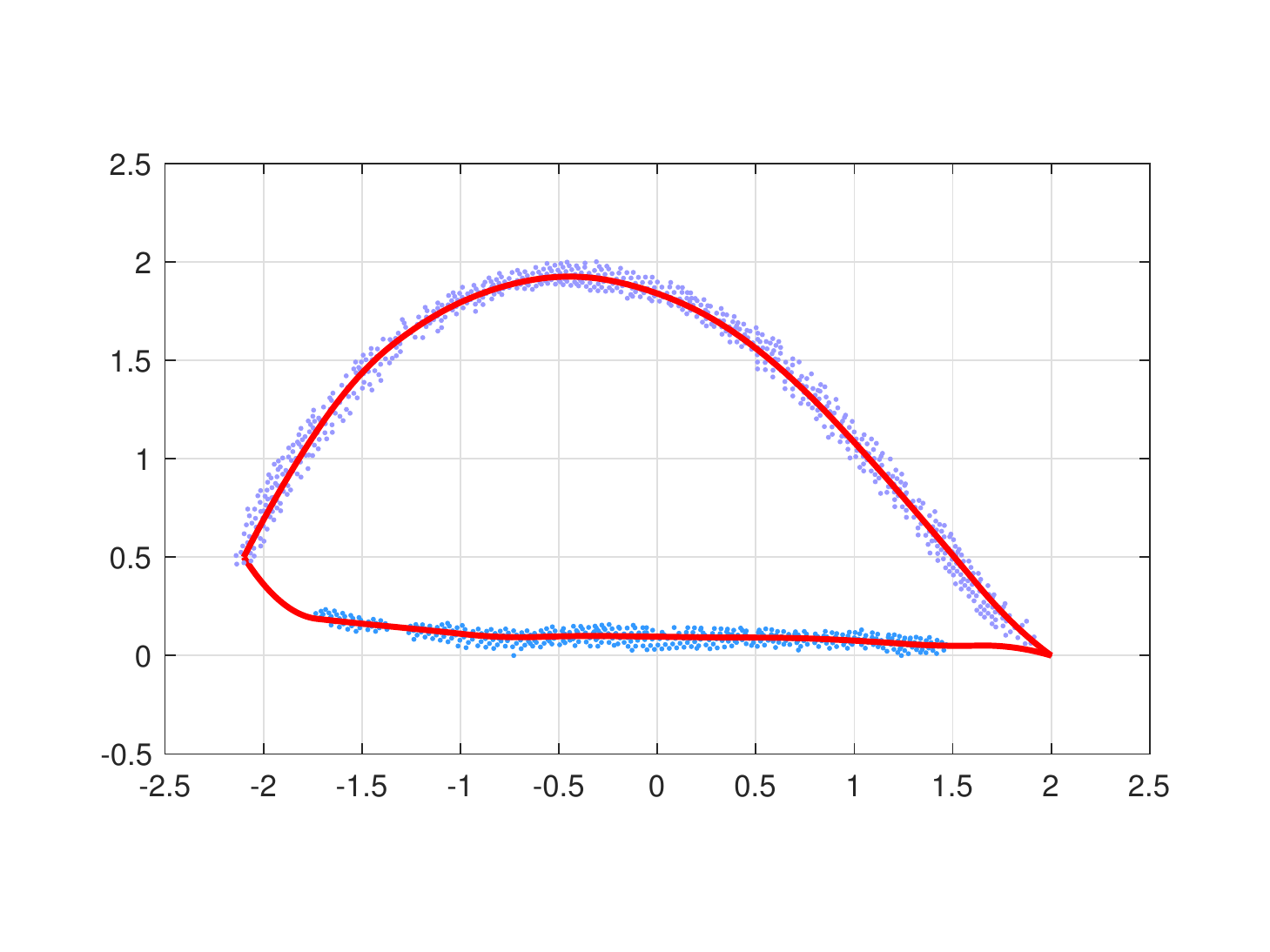}
&
\includegraphics[scale=0.35,trim={0cm 0 0cm 0}]{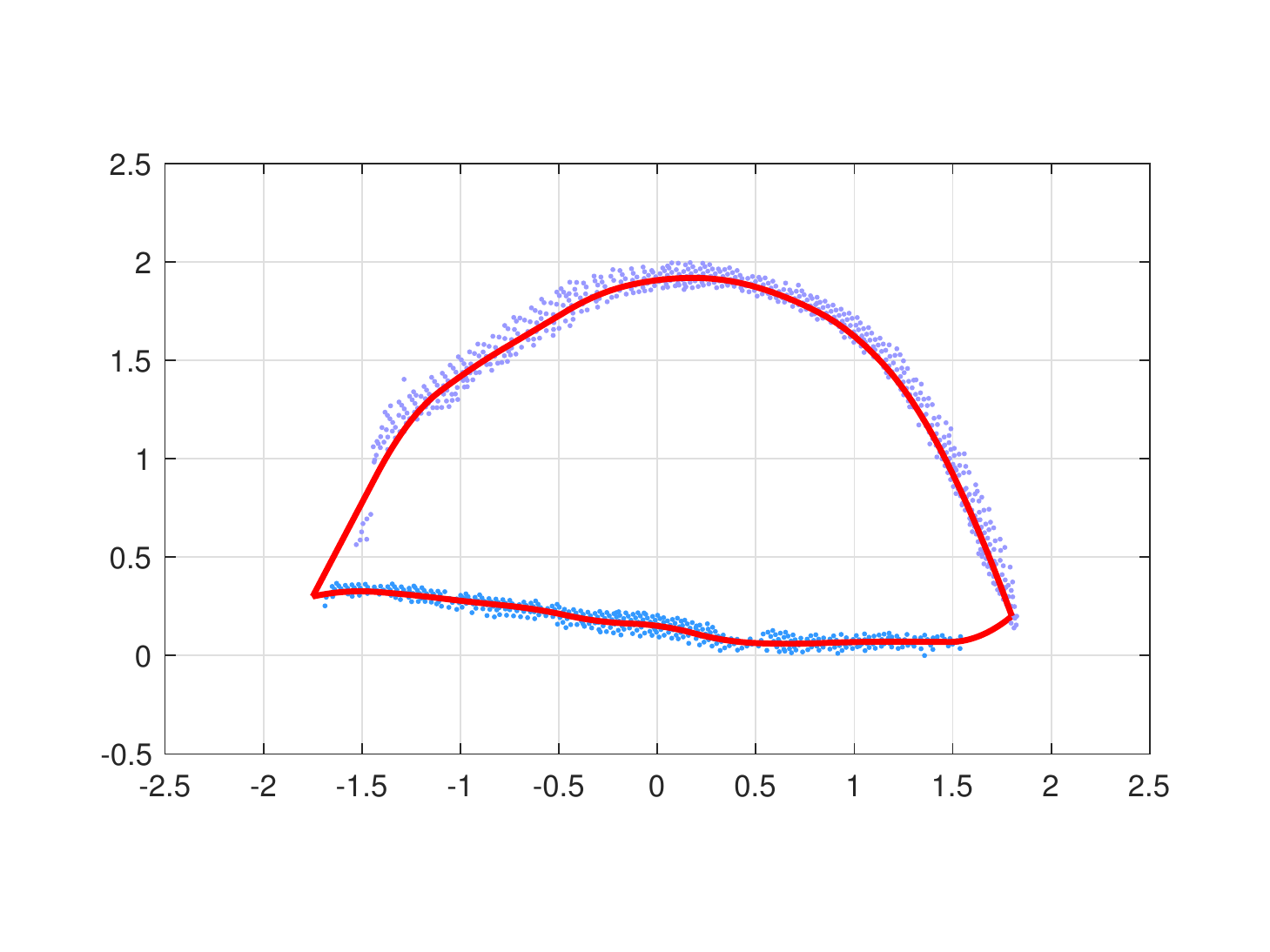} \\
(c) & (d)
\end{tabular}
\end{center}
\caption{Approximation of the eye contour on a fragment of archaeological artifact. The statue (a) is first preprocessed to filter the eye contours points (b). Then, each point cloud is locally approximated. Here the points are clustered into: LE1 (left eye, light blue), LE2 (left eye, light purple), RE1 (right eye, light blue), RE2 (right eye, light purple).}
\label{figure:eye}
\end{figure}

Table \ref{tab:curve_approximations} reports the values of the parameters $n$ and $k$  that best approximate the original curve segments and the corresponding error measures for the wQISA approximations.

\begin{table}[h!]
\centering
\caption{Parameters and accuracy measures for the curve fitting examples. For each cluster of points we report: the sample size, the number of B-splines $n$, the tuning parameter $k$ for the $k$-NN weight, the Mean Absolute Error (MAE), the Root Mean Squared Error (RMSE), the Jaccard index and the normalized Hausdorff distance. Parameters with asterisks are set by user.} 
\begin{tabular}{c c c c c c c c c}
& \multicolumn{4}{c}{\textbf{Lumbar Vertebra}}
& \multicolumn{2}{c}{\textbf{Left Eye}}
& \multicolumn{2}{c}{\textbf{Right Eye}} \\ 
\cmidrule(l){2-9}
  & V1 & V2 & V3 & V4 & LE1 & LE2 & RE1 & RE2\\ 
\midrule 
sample size & 82 & 38 & 30 & 38 & 422 & 730 & 428 & 638 \\ 
$n$ & 20 & 6 & 8 & 7 & 12 & 12 & 8 & 12 \\ 
$k$ & $1^\ast$ & $1^\ast$ & $1^\ast$ & $1^\ast$ & 5 & 5 & 5 & 5\\ 
\midrule
MAE & 0.656 & 0.3323 & 0.278 & 0.292 & 0.025 & 0.052 & 0.025 & 0.043 \\
RMSE & 0.898 & 0.418 & 0.378 & 0.379 & 0.030 & 0.074 & 0.030 & 0.079 \\
Jaccard & 0.988 & 1.000 & 1.000 & 1.000 & 0.995 & 1.000 & 1.000 & 1.000 \\ 
Hausdorff & 0.014 & 0.016 & 0.011 & 0.012 & 0.010 & 0.021 & 0.017 & 0.041\\ 
\midrule
\end{tabular}
\label{tab:curve_approximations}
\end{table}

\subsection{Surface approximation}
A simulation on terrain data is shown in Figure \ref{figure:portofino}. The data are part of the Liguria-LAS dataset adopted as testbed in the iQmulus project \cite{iQmulus}, and come from a LIDAR dataset with spatial resolution of one meter. 
The area here selected contains 379.831 points. It is located in the Liguria region, in the north-west of Italy. The Liguria morphology, with several small catchments and even small rivers, is very challenging for the approximation methods to capture and preserve the most important and potentially critical characteristics \cite{PATANE_2017}.
The data are obtained with multiple swipes by airplane lidar acquisition. Some points come from multiple laser positions and therefore the same point can have multiple elevation values.
In addition, since the data were only minimally post-processed to convert them to .las format, they contain also noise and outliers. 
In this example, we choose a $C^1$ (bi-)quadratic spline approximation because it is smooth enough to represent smooth terrains in a good way. We consider an Inverse Distance Weight (IDW), defined by:
\begin{equation}
\label{equation:Inverse_Distance_Weight}
    w_{(u,v)}(x,y):=
    \begin{cases}
    \dfrac{1}{||(x,y)-(u,v)||_2}, & \text{ if } |C_{(u,v)}|=0 \\
    \begin{cases}
    \dfrac{1}{|C_{(u,v)}|}, &   \text{ for all } (x,y)=(u,v) \\
    0,                      &   \text{ else }    
    \end{cases}
    , & \text{ if } |C_{(u,v)}|\ne 0
    \end{cases}
\end{equation}
where $|C_{(u,v)}|:=\{(x,y,z)\in\mathcal{P} \text{ s.t. } (x,y)=(u,v)\}$.
The IDW assigns greater influence to the points the closest to the knot averages and hence the most significant for the terrain approximation. The uniform knot vectors define in the final approximation $1024$ B-splines in both directions and are chosen such that the MSE for the relative punctual error of each element is lower than $0.05$ (which correspond to 0.05\% of deviation).

\begin{figure}[!ht]
\begin{center}
\begin{tabular}{cc}
\includegraphics[scale=0.45]{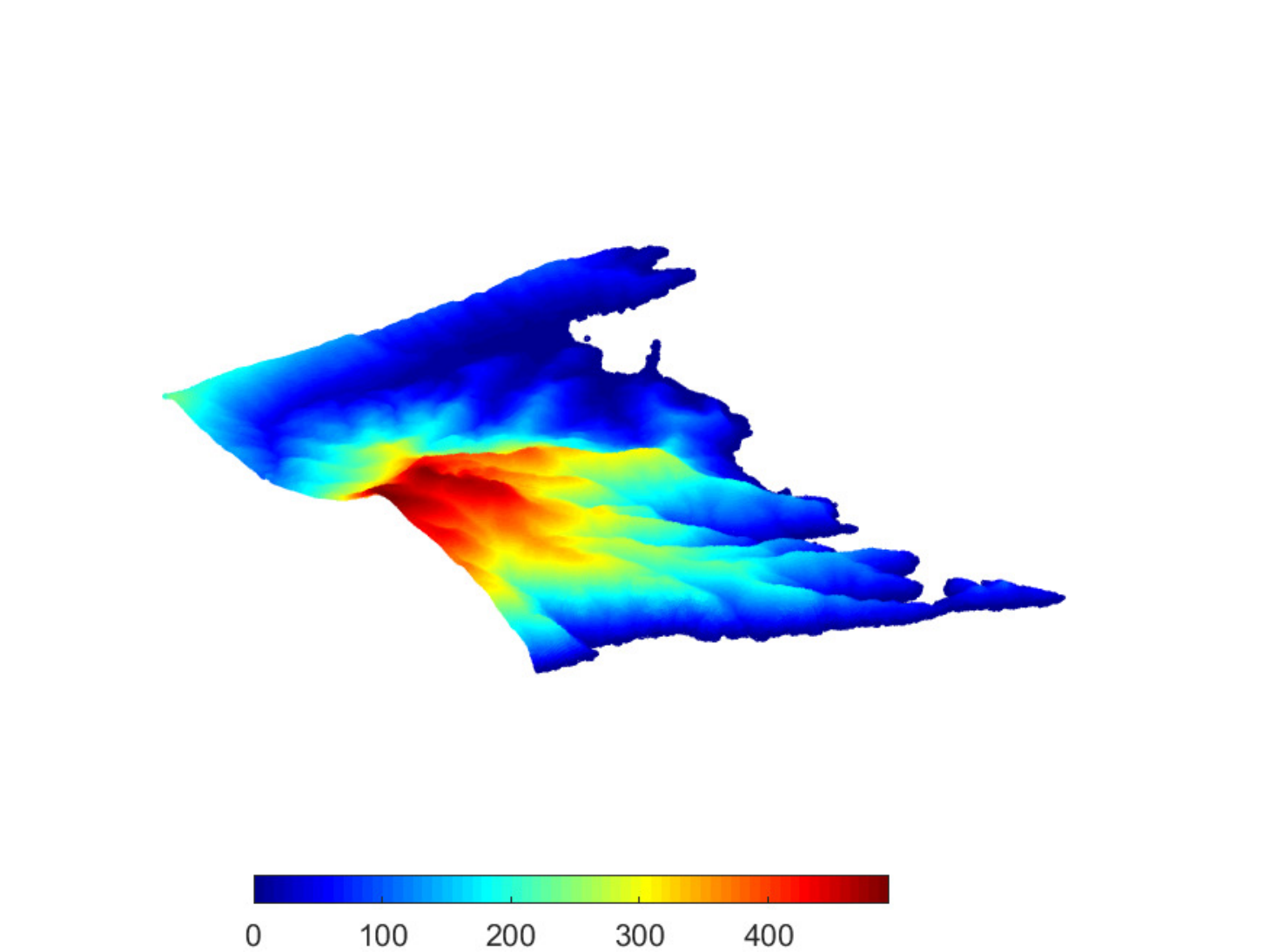} &
\includegraphics[scale=0.45]{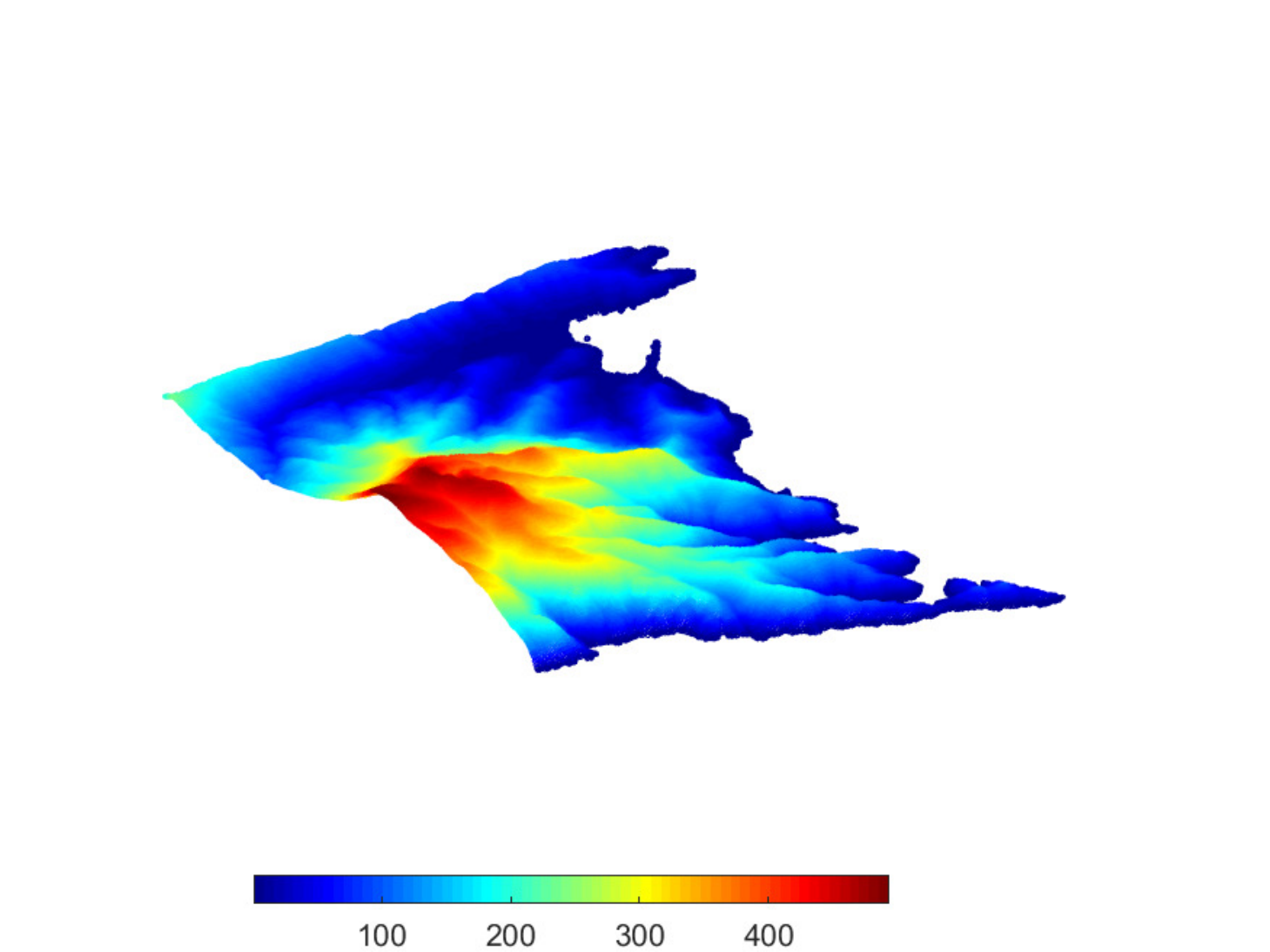} \\
(a) & (b)\\
\end{tabular}
\begin{tabular}{c}
 \includegraphics[scale=0.4, angle=0]{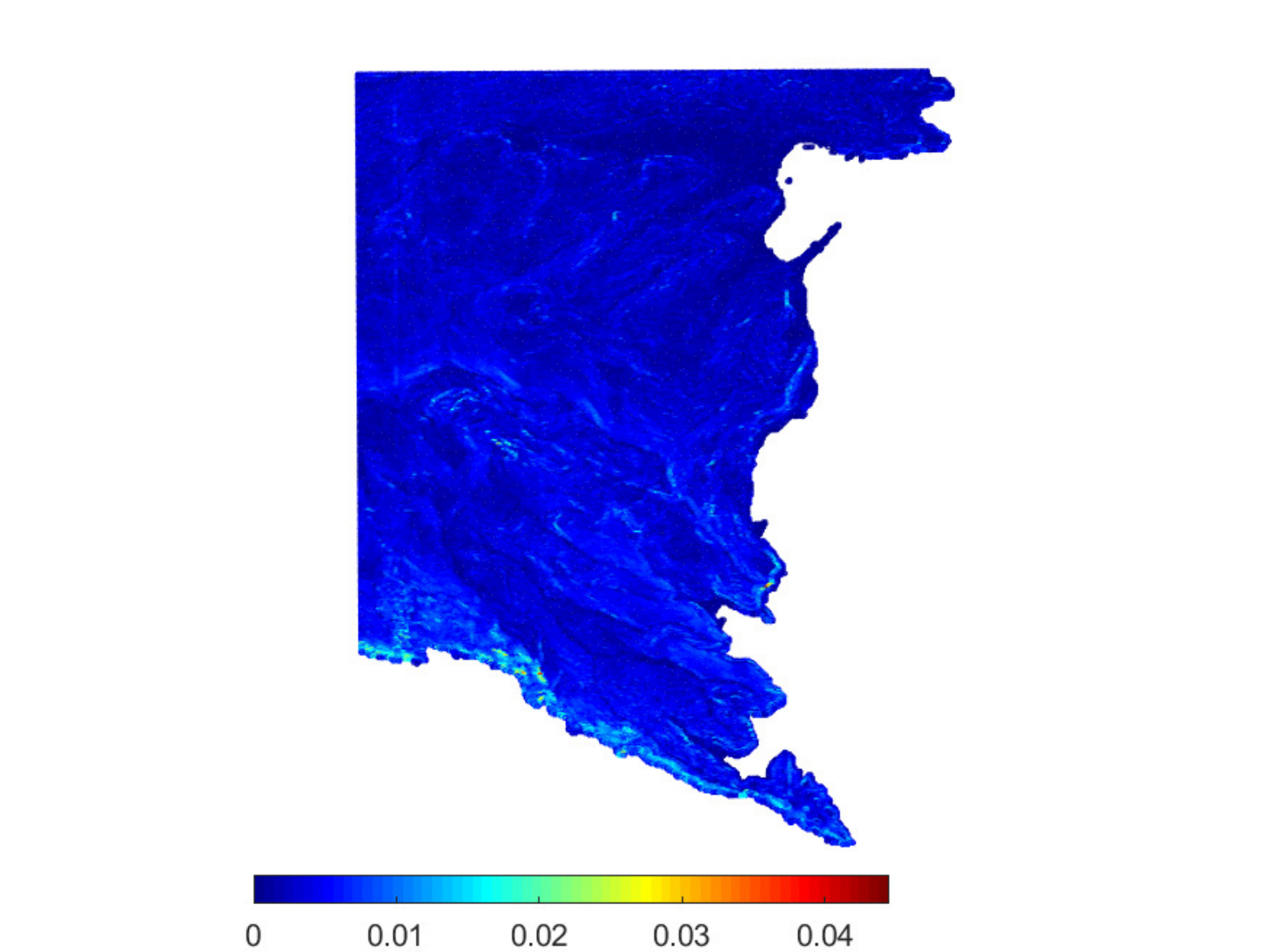} \\
(c)\\
\end{tabular}
\caption{Portofino, Liguria, Italy. A data point cloud from the given region of interest (a) is approximated via an IDW weight (b). The colors represent the elevation and vary from blue (low elevation) to red (high elevation). A graphical representation of the punctual error, normalized by the maximum elevation, is provided in (c). The statistics for the error are: min=$0.0000$, max=$0.0445$, mean=$0.0021$, median=$0.0017$, RMSE=$0.0029$ and std=$0.0019$.}
\label{figure:portofino}
\end{center}
\end{figure}

The method has been also tested for the approximation of the boundary of 3D models. As currently stated, wQISA is suitable to approximate surface portions that can be represented in a local Cartesian coordinate system in the form $z=f(x,y)$. Therefore, the object surface needs a subdivision into charts, for instance following the approaches in \cite{Ohtake:2003,SORGENTE2018}. Once the charts have been obtained, we compute the desired representation adopting as $z$ value the height value of the chart with respect to its best fitting regression plane \cite{Torrente2018}. Figure \ref{figure:vase} visually show some details of two wQISA approximations for 3D points clouds: the surfaces in the boxes approximate  the regions pointed by the (light blue) lines. These models come from the Visionair Shape Repository, VSR \cite{VISIONAIR}. Given the low level of noise, a pure $1$-NN weight function is here tested. The approximation shows a correct recovery of the main details of the artefact. Nevertheless, the feeble details are lost as an effect of the smoothing effect of this weight function.

\begin{figure}[!ht]
\begin{center}
\begin{tabular}{cc}
\includegraphics[scale=0.43]{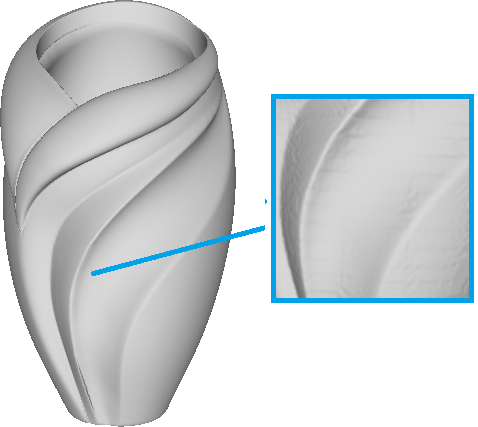} & \includegraphics[scale=0.4]{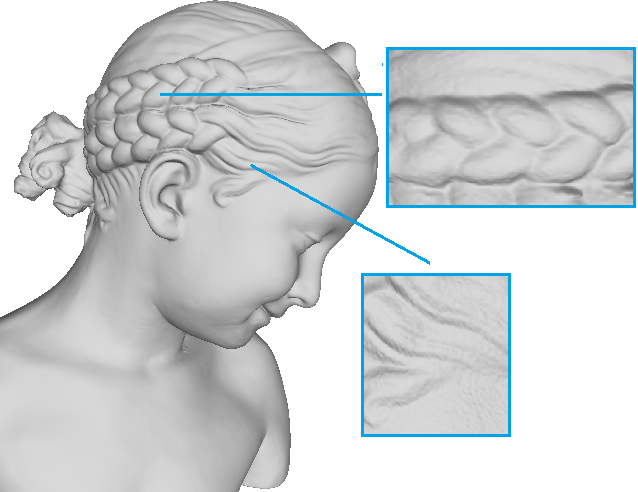}\\
(a) & (b)\\
\end{tabular}
\caption{Examples on two 3D models. For each model we highlight some details of the wQISA approximation computed by a $1$-NN weight function. The statistics of the relative punctual error are: for the vase, MSE=$4.1884e-06$ and std=$0.0015$; for the curl, MSE=$4.1493e-06$ and std=$0.0016$; for the tress, MSE=$3.7918e-05$ and std=$0.0057$.}
\label{figure:vase}
\end{center}
\end{figure}

\subsection{Approximation of surface properties}
As a further case study, we propose the approximation of a precipitation event over the Liguria region. To this purpose we consider an event occurred between January 16 and 20,~2014, which was responsible of heavy rain for about five days over all the Liguria region. The data we are considering were gathered from rain gauges maintained by Regione Liguria.  The network is spread over the whole region, with ~$143$ measure stations. These data come from the use case adopted for the comparison of six rainfall precipitation methods in \cite{PATANE_2017}.

Here, we compare wQISA with $k$-NN weight functions with two other methods: radial basis functions (RBF) with Gaussian kernel, as considered in \cite{PATANE_2017}, and the Multilevel B-splines Approximation (MBA) \cite{LeeMBA}. In the RBF implementation a global support \cite{Carr:2001} is adopted (all the 143 rain samples are considered) and a direct solver is applied to the linear system, which is symmetric and positive-definite. The MBA approximation is obtained with the default settings of the implementation of the Geometry Group at SINTEF Digital, which is freely available at: \url{https://github.com/orochi663/MBA}.

A quantitative comparison is provided in Table \ref{table:comparison_rains} and computed by performing $5$ times a $5$-fold cross-validation on each method. For more details, we refer once again the reader to \cite{Hastie2009} (chapter 7). The optimal parameters for a k-NN wQISA are chosen by minimizing the average MSE and are: $k=9$, with $10$ inner knots for each direction. In Figure \ref{fig:precipitation} we sample the precipitation fields approximated with the three methods in a set of points, representing the Liguria region. Although our approximation looks smoother and less detailed, it has in practice a better generalization performance as a learning method -- that is a better prediction capability on independent test data. An implementation of the wQISA method for rainfall data with the choice of the optimal values for the $k$ parameter and the cross-validation tests reported in Table~\ref{table:comparison_rains} is freely availale at: https://github.com/rea1991/wQISA.

\begin{table}
\begin{center}
\caption{Statistics for the error distribution of the cross validation.\label{table:comparison_rains}}
\begin{tabular}{m{0.5in} m{0.5in} m{0.5in} m{0.5in} m{0.5in} m{0.5in} m{0.5in}}
\midrule
Method & \vtop{\hbox{\strut Min}\hbox{\strut [mm]}} & \vtop{\hbox{\strut Max}\hbox{\strut [mm]}} & \vtop{\hbox{\strut Mean}\hbox{\strut [mm]}} & \vtop{\hbox{\strut Median}\hbox{\strut [mm]}} & \vtop{\hbox{\strut Std}\hbox{\strut [mm]}} & \vtop{\hbox{\strut MSE}\hbox{\strut [mm\textsuperscript{2}]}} \\[-2ex]
\midrule 
RBF   & 0.0317 & 2.9363 & 1.0903 & 1.0070 & 0.7830 & 1.7973 \\ 
MBA   & 0.0341 & 3.3489 & 1.1667 & 1.0243 & 0.8767 & 2.1969\\ 
wQISA & 0.0471 & 2.8293 & 0.9883 & 0.9013 & 0.6885 & 1.4657\\ 

\midrule
\end{tabular}
\end{center}
\end{table}

\begin{figure}[!ht]
\begin{center}
\begin{tabular}{S S S}
\includegraphics[scale=0.28,trim={4.1cm 3cm 1.25cm 2cm},clip]{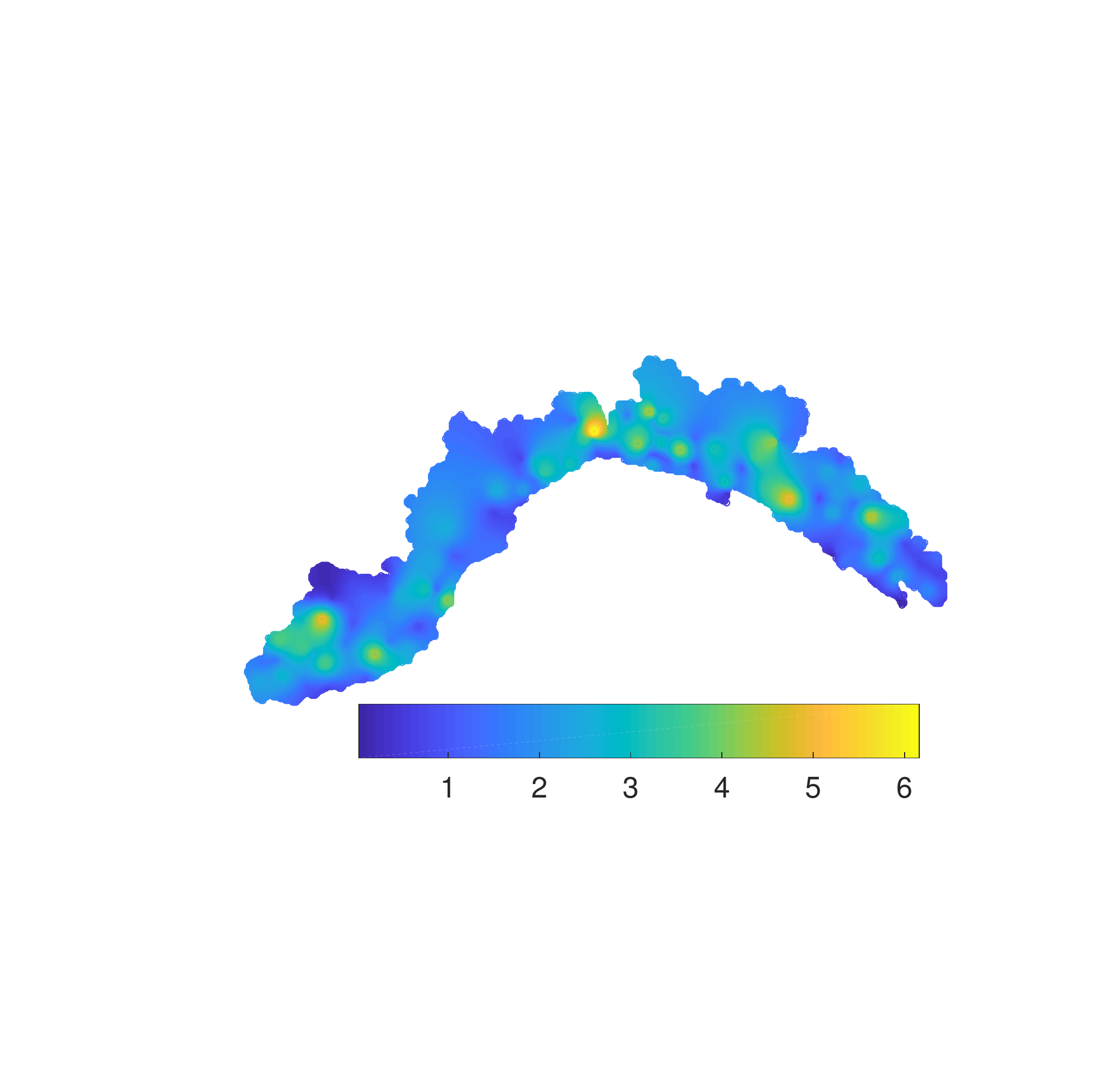}
&
\includegraphics[scale=0.28,trim={4.1cm 3cm 1.25cm 2cm},clip]{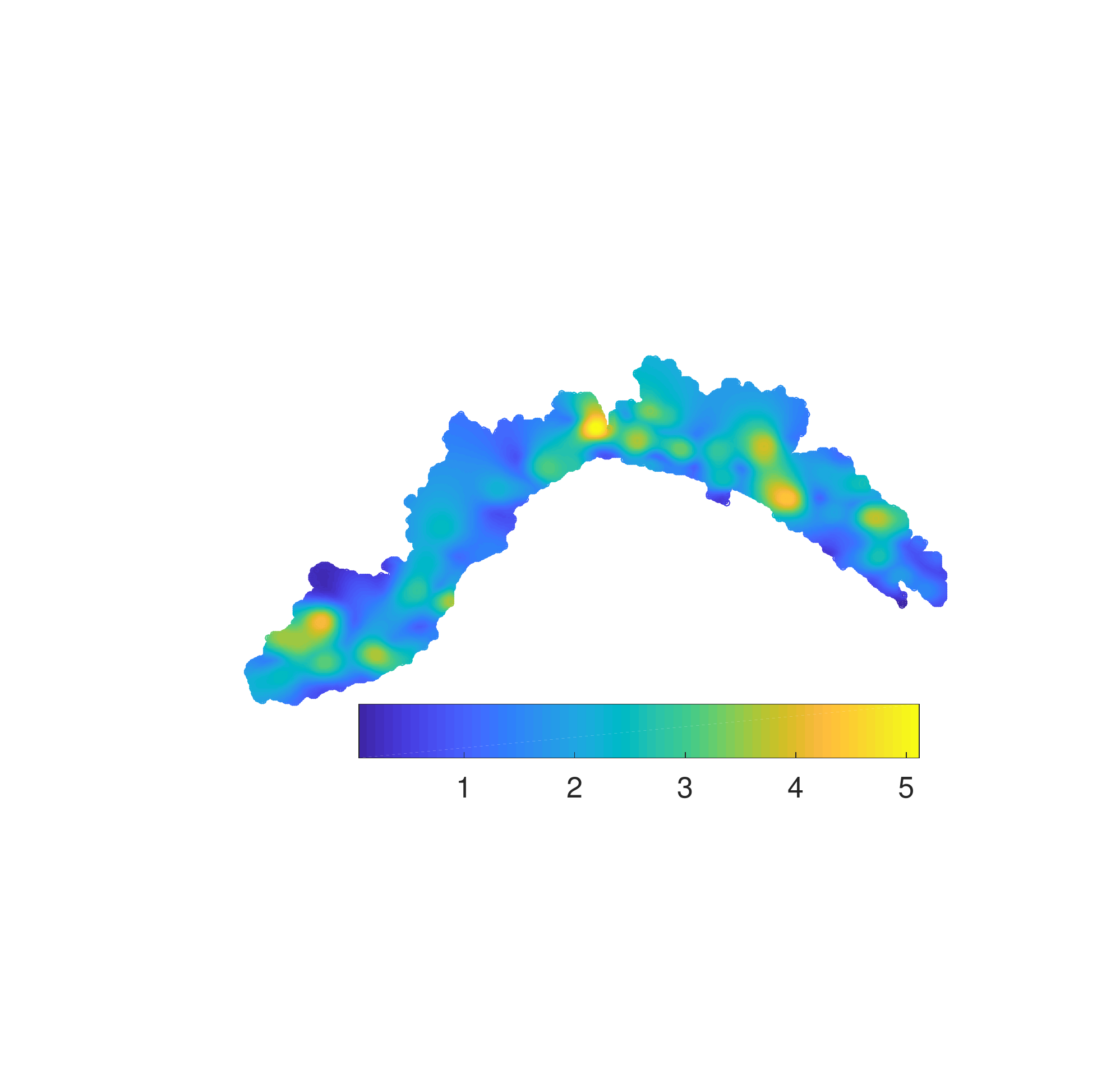}
&
\includegraphics[scale=0.29,trim={4.1cm 3cm 1.25cm 2cm},clip]{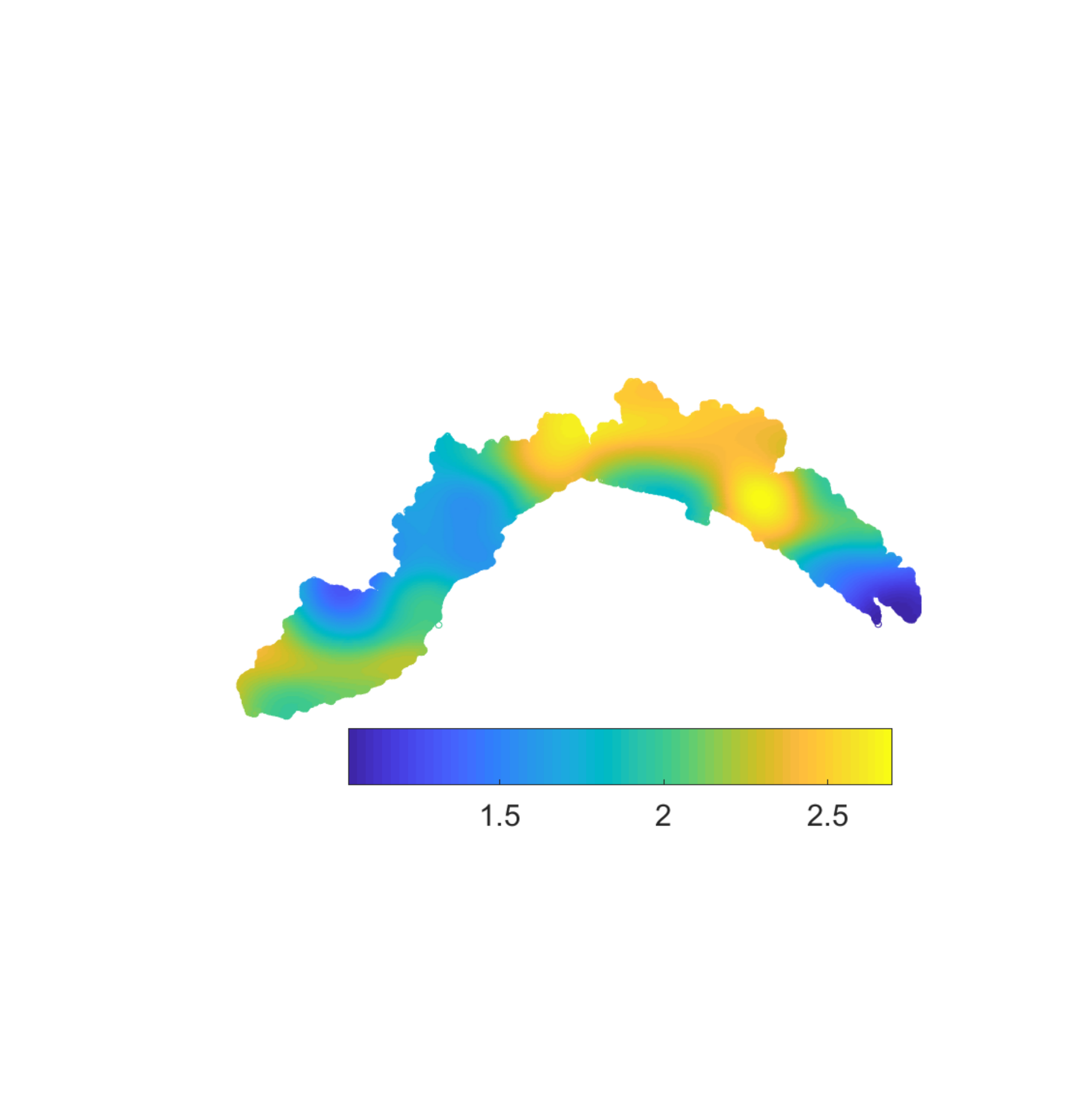} \\
(a) & (b) & (c)
\end{tabular}
\caption{Rainfall approximation with RBF (a), MBA (b) and wQISA (c).}
\label{fig:precipitation}
\end{center}
\end{figure}

\section{Concluding remarks and future perspectives}
\label{sec:conclusion}
We defined a novel quasi-interpolant reconstruction technique (wQISA), specifically designed to handle large and noisy point sets, even when equipped with outliers. 
The robustness and the versatility of the method are theoretically discussed from the point of view of numerical analysis (Sections \ref{sec:properties}) and probability theory (Section \ref{sec:prob_interp}). Numerical examples are provided in Section \ref{sec:simulations}. 

In this work we presented a quasi-interpolant scheme that applies to point clouds even equipped with noise and outliers. Our definition of the control point estimators combines computational efficiency with the possibility to work with different types of noise, as well as a reduced sensitivity to outliers. The computational complexity is, in fact, comparable to that of a weighted average. We gave evidence of the approximation effectiveness of the method over a wide range of real data and application domains.

As a further development of the method, we think it is possible to extend wQISA to more general refinement schemes, for instance opportunely selecting the point neighbours \cite{Lenarduzzi1993}, such as in the case of LR B-splines or THB-splines \cite{Dokken2013,Johannessen2015}. 
This is particularly relevant because these locally refining schemes naturally deal with isogeometric computations and simulation and offers the valuable perspective to practically adopt this work for Computer Aided Design and Manufacturing (CAD/CAM), Finite Element Analysis and IsoGeometric Analysis \cite{Johannessen2014,Giannelli2016,Occelli2019}.

\newpage
\section*{Acknowledgments}
\begin{sloppypar}
The authors thank Dr. Bianca Falcidieno and Dr. Michela Spagnuolo for the fruitful discussions; Dr. Oliver J. D. Barrowclough, Dr. Tor Dokken and Dr. Georg Muntingh for their concern as supervisors; the reviewers, for their positive suggestions, which have significantly contributed to extend the references.

Funding: this project has received funding from the European Union's Horizon 2020 research and innovation programme under the Marie Sk\l{}odowska-Curie grant agreement No 675789. This work has been co-financed by the ``POR FSE, Programma Operativo Regione Liguria" 2014-2020, No RLOF18ASSRIC/68/1, and
partially developed in the CNR-IMATI activities DIT.AD021.080.001 and DIT.AD009.091.001.

This is a pre-print of an article published in Numerical Algorithms. The final authenticated version is available online at: https://doi.org/10.1007/s11075-020-00989-4”.
\end{sloppypar}

\bibliographystyle{ieeetr}      
\bibliography{main.bbl}

\begin{thebibliography}{10}

\bibitem{splineimages}
T.~Briand and P.~Monasse, ``{Theory and Practice of Image B-Spline
  Interpolation},'' {\em {Image Processing On Line}}, vol.~8, pp.~99--141,
  2018.

\bibitem{Farin:1993}
G.~Farin, {\em Curves and Surfaces for Computer Aided Geometric Design (3rd
  Ed.): A Practical Guide}.
\newblock San Diego, CA, USA: Academic Press Professional, Inc., 1993.

\bibitem{hughes}
T.~J.~R. Hughes, J.~A. Cottrell, and Y.~Bazilevs, ``Isogeometric analysis:
  {CAD, finite elements, NURBS,} exact geometry and mesh refinement,'' {\em
  Computer Methods in Applied Mechanics and Engineering}, vol.~194, no.~39,
  pp.~4135 -- 4195, 2005.

\bibitem{Boffi2013}
D.~Boffi, F.~Brezzi, and M.~Fortin, {\em Mixed Finite Element Methods and
  Applications}, vol.~44 of {\em Springer Series in Computational Mathematics}.
\newblock Springer, 2013.

\bibitem{Schumaker2007}
L.~L. Schumaker, {\em Spline Functions: Basis Theory}.
\newblock Cambridge: Cambridge University Press, third~ed., 2007.

\bibitem{Gregory:1986}
J.~A. Gregory, ``Shape preserving spline interpolation,'' {\em Computer Aided
  Design}, vol.~18, pp.~53--57, Jan. 1986.

\bibitem{Buffa2012}
A.~Buffa and G.~Sangalli, {\em IsoGeometric Analysis: A New Paradigm in the
  Numerical Approximation of {PDEs}}.
\newblock New York, NY, USA: Springer, second~ed., 2012.

\bibitem{BRACCO2016}
C.~Bracco, T.~Lyche, C.~Manni, F.~Roman, and H.~Speleers, ``Generalized spline
  spaces over {T}-meshes: Dimension formula and locally refined generalized
  {B}-splines,'' {\em Applied Mathematics and Computation}, vol.~272, pp.~187
  -- 198, 2016.

\bibitem{nurbs}
L.~Piegl and W.~Tiller, {\em The NURBS Book}.
\newblock New York, NY, USA: Springer-Verlag, 1996.

\bibitem{Sederberg2003}
T.~W. Sederberg, J.~Zheng, A.~Bakenov, and A.~Nasri, ``{T-Splines and
  T-NURCCS},'' {\em {ACM Transactions on Graphics}}, vol.~22, no.~3,
  pp.~477--484, 2003.

\bibitem{Forsey1988}
D.~R. Forsey and R.~H. Bartels, ``Hierarchical {B}-spline refinement,'' {\em
  {ACM SIGGRAPH Computer Graphics}}, pp.~205--212, 1988.

\bibitem{Dokken2013}
T.~Dokken, T.~Lyche, and K.~F. Pettersen, ``Polynomial splines over locally
  refined box-partitions,'' {\em {Computer Aided Geometric Design}}, vol.~30,
  no.~3, pp.~331--356, 2013.

\bibitem{Giannelli2016}
C.~Giannelli, B.~J{\"u}ttler, S.~K. Kleiss, A.~Mantzaflaris, B.~Simeon, and
  J.~\v{S}peh, ``{THB}-splines: An effective mathematical technology for
  adaptive refinement in geometric design and isogeometric analysis,'' {\em
  Computer Methods in Applied Mechanics and Engineering}, vol.~299, pp.~337 --
  365, 2016.

\bibitem{Cao:2017}
L.~Cao, ``Data science: A comprehensive overview,'' {\em ACM Computing
  Surveys}, vol.~50, pp.~43:1--43:42, June 2017.

\bibitem{long2019}
X.~Y. Long, D.~L. Mao, C.~Jiang, F.~Y. Wei, and G.~J. Li, ``Unified uncertainty
  analysis under probabilistic, evidence, fuzzy and interval uncertainties,''
  {\em Computer Methods in Applied Mechanics and Engineering}, vol.~355, pp.~1
  -- 26, 2019.

\bibitem{CHENG1991}
F.~Cheng and B.~A. Barsky, ``Interproximation: interpolation and approximation
  using cubic spline curves,'' {\em Computer-Aided Design}, vol.~23, no.~10,
  pp.~700 -- 706, 1991.

\bibitem{ANILE2000}
A.~M. Anile, B.~Falcidieno, G.~Gallo, M.~Spagnuolo, and S.~Spinello, ``Modeling
  unc ertain data with fuzzy {B}-splines,'' {\em Fuzzy Sets and Systems},
  vol.~113, no.~3, pp.~397 -- 410, 2000.

\bibitem{deboor73}
C.~de~Boor and G.~J. Fix, ``Spline approximation by quasi-interpolants,'' {\em
  Journal of Approximation Theory}, vol.~8, no.~1, pp.~19 -- 45, 1973.

\bibitem{Sablonniere05}
P.~Sablonni{\`e}re, ``Recent progress on univariate and multivariate polynomial
  and spline quasi-interpolants,'' in {\em Trends and Applications in
  Constructive Approximation} (D.~H. Mache, J.~Szabados, and M.~G. de~Bruin,
  eds.), (Basel), pp.~229--245, Birkh{\"a}user Basel, 2005.

\bibitem{Buhmann:1993}
M.~D. Buhmann, ``On quasi-interpolation with radial basis functions,'' {\em
  Journal of Approximation Theory}, vol.~72, no.~1, pp.~103 -- 130, 1993.

\bibitem{Buhmann:2015}
M.~D. Buhmann and F.~Dai, ``Pointwise approximation with quasi-interpolation by
  radial basis functions,'' {\em Journal of Approximation Theory}, vol.~192,
  pp.~156 -- 192, 2015.

\bibitem{JIANG2011}
Z.-W. Jiang, R.-H. Wang, C.-G. Zhu, and M.~Xu, ``High accuracy multiquadric
  quasi-interpolation,'' {\em Applied Mathematical Modelling}, vol.~35, no.~5,
  pp.~2185 -- 2195, 2011.

\bibitem{Patrizi2020}
F.~Patrizi, C.~Manni, F.~Pelosi, and H.~Speleers, ``{Adaptive refinement with
  locally linearly independent LR B-splines: Theory and applications},'' {\em
  Computer Methods in Applied Mechanics and Engineering}, vol.~369, p.~113230,
  2020.

\bibitem{Speleers2016}
H.~Speleers and C.~Manni, ``Effortless quasi-interpolation in hierarchical
  spaces,'' {\em Numerische Mathematik}, vol.~132, pp.~155--184, Jan 2016.

\bibitem{Amir2018}
A.~Amir and D.~Levin, ``Quasi-interpolation and outliers removal,'' {\em
  Numerical Algorithms}, vol.~78, pp.~805--825, July 2018.

\bibitem{Beatson:1992}
R.~Beatson and M.~Powell, ``Univariate multiquadric approximation:
  quasi-interpolation to scattered data,'' {\em Constructive Approximation},
  vol.~8, pp.~275--288, 1992.

\bibitem{BRACCO2017}
C.~Bracco, C.~Giannelli, and A.~Sestini, ``Adaptive scattered data fitting by
  extension of local approximations to hierarchical splines,'' {\em Computer
  Aided Geometric Design}, vol.~52-53, pp.~90 -- 105, 2017.

\bibitem{Gao2018}
W.~Gao and R.~Zhang, ``Multiquadric trigonometric spline quasi-interpolation
  for numerical differentiation of noisy data: a stochastic perspective,'' {\em
  Numerical Algorithms}, vol.~77, pp.~243--259, Jan 2018.

\bibitem{Cohen1980}
E.~Cohen, T.~Lyche, and R.~R. Riesenfeld, ``Discrete {B}-splines and
  subdivision techniques in computer-aided design and computer graphics,'' {\em
  {Computer Graphics \& Image Processing}}, vol.~14, no.~2, pp.~87--111, 1980.

\bibitem{GAO2015696}
W.~Gao and Z.~Wu, ``Approximation orders and shape preserving properties of the
  multiquadric trigonometric {B}-spline quasi-interpolant,'' {\em Computers \&
  Mathematics with Applications}, vol.~69, no.~7, pp.~696 -- 707, 2015.

\bibitem{Lyche2011}
T.~Lyche and K.~M{\o}rken, {\em Spline Methods Draft}, ch.~Tensor Product
  Spline Surfaces, pp.~149--166.
\newblock Apr. 2011.

\bibitem{Wu:1994}
Z.~Wu and R.~Schaback, ``Shape preserving properties and convergence of
  univariate multiquadric quasi-interpolation,'' {\em Acta Mathematicae
  Applicatae Sinica}, vol.~10, pp.~441--446, 1994.

\bibitem{Goodman1989}
T.~N.~T. Goodman, ``Shape preserving representations,'' {\em Mathematical
  Methods in Computer Aided Geometric Design}, pp.~333--351, 1989.

\bibitem{Raffo:2020}
A.~Raffo and S.~Biasotti, ``Data-driven quasi-interpolant spline surfaces for
  point cloud approximation,'' {\em Computers \& Graphics}, vol.~89,
  pp.~144--155, 2020.

\bibitem{Friedman:1977}
J.~H. Friedman, J.~L. Bentley, and R.~A. Finkel, ``An algorithm for finding
  best matches in logarithmic expected time,'' {\em ACM Transaction on
  Mathematical Software}, vol.~3, pp.~209--226, Sept. 1977.

\bibitem{Hastie2009}
T.~Hastie, R.~Tibshirani, and J.~Friedman, {\em The Elements of Statistical
  Learning. Data Mining, Inference, and Prediction}.
\newblock Springer, second~ed., 2009.

\bibitem{Gao_2}
W.~Gao, G.~E. Fasshauer, X.~Sun, and X.~Zhou, ``Optimality and regularization
  properties of quasi-interpolation: both deterministic and stochastic
  perspectives,'' {\em SIAM Journal on numerical analysis (accepted)},
  accepted.

\bibitem{Gao_1}
W.~Gao, Z.~Wu, X.~Sun, and X.~Zhou, ``{Multivariate Monte Carlo approximation
  based on scattered data},'' {\em SIAM Journal on Scientific computing},
  accepted.

\bibitem{DezaDeza}
M.~M. Deza and E.~Deza, {\em Encyclopedia of Distances}.
\newblock Springer Berlin Heidelberg, 2009.

\bibitem{Taha2015MetricsFE}
A.~A. Taha and A.~Hanbury, ``Metrics for evaluating {3D} medical image
  segmentation: analysis, selection, and tool,'' in {\em BMC Medical Imaging},
  2015.

\bibitem{shrec19}
E.~{Moscoso~Thompson}, A.~Gerasimos, K.~Moustakas, E.~R. Nguyen, M.~Tran,
  T.~Lejemble, L.~Barthe, N.~Mellado, C.~Romanengo, S.~Biasotti, and
  B.~Falcidieno, ``{SHREC'19 track: Feature Curve Extraction on Triangle
  Meshes},'' in {\em Eurographics Workshop on 3D Object Retrieval}
  (S.~Biasotti, G.~Lavoué, B.~Falcidieno, and I.~Pratikakis, eds.), 2019.

\bibitem{Canny:1986}
J.~Canny, ``A computational approach to edge detection,'' {\em IEEE
  Transactions on Pattern Analysis and Machine Intelligence}, vol.~8,
  pp.~679--698, June 1986.

\bibitem{vassos1991}
V.~Karageorghis and J.~Karageorghis, {\em The coroplastic art of ancient
  Cyprus}.
\newblock Nicosia : A.G. Leventis Foundation, 1991.

\bibitem{Torrente2018}
M.-L. Torrente, S.~Biasotti, and B.~Falcidieno, ``Recognition of feature curves
  on 3{D} shapes using an algebraic approach to {Hough} transforms,'' {\em
  Pattern Recognition}, vol.~73, pp.~111 -- 130, 2018.

\bibitem{iQmulus}
``{iQmulus: A High-volume Fusion and Analysis Platform for Geospatial Point
  Clouds, Coverages and Volumetric Data Sets}.'' http://iqmulus.eu/,
  2012--2016.

\bibitem{PATANE_2017}
G.~Patan\'{e}, A.~Cerri, V.~Skytt, S.~Pittaluga, S.~Biasotti, D.~Sobrero,
  T.~Dokken, and M.~Spagnuolo, ``Comparing methods for the approximation of
  rainfall fields in environmental applications,'' {\em ISPRS J. Photogramm.
  and Remote Sensing}, vol.~127, pp.~57 -- 72, 2017.

\bibitem{Ohtake:2003}
Y.~Ohtake, A.~Belyaev, M.~Alexa, G.~Turk, and H.-P. Seidel, ``Multi-level
  partition of unity implicits,'' {\em ACM Transactions on Graphics}, vol.~22,
  pp.~463--470, July 2003.

\bibitem{SORGENTE2018}
T.~Sorgente, S.~Biasotti, M.~Livesu, and M.~Spagnuolo, ``Topology-driven shape
  chartification,'' {\em Computer Aided Geometric Design}, vol.~65, pp.~13 --
  28, 2018.

\bibitem{VISIONAIR}
``{The Shape Repository}.''
  http://visionair.ge.imati.cnr.it/ontologies/shapes/, 2011--2015.

\bibitem{LeeMBA}
S.~{Lee}, G.~{Wolberg}, and S.~Y. {Shin}, ``Scattered data interpolation with
  multilevel {B}splines,'' {\em IEEE Transactions on Visualization and Computer
  Graphics}, vol.~3, pp.~228--244, July 1997.

\bibitem{Carr:2001}
J.~C. Carr, R.~K. Beatson, J.~B. Cherrie, T.~J. Mitchell, W.~R. Fright, B.~C.
  McCallum, and T.~R. Evans, ``Reconstruction and representation of {3D}
  objects with {Radial Basis Functions},'' in {\em proceedings of SIGGRAPH
  '01}, (New York, NY, USA), pp.~67--76, ACM, 2001.

\bibitem{Lenarduzzi1993}
L.~Lenarduzzi, ``Practical selection of neighbourhoods for local regression in
  the bivariate case,'' {\em Numerical Algorithms}, vol.~5, pp.~205--213, Apr
  1993.

\bibitem{Johannessen2015}
K.~A. Johannessen, F.~Remonato, and T.~Kvamsdal, ``On the similarities and
  differences between {Classical Hierarchical, Truncated Hierarchical and LR
  B}-splines,'' {\em Computer Methods in Applied Mechanics and Engineering},
  vol.~291, pp.~64 -- 101, 2015.

\bibitem{Johannessen2014}
K.~A. Johannessen, T.~Kvamsdal, and T.~Dokken, ``Isogeometric analysis using
  {LR B}-splines,'' {\em Computer Methods in Applied Mechanics and
  Engineering}, vol.~269, pp.~471--514, 2014.

\bibitem{Occelli2019}
M.~Occelli, T.~Elguedj, S.~Bouabdallah, and L.~Moran\c{c}ay, ``{LR B-Splines
  implementation in the Altair RadiossTM solver for explicit dynamics
  IsoGeometric Analysis},'' {\em Advances in Engineering Software}, 3 2019.

\end{thebibliography}
\appendix

\newpage
\section{Univariate case}
\label{Appendix}

We will suppose -- up to a rotation -- that the point cloud $\mathcal{P}$ can be locally represented by a function of the form $f:[a,b]\subset\mathbb{R}\to\mathbb{R}$.
\begin{definition}
Let $\mathcal{P}\subset\mathbb{R}^2$ be a point cloud, $p\in\mathbb{N}^\ast$ and $\mathbf{x}=[x_1,\ldots,x_{n+p+1}]$ a $(p+1)$-regular (global) knot vector with fixed boundary knots $x_{p+1}=a$ and $x_{n+1}=b$. The \emph{Weighted Quasi Interpolant Spline Approximation} of degree $p$ to the point cloud $\mathcal{P}$ over the knot vector $\mathbf{x}$ is defined by
\begin{equation}
\label{equation:wQISA_1D}
f_w(x):=\sum_{i=1}^n\hat{y}_w(\xi^{(i)})B[\mathbf{x}^{(i)}](x),
\end{equation}
where $\xi^{(i)}:=(x_i+\ldots+x_{i+p})/p$ are the \emph{knot averages} and 
\[
\hat{y}_w(t):=\dfrac{\sum\limits_{(x,y)\in\mathcal{P}}y\cdot w_t(x)}{\sum\limits_{(x,y)\in\mathcal{P}}w_t(x)}
\]
are the \emph{control points estimators} of \emph{weight functions} $w_t:\mathbb{R}\to[0,+\infty)$.
\end{definition}

\subsection{Properties}

\subsubsection{Global and local bounds}
\begin{proposition}[Global bounds]
\label{proposition:global_bounds_1D}
Let $\mathcal{P}\subset\mathbb{R}^2$ be a point cloud. Given $y_{\text{min}}, y_{\text{max}}\in\mathbb{R}$  that satisfy
\[
y_{\text{min}}\le y\le y_{\text{max}}, \quad \text{ for all } (x,y)\in\mathcal{P},
\]
then the weighted quasi interpolant spline approximation to $\mathcal{P}$ from some spline space $\mathbb{S}_{p,\mathbf{x}}$ and some weight function $w$ has the same bounds
\[
y_{min}\le f_w(x)\le y_{max}, \quad \text{ for all } x\in\mathbb{R}.
\]
\end{proposition}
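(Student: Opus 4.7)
The plan is to recycle, verbatim, the two-step convex-combination argument used to prove the bivariate global bound in Proposition~\ref{proposition:gb_multi}. The univariate setting actually makes the notation lighter while the logical skeleton stays the same: a weighted spline approximation is bounded by its control coefficients because of partition of unity, and the control coefficients are themselves bounded by the data because of their defining normalisation.

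First I would bound the control-point estimators. By the explicit formula $\hat{y}_w(\xi^{(i)})=\frac{\sum_{(x,y)\in\mathcal{P}} y\, w_{\xi^{(i)}}(x)}{\sum_{(x,y)\in\mathcal{P}} w_{\xi^{(i)}}(x)}$, each $\hat{y}_w(\xi^{(i)})$ is a convex combination of the ordinates of $\mathcal{P}$, with non-negative weights $w_{\xi^{(i)}}(x)/\bigl(\sum_{(x',y')\in\mathcal{P}} w_{\xi^{(i)}}(x')\bigr)$ that sum to $1$. From $y_{\min}\le y\le y_{\max}$ for all $(x,y)\in\mathcal{P}$ it then follows immediately that $y_{\min}\le \hat{y}_w(\xi^{(i)})\le y_{\max}$ for every $i=1,\ldots,n$.

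Second, I would plug these bounds into the definition~\eqref{equation:wQISA_1D}. The $(p+1)$-regularity of $\mathbf{x}$, together with the standard B-spline properties recalled in Section~\ref{TPBSplines}, yields a non-negative partition of unity $\{B[\mathbf{x}^{(i)}](x)\}_{i=1}^n$ on $[a,b]$. Hence, for every $x\in[a,b]$, $f_w(x)$ is a convex combination of quantities all lying in $[y_{\min},y_{\max}]$, which gives the claimed inequalities on the parameter interval; outside $[a,b]$ all basis functions vanish, so the statement is to be read over the effective domain exactly as in Proposition~\ref{proposition:gb_multi}.

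There is no real obstacle: the only point to watch is the well-definedness of $\hat{y}_w(\xi^{(i)})$, which requires that the denominator $\sum_{(x,y)\in\mathcal{P}} w_{\xi^{(i)}}(x)$ be positive. This is the standing assumption already used throughout Section~\ref{numerical_formulation} (guaranteed by the $0/0=0$ convention together with the mild requirement that each Parzen window $w_{\xi^{(i)}}$ contains at least one point of $\mathcal{P}$ in its support), and under it the proof goes through exactly as sketched.
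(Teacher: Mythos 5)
Your proposal is correct and follows essentially the same two-step argument as the paper's own proof: the control-point estimators are convex combinations of the data ordinates (hence lie in $[y_{\min},y_{\max}]$), and $f_w$ is then a convex combination of those estimators by the non-negativity and partition-of-unity of the B-spline basis. The extra remarks on the positivity of the denominator and on the effective domain are sensible but do not change the route.
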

\begin{proof}
From the partition of unity property of a B-spline basis, it follows that
\begin{equation}
\label{equation:global_bounds_1D}
\begin{array}{@{}c@{\;}c@{\;}c@{\;}c@{\;}c@{}}
\min_i\hat{y}_w(\xi^{(i)}) & \le & \sum_{i=1}^n\hat{y}_w(\xi^{(i)})B[\mathbf{x}^{(i)}](x) & \le & \max_i\hat{y}_w(\xi^{(i)}) \\
\vge\,\scriptsize{\circled{1}} && \vdequal  && \vle\,\scriptsize{\circled{2}} \\
y_{min} &&f_w(x) && y_{max}
\end{array}
\end{equation}
where the inequalities $\circled{1}$ and $\circled{2}$ are a direct consequence of defining $\hat{y}_w$ by means of a convex combination.
\end{proof}
The bounds of Proposition \ref{proposition:global_bounds_1D} can potentially lead to local bounds. We discuss this situation in Corollary \ref{corollary:local_bounds_1D}.

\begin{corollary}[Local bounds]
\label{corollary:local_bounds_1D}
Let $\mathcal{P}\subset\mathbb{R}^2$ be a point cloud. If $x\in[x_\mu,x_{\mu+1})$ for some $\mu$ in the range $p+1\le\mu\le n$, then
\[
\alpha(\mu)\le f_w(x)\le \beta(\mu),
\]
for some $\alpha(\mu),\beta(\mu)$ which belong to $ [y_{min},y_{max}]$.
\end{corollary}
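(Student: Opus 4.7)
The plan is to mirror exactly the proof of Corollary \ref{corollary:lb_multi}, but in the univariate setting where the notation is substantially simpler. The key ingredient is the local support property of B-splines: for a $(p+1)$-regular knot vector, $B[\mathbf{x}^{(i)}]$ is supported on $[x_i,x_{i+p+1}]$, so at any point $x\in[x_\mu,x_{\mu+1})$ only the $p+1$ basis functions with index $i=\mu-p,\ldots,\mu$ are non-zero (the range $p+1\le\mu\le n$ guarantees that these indices are valid).

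First I would rewrite Equation \eqref{equation:wQISA_1D} as the truncated sum
\[
f_w(x)=\sum_{i=\mu-p}^{\mu}\hat{y}_w(\xi^{(i)})\,B[\mathbf{x}^{(i)}](x),
\]
and then invoke the partition of unity of these active B-splines on $[x_\mu,x_{\mu+1})$ to obtain a bracketing by $\min$ and $\max$ of the active control point estimators $\hat{y}_w(\xi^{(i)})$, exactly as in the chain of inequalities of Equation \eqref{equation:local_bounds} in the bivariate proof.

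Next I would bound each active $\hat{y}_w(\xi^{(i)})$ by the minimum and maximum of the $y$-coordinates over the effective sample $\mathcal{P}_\mu:=\bigcup_{i=\mu-p}^{\mu}\operatorname{supp}(w_{\xi^{(i)}})\cap\mathcal{P}$, using the fact that $\hat{y}_w$ is itself a convex combination of the observed $y$-values. Defining $\alpha(\mu):=\min\{y:(x,y)\in\mathcal{P}_\mu\}$ and $\beta(\mu):=\max\{y:(x,y)\in\mathcal{P}_\mu\}$ yields the desired bounds, and membership of $\alpha(\mu),\beta(\mu)$ in $[y_{\min},y_{\max}]$ is immediate from Proposition \ref{proposition:global_bounds_1D} (or just from the assumption $y_{\min}\le y\le y_{\max}$ on $\mathcal{P}$).

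I do not anticipate any real obstacle: the argument is essentially a specialization of the already-given bivariate proof. The only mild care point is book-keeping the index range to ensure $\mu-p\ge 1$, which is exactly what the hypothesis $\mu\ge p+1$ provides; the upper bound $\mu\le n$ ensures the highest active basis function $B[\mathbf{x}^{(\mu)}]$ is well-defined in the spline space.
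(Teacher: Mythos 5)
Your proposal is correct and follows essentially the same route as the paper's own proof: restrict the sum to the active indices $i=\mu-p,\ldots,\mu$ via local support, bracket $f_w(x)$ by the extreme active control point estimators via partition of unity, and then bound those by the extreme $y$-values over the effective sample $\bigcup_{i=\mu-p}^{\mu}\operatorname{supp}(w_{\xi^{(i)}})\cap\mathcal{P}$ using the convex-combination structure of $\hat{y}_w$. Your definition of $\mathcal{P}_\mu$ is in fact slightly cleaner than the paper's notation for the analogous set, but the argument is the same.
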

\begin{proof}
By using the property of local support for B-splines, it follows that
\[
f_w(x)=\sum_{i=\mu-p}^\mu\hat{y}_w(\xi^{(i)})B[\mathbf{x}^{(i)}](x)
\]
over $[x_\mu,x_{\mu+1})$. Thus, we can re-write the chain of inequalities \eqref{equation:global_bounds_1D} as
\begin{equation}
\label{equation:local_bounds_1D}
\begin{array}{@{}c@{\;}c@{\;}c@{\;}c@{\;}c@{}}
\min\limits_{i=\mu-p,\dots,\mu}\hat{y}_w(\xi^{(i)}) & \le & f_w(x) & \le & \max\limits_{i=\mu-p,\dots,\mu}\hat{y}_w(\xi^{(i)}) \\
\vge\,\scriptsize{\circled{3}} &&   && \vle\,\scriptsize{\circled{4}} \\
\min\Big\{y \text{ s.t. }(x,y)\in{\textstyle\bigcup\limits_{i=\mu-p}^\mu}\mathcal{P}_i\Big\} &&  && \max\Big\{y \text{ s.t. }(x,y)\in{\textstyle\bigcup\limits_{i=\mu-p}^\mu}\mathcal{P}_i\Big\}
\\
\vdequal &&   && \vdequal \\
\alpha(\mu) && && \beta(\mu)
\end{array}
\end{equation}
where 
\[\mathcal{P}_i:={\textstyle\bigcup\limits_{i=\mu-p,\ldots,\mu}}
\left\{\text{supp}\left(w_{\xi^{(i)}}(\cdot)\right)
\right\}\cap\mathcal{P}.\]
\end{proof}
Notice that the set of points which are effectively used to compute the approximation, i.e., 
\[
\mathcal{P}^\ast:=\bigcup\limits_{i=p+1,\ldots,n}\mathcal{P}_i
\]
may be a proper subset of $\mathcal{P}$.

\subsubsection{Preservation of monotonicity}

\begin{definition}[$w$-monotonicity]
Let $w_t:\mathbb{R}\to[0,+\infty)$ be a family of weight functions, where $t\in\mathbb{R}$. A point cloud $\mathcal{P}\subset\mathbb{R}^2$ is said to be $w$-\emph{increasing} if for all $x_1\le x_2$, $\hat{y}_w(x_1)\le\hat{y}_w(x_2)$. $\mathcal{P}$ is said to be $w$-\emph{decreasing} if for all $x_1\le x_2$, $\hat{y}_w(x_1)\ge\hat{y}_w(x_2)$.
\end{definition}

\begin{figure}[htp]
\begin{center}
\begin{tabular}{cc}
\includegraphics[scale=0.38]{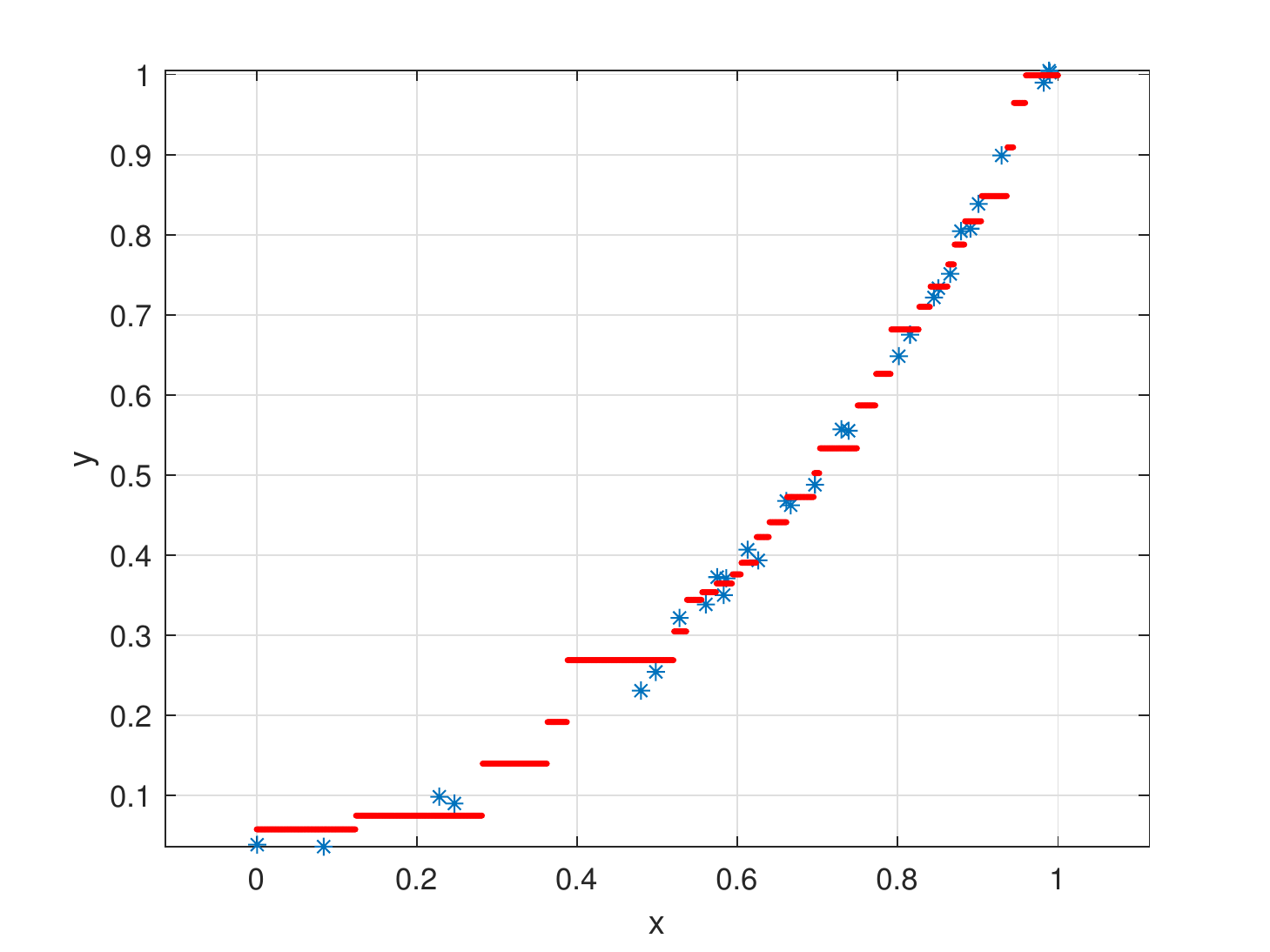} 
&
\includegraphics[scale=0.38]{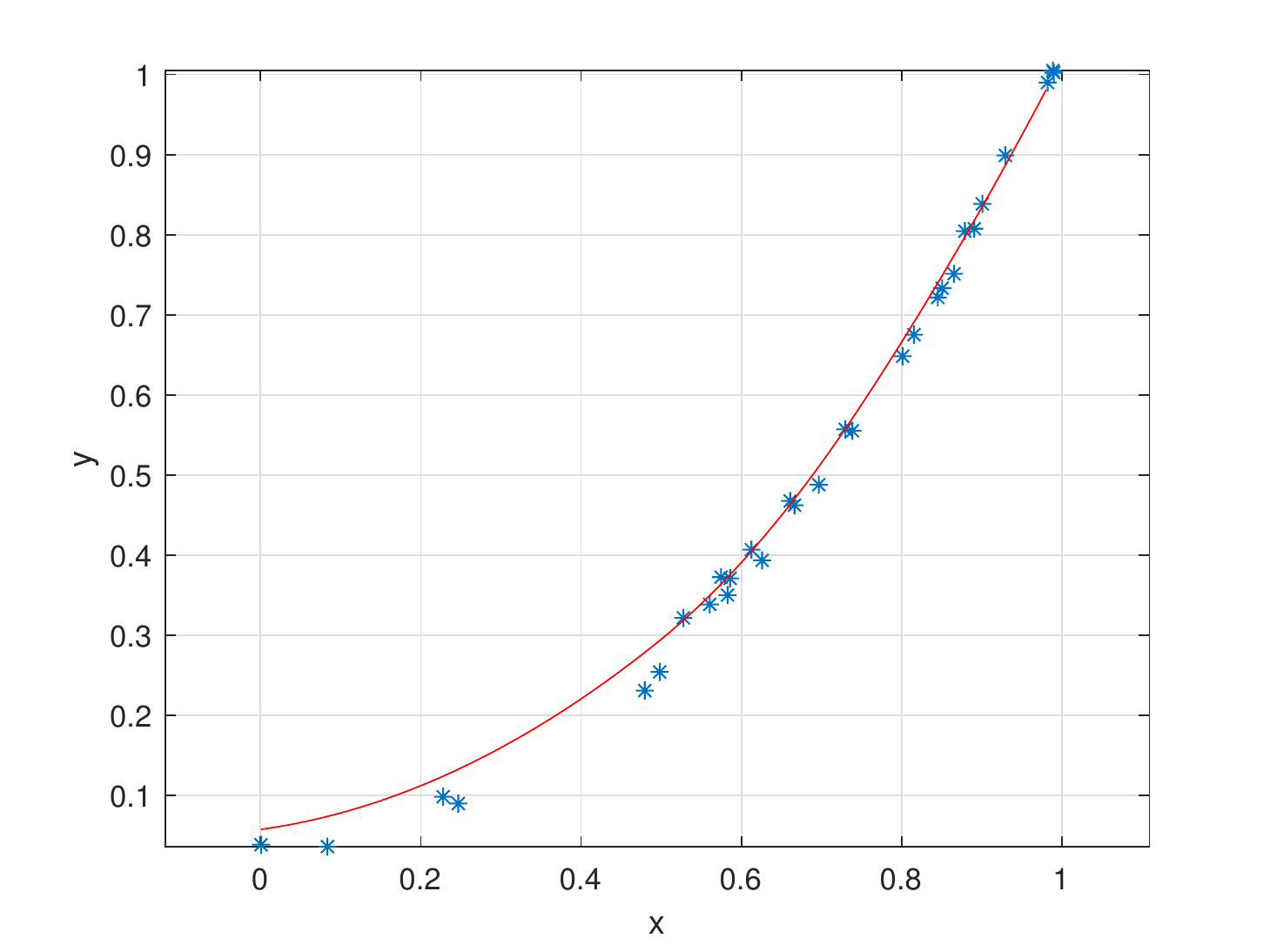} \\
(a) & (b) 
\end{tabular}
\caption{$w$-monotonicity and its preservation. Figure (a) shows an example of an estimator $\hat{y}_w:\mathbb{R}\to\mathbb{R}$ (in red) for a given point cloud (in blue) with respect to a 3-NN weight function. Figure (b) graphically compares the original point cloud (in blue) to its wQISA (in red).}
\label{figure:monotonicity}
\end{center}
\end{figure}

The key ingredient to prove the preservation of monotonicity through our method is the following lemma.
\begin{lemma}
\label{lemma:monot_coeff_1D}
Let  $p\in\mathbb{N}^\ast$ and $\mathbf{x}=[x_1,\ldots,x_{n+p+1}]$ be a $(p+1)$-regular (global) knot vector with fixed boundary knots $x_{p+1}=a$ and $x_{n+1}=b$. In addition, let $f=\sum_{i=1}^nc_iB[\mathbf{x}^{(i)}]\in\mathbb{S}_{p,\mathbf{x}}$. If the sequence of coefficients $\{c_i\}_{i=1}^n$ is increasing (decreasing) then $f$ is increasing (decreasing).
\end{lemma}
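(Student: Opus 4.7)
The plan is to prove monotonicity by showing that the derivative $f'$ has a constant sign on the interior of the domain, using the classical differentiation formula for splines in B-spline form.

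First, I would invoke the well-known derivative formula
\[
f'(x)=\sum_{i=2}^{n}\frac{p(c_i-c_{i-1})}{x_{i+p}-x_i}\,B[\mathbf{x}_i^{(p-1)}](x),
\]
where $B[\mathbf{x}_i^{(p-1)}]$ denotes the B-spline of degree $p-1$ built on the knots $[x_i,\ldots,x_{i+p}]$, and where the convention $0/0=0$ (already invoked in the definition of B-splines in the main text) is used if ever $x_{i+p}=x_i$. This formula is a direct consequence of the recurrence in Equation~\eqref{eqn:UnivariateBspline} and can be obtained by differentiating the recurrence term by term.

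Next, I would check that all ingredients in the sum have the correct sign. By the $(p+1)$-regularity of $\mathbf{x}$ (in particular condition~3 in Definition~\ref{definition:regularknotvector} together with the boundary multiplicity condition), we have $x_{i+p}\ge x_i$ for every $i=2,\ldots,n$, with strict inequality whenever the corresponding B-spline is nonzero. If the sequence $\{c_i\}_{i=1}^n$ is increasing, then $c_i-c_{i-1}\ge 0$ for all $i=2,\ldots,n$; combined with the non-negativity of degree-$(p-1)$ B-splines, each summand in the displayed formula is non-negative. Hence $f'(x)\ge 0$ wherever $f$ is differentiable, i.e., on the complement of the knot set. The decreasing case is symmetric.

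Finally, I would promote ``$f'\ge 0$ almost everywhere'' to ``$f$ is increasing''. Since $f$ is a piecewise polynomial that is continuous on each maximal interval of smoothness and right-continuous at breakpoints (by the convention in the definition of $B[t_i,t_{i+1}]$), monotonicity of $f$ on each smoothness piece follows from the fundamental theorem of calculus, and matching of one-sided limits at interior knots (where continuity holds as long as multiplicity is at most $p$) glues these pieces into a globally monotone function. The only delicate point is the case of a knot of multiplicity exactly $p+1$ in the interior, where $f$ may jump; here one verifies directly from the convex-combination structure of the Cox--de Boor recurrence that the jump itself is non-negative when the coefficients are increasing, which preserves global monotonicity. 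I expect this boundary/multiplicity bookkeeping to be the only nontrivial step; the rest reduces to the standard derivative identity and sign tracking.
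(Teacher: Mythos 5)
Your proof is correct: the paper itself gives no argument for this lemma, deferring entirely to \cite{Lyche2011} (pp.~114--115), and the derivative-formula-plus-sign-tracking argument you give is precisely the standard proof found in that reference. The only points needing care --- the vanishing of boundary terms in the differentiation formula for a $(p+1)$-regular knot vector, and the non-negative jump $c_{\mu+1}-c_{\mu}$ at an interior knot of multiplicity $p+1$ --- are ones you already flag and handle correctly.
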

\begin{proof}
The Lemma is proven in \cite{Lyche2011}, pp. 114--115.
\end{proof}
\begin{proposition}
\label{proposition:uni_pres_mon}
Let $\mathcal{P}\subset\mathbb{R}^2$ be a point cloud, $p\in\mathbb{N}^\ast$ and $\mathbf{x}=[x_1,\ldots,x_{n+p+1}]$ be a $(p+1)$-regular (global) knot vector with fixed boundary knots $x_{p+1}=a$ and $x_{n+1}=b$. If $\mathcal{P}$ is $w$-increasing (decreasing) then $f_w$ is also increasing (decreasing).
\end{proposition}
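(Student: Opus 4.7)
The plan is to combine the $w$-monotonicity hypothesis with the monotonicity of the knot averages, and then invoke Lemma \ref{lemma:monot_coeff_1D} to conclude.

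First I would note that, by definition of the wQISA, the spline $f_w$ has coefficients $c_i := \hat{y}_w(\xi^{(i)})$ in the B-spline basis $\{B[\mathbf{x}^{(i)}]\}_{i=1}^n$. So to apply Lemma \ref{lemma:monot_coeff_1D} it suffices to show that the sequence $\{c_i\}_{i=1}^n$ is increasing (resp.\ decreasing) whenever $\mathcal{P}$ is $w$-increasing (resp.\ $w$-decreasing).

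Next I would observe that the knot averages $\xi^{(i)} = (x_{i+1}+\cdots+x_{i+p})/p$ are themselves non-decreasing in $i$: indeed,
\[
\xi^{(i+1)} - \xi^{(i)} = \frac{x_{i+p+1}-x_{i+1}}{p} \ge 0,
\]
because $\mathbf{x}$ is non-decreasing. Hence $\xi^{(1)}\le \xi^{(2)}\le\cdots\le\xi^{(n)}$.

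Assuming $\mathcal{P}$ is $w$-increasing, by definition the estimator $\hat{y}_w$ is a non-decreasing function, so $\hat{y}_w(\xi^{(i)})\le \hat{y}_w(\xi^{(i+1)})$ for all $i$; equivalently $c_i\le c_{i+1}$. By Lemma \ref{lemma:monot_coeff_1D}, $f_w$ is increasing. The decreasing case is entirely symmetric.

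There is no real obstacle here: the heavy lifting is already done by Lemma \ref{lemma:monot_coeff_1D}, which converts coefficient monotonicity into spline monotonicity, and by the built-in monotonicity of knot averages. The only thing to be careful about is that the notion of $w$-monotonicity is phrased as a condition on $\hat{y}_w$ at \emph{all} real points, but we only need its restriction to the nodes $\{\xi^{(i)}\}_{i=1}^n$, which is exactly what the proof uses (matching the remark made in the bivariate shape-preservation discussion).
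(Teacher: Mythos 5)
Your proposal is correct and follows essentially the same route as the paper: identify the B-spline coefficients of $f_w$ with $\hat{y}_w(\xi^{(i)})$, use $w$-monotonicity (together with the monotonicity of the knot averages, which the paper leaves implicit) to get a monotone coefficient sequence, and conclude via Lemma \ref{lemma:monot_coeff_1D}. The extra step verifying $\xi^{(i)}\le\xi^{(i+1)}$ is a welcome detail but does not change the argument.
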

\begin{proof}
By definition of $w$-increasing (decreasing) point cloud, the sequence of control points $\{\hat{y}_w(\xi^{(i)})\}_{i=1}^n$ is increasing (decreasing). By Lemma \ref{lemma:monot_coeff_1D}, this is sufficient to conclude that $f_w$ is increasing (decreasing).
\end{proof}

\subsubsection{Preservation of convexity}
\begin{definition}[$w$-convexity]
Let $w_t:\mathbb{R}\to[0,+\infty)$ be a family of weight functions, where $t\in\mathbb{R}$. A point cloud $\mathcal{P}\subset\mathbb{R}^2$ is said to be $w$-\emph{convex} if for all $x_1\le x_2$ and for any $\lambda\in[0,1]$, \[\hat{y}_w((1-\lambda)x_1+\lambda x_2)\le(1-\lambda)\hat{y}_w(x_1)+\lambda\hat{y}_w(x_2).\] $\mathcal{P}$ is said to be $w$-\emph{concave} if $\mathcal{P}_{-}:=\{(x,-y) | (x,y)\in\mathcal{P}\}$ is $w$-convex.
\end{definition}
\begin{figure}[htp]
\begin{center}
\includegraphics[scale=0.38]{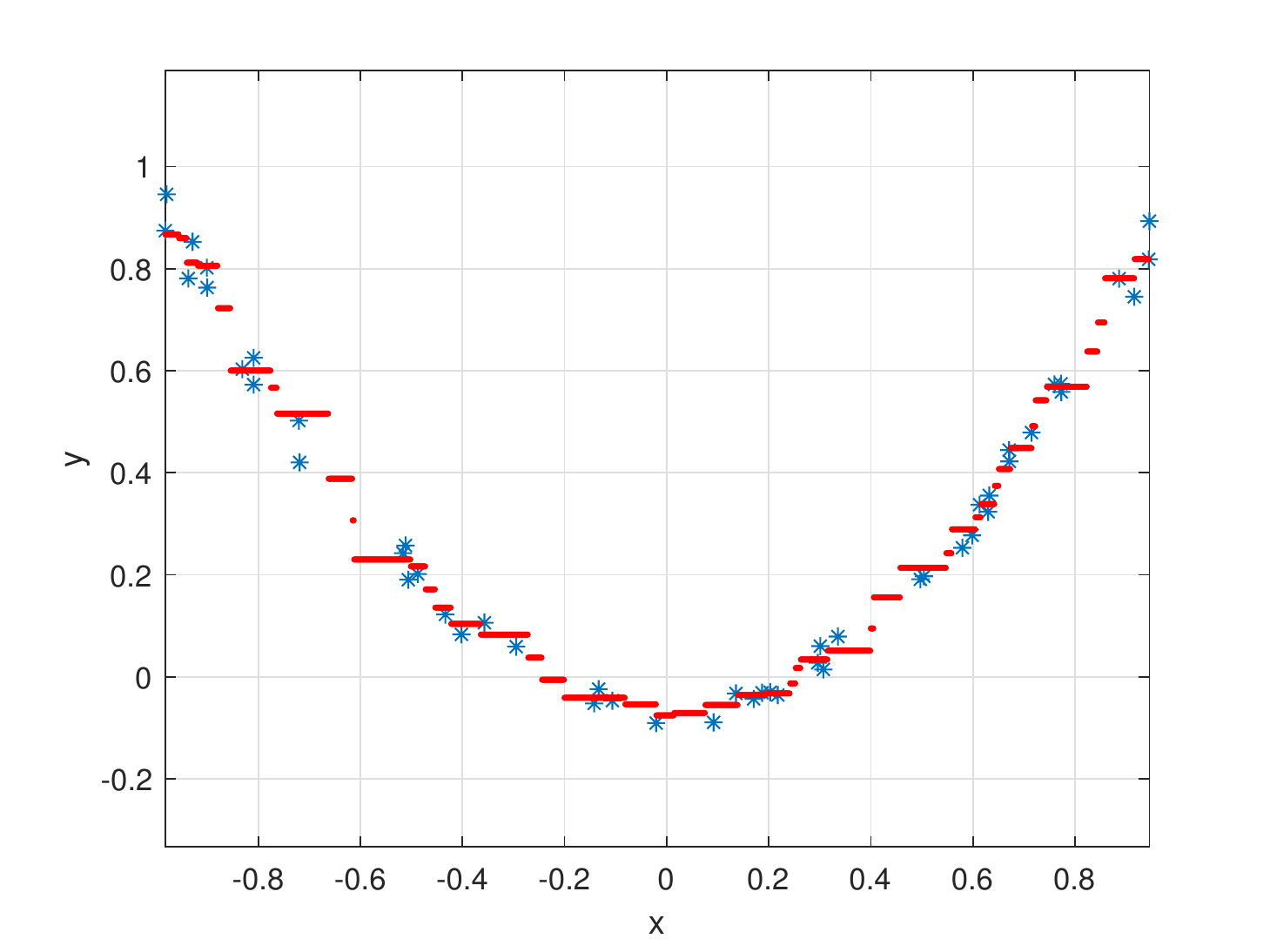}
\includegraphics[scale=0.38]{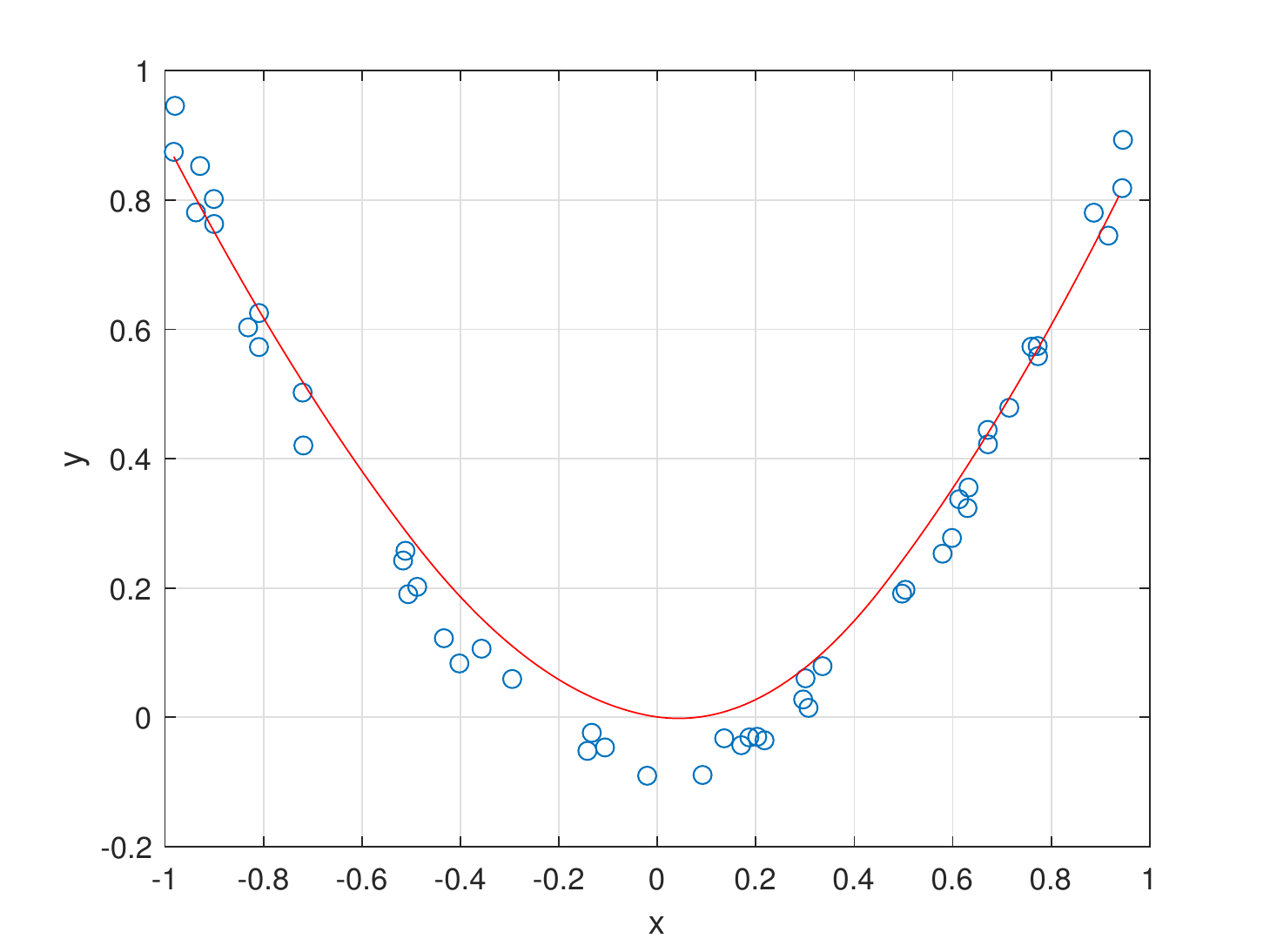}
\caption{$w$-convexity and its preservation. Figure (a) shows an example of an estimator $\hat{y}_w:\mathbb{R}\to\mathbb{R}$ (in red) for a given point cloud (in blue) with respect to a 3-NN weight function. Figure (b) graphically compares the original point cloud (in blue) to its wQISA (in red).}
\label{figure:convexity}
\end{center}
\end{figure}
The preservation of convexity is a consequence of the following lemma.
\begin{lemma}
\label{lemma:convex_coeff_1D}
Let  $p\in\mathbb{N}^\ast$ and $\mathbf{x}=[x_1,\ldots,x_{n+p+1}]$ be a $(p+1)$-regular (global) knot vector with fixed boundary knots $x_{p+1}=a$ and $x_{n+1}=b$. Lastly, let $f=\sum_{i=1}^nc_iB[\mathbf{x}^{(i)}]\in\mathbb{S}_{p,\mathbf{x}}$. Define $\Delta c_i$ by
\[
\Delta c_i:=
\begin{cases}
\dfrac{c_i-c_{i-1}}{x_{i+p}-x_i}, 	&\text{ if }x_i<x_{i+p} \\
\Delta c_{i-1}					&\text{ if }x_i=x_{i+p}
\end{cases}
\]
for $i=2,\ldots,n$. Then $f$ is convex on $[x_{p+1},x_{n+1}]$ if it is continuous and if the sequence $\{\Delta c_i\}_{i=2}^n$ is increasing.
\end{lemma}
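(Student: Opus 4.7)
The plan is to reduce convexity of $f$ to monotonicity of its derivative, and then invoke the previous lemma (Lemma~\ref{lemma:monot_coeff_1D}) to get that monotonicity from the hypothesis on $\{\Delta c_i\}_{i=2}^n$. The key tool is the well-known B-spline differentiation formula
\[
f'(x)=\sum_{i=2}^{n} p\,\Delta c_i\, B[\mathbf{x}^{(i),-}](x),
\]
where $B[\mathbf{x}^{(i),-}]$ denotes the B-spline of degree $p-1$ built on the local knot vector $[x_i,\ldots,x_{i+p}]$, and $\Delta c_i$ is exactly the divided difference defined in the statement (with the tie-breaking rule $\Delta c_i:=\Delta c_{i-1}$ on coincident knots, which is consistent with the convention ``$0/0=0$''). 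This identity is standard for $(p+1)$-regular knot vectors and reduces a derivative of a degree-$p$ spline to a degree-$(p-1)$ spline expansion.

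First, I would fix the differentiation formula above on each open interval $(x_\mu,x_{\mu+1})\subset[x_{p+1},x_{n+1}]$ where the spline of degree $p-1$ is genuinely well-defined (i.e., where consecutive knots are distinct). Next, by hypothesis the sequence $\{\Delta c_i\}_{i=2}^{n}$ is increasing, hence so is $\{p\,\Delta c_i\}_{i=2}^n$. Applying Lemma~\ref{lemma:monot_coeff_1D} to the degree-$(p-1)$ spline $f'$ on the refined/regular knot vector inherited from $\mathbf{x}$, I conclude that $f'$ is increasing on the interior of $[x_{p+1},x_{n+1}]$.

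The final step is to pass from ``$f'$ increasing on the interior'' plus ``$f$ continuous on $[x_{p+1},x_{n+1}]$'' to ``$f$ convex on $[x_{p+1},x_{n+1}]$''. This is a standard real-analysis fact: a continuous function whose derivative exists and is monotone nondecreasing on the complement of a finite set (in our case, the set of knots of multiplicity $\ge p$, where $f'$ may fail to exist) is convex. To handle the exceptional knots cleanly, I would use left and right derivatives $f'_-$, $f'_+$: the differentiation formula still gives well-defined one-sided derivatives at each knot and, with the convention $\Delta c_i:=\Delta c_{i-1}$ for coincident knots, the sequence of coefficients feeding into $f'_\pm$ is still monotone. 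Combined with continuity this forces the slope of any secant to be nondecreasing in its endpoints, which is the definition of convexity.

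The main obstacle is not the smooth interior case, which is immediate from Lemma~\ref{lemma:monot_coeff_1D}, but rather the correct handling of knot multiplicities: when several knots coincide, the divided difference is defined by the convention $\Delta c_i=\Delta c_{i-1}$ and $f'$ may have jumps. Care is needed to verify that the jumps of $f'$ at multiple knots go in the right direction (up, not down) so that the one-sided derivatives remain a nondecreasing function of $x$; this is exactly why the continuity hypothesis on $f$ is explicitly assumed, since convexity in the presence of multiplicity $p$ (where $f$ itself could potentially be discontinuous without that hypothesis) is not purely a consequence of $\{\Delta c_i\}$ being increasing.
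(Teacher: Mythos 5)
Your proposal is correct and is essentially the argument the paper delegates to its reference (the proof in the text is just ``See \cite{Lyche2011}, p.~118''): apply the B-spline differentiation formula to write $f'=p\sum_{i=2}^{n}\Delta c_i\,B[\mathbf{x}^{(i),-}]$ as a degree-$(p-1)$ spline, invoke the monotonicity result of Lemma~\ref{lemma:monot_coeff_1D} to get that $f'$ is increasing, and combine with continuity of $f$ to conclude convexity. Your remarks on the role of the convention $\Delta c_i:=\Delta c_{i-1}$ at coincident knots and on why the continuity hypothesis is needed are exactly the points the cited reference also has to address, so nothing is missing.
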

\begin{proof}
See \cite{Lyche2011}, p. 118.
\end{proof}
\begin{proposition}
\label{proposition:uni_pres_conv}
Let $\mathcal{P}\subset\mathbb{R}^2$ be a point cloud, $p\in\mathbb{N}^\ast$ and $\mathbf{x}=[x_1,\ldots,x_{n+p+1}]$ be a $(p+1)$-regular (global) knot vector with fixed boundary knots $x_{p+1}=a$ and $x_{n+1}=b$. If $\mathcal{P}$ is $w$-convex (concave) then $f_w$ is also convex (concave).
\end{proposition}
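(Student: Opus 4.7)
The plan is to reduce the statement to Lemma \ref{lemma:convex_coeff_1D}, exactly as Proposition \ref{proposition:uni_pres_mon} reduces to Lemma \ref{lemma:monot_coeff_1D}. Setting $c_i := \hat{y}_w(\xi^{(i)})$ for $i=1,\dots,n$, I must verify two hypotheses: continuity of $f_w$ on $[x_{p+1},x_{n+1}]$, and that the sequence $\{\Delta c_i\}_{i=2}^n$ is nondecreasing. Continuity is a standard consequence of the $(p+1)$-regularity of $\mathbf{x}$ (every interior knot has multiplicity at most $p$, hence $f_w$ is at least $C^0$, and in fact globally continuous on $[a,b]$). The real work is the monotonicity of $\Delta c_i$.

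First I would exploit the relationship between consecutive knot averages. Since $\xi^{(i)}=(x_{i+1}+\cdots+x_{i+p})/p$, one immediately gets
\[
\xi^{(i)}-\xi^{(i-1)}=\frac{x_{i+p}-x_i}{p}.
\]
Therefore, whenever $x_i<x_{i+p}$, we have $\xi^{(i-1)}<\xi^{(i)}$ and
\[
\Delta c_i=\frac{\hat{y}_w(\xi^{(i)})-\hat{y}_w(\xi^{(i-1)})}{x_{i+p}-x_i}=\frac{1}{p}\cdot\frac{\hat{y}_w(\xi^{(i)})-\hat{y}_w(\xi^{(i-1)})}{\xi^{(i)}-\xi^{(i-1)}},
\]
i.e., $\Delta c_i$ is (up to the factor $1/p$) the first divided difference of $\hat{y}_w$ at the consecutive knot averages. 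By the hypothesis of $w$-convexity, $\hat{y}_w$ is convex on $\mathbb{R}$, and a classical property of convex functions is that their first divided differences are monotonically nondecreasing in each argument. This gives $\Delta c_{i-1}\le\Delta c_i$ whenever both differences are defined by the nondegenerate formula.

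Next I would handle the degenerate case $x_i=x_{i+p}$, in which $\xi^{(i-1)}=\xi^{(i)}$ and the second branch of the definition in Lemma \ref{lemma:convex_coeff_1D} sets $\Delta c_i:=\Delta c_{i-1}$; here monotonicity is trivial. A small bookkeeping check is needed to ensure that this convention is consistent with the nondegenerate formula applied to the surrounding indices, so that the overall sequence $\{\Delta c_i\}$ remains nondecreasing across the transition between the two cases. This follows because the divided-difference inequality $[\xi^{(i-2)},\xi^{(i-1)}]\hat{y}_w\le[\xi^{(i)},\xi^{(i+1)}]\hat{y}_w$ still holds for convex $\hat{y}_w$ whenever $\xi^{(i-2)}\le\xi^{(i-1)}=\xi^{(i)}\le\xi^{(i+1)}$.

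With $\{\Delta c_i\}_{i=2}^n$ nondecreasing and $f_w$ continuous, Lemma \ref{lemma:convex_coeff_1D} yields convexity of $f_w$ on $[x_{p+1},x_{n+1}]=[a,b]$. The concave case is immediate by applying the convex case to the reflected cloud $\mathcal{P}_- = \{(x,-y):(x,y)\in\mathcal{P}\}$, noting that $w$-concavity of $\mathcal{P}$ is by definition $w$-convexity of $\mathcal{P}_-$, and that the control point estimator and the wQISA are both linear in $y$. The main obstacle I expect is not deep but rather notational: cleanly packaging the degenerate-knot case $x_i=x_{i+p}$ so that the divided-difference argument extends uniformly; the underlying analytic fact (convexity $\Leftrightarrow$ nondecreasing divided differences) is standard.
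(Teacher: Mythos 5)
Your proof is correct and follows essentially the same route as the paper: both identify $\Delta c_i$ with (a $1/p$ multiple of) the first divided difference of $\hat{y}_w$ at consecutive knot averages via $\xi^{(i)}-\xi^{(i-1)}=(x_{i+p}-x_i)/p$, invoke the monotonicity of divided differences of a convex function, and conclude by Lemma \ref{lemma:convex_coeff_1D}. Your treatment is in fact more careful than the paper's, which omits the continuity check, the degenerate-knot case, and the reduction of the concave case to the convex one via $\mathcal{P}_-$.
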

\begin{proof}
Let
\[
\Delta c_i:=
\dfrac{\hat{y}_w(\xi^{(i)})-\hat{y}_w(\xi^{(i-1)})}{x_{i+p}-x_i}=\dfrac{\hat{y}_w(\xi^{(i)})-\hat{y}_w(\xi^{(i-1)})}{(\xi^{(i)}-\xi^{(i-1)})p}
\]
with  $x_{i}<x_{i+p}$. Since $\mathcal{P}$ is $w$-convex then these differences must be increasing and consequently $f_w$ is convex by Lemma \ref{lemma:convex_coeff_1D}.
\end{proof}

\end{document}